\definecolor{citation}{rgb}{0.2,0.58,0.2} 
\definecolor{formula}{rgb}{0.1,0.2,0.6}
\definecolor{url}{rgb}{0.3,0,0.5}
\title[Wolff potentials in elliptic problems with  Orlicz growth]{Wolff potentials and local behaviour\\ of solutions to measure data elliptic problems\\ with  Orlicz growth}
\author{Iwona Chlebicka}\address{Iwona Chlebicka \\
Institute of Applied Mathematics and Mechanics, University of Warsaw \\ ul. Banacha 2, 02-097 Warsaw, Poland\\  \texttt{e-mail: i.chlebicka@mimuw.edu.pl}}
\author{Flavia Giannetti} \address{Flavia Giannetti \\
Dipartimento di Matematica e Applicazioni \lq\lq R.
Caccioppoli\rq\rq \\ Universit\`a  degli Studi di Napoli\\
Via Cintia, 80126 Napoli,
Italia\\ 
\texttt{e-mail: giannett@unina.it}}
\author{Anna Zatorska-Goldstein}\address{Anna Zatorska-Goldstein \\ Institute of Applied Mathematics and Mechanics, University of Warsaw \\ ul. Banacha 2, 02-097 Warsaw, Poland \\ \texttt{e-mail: azator@mimuw.edu.pl}}
\date{}
\begin{document}
\maketitle \sloppy

\thispagestyle{empty}

\belowdisplayskip=18pt plus 6pt minus 12pt \abovedisplayskip=18pt
plus 6pt minus 12pt
\parskip 4pt plus 1pt
\parindent 0pt

\newcommand{\barint}{
         \rule[.036in]{.12in}{.009in}\kern-.16in
          \displaystyle\int  } 
\def\R{{\mathbb{R}}}
\def\rp{{[0,\infty)}}
\def\r{{\mathbb{R}}}
\def\n{{\mathbb{N}}}
\def\l{{\mathbf{l}}}
\def\bu{{\bar{u}}}
\def\bg{{\bar{g}}}
\def\bG{{\bar{G}}}
\def\ba{{\bar{a}}}
\def\bv{{\bar{v}}}
\def\bmu{{\bar{\mu}}}
\def\rn{{\mathbb{R}^{n}}}
\def\rN{{\mathbb{R}^{N}}} 

\newtheorem{theo}{\bf Theorem} 
\newtheorem{coro}{\bf Corollary}[section]
\newtheorem{lem}[coro]{\bf Lemma}
\newtheorem{rem}[coro]{\bf Remark} 
\newtheorem{defi}[coro]{\bf Definition} 
\newtheorem{ex}[coro]{\bf Example} 
\newtheorem{fact}[coro]{\bf Fact} 
\newtheorem{prop}[coro]{\bf Proposition}

\newcommand{\dv}{{\rm div}}
\def\aI{\texttt{(a1)}}
\def\aII{\texttt{(a2)}}
\newcommand{\MBm}{{(M_B^-)}}
\newcommand{\MBmj}{{(M_{B_{r_j}}^-)}}
\newcommand{\aBi}{{a^B_{\rm i}}}
\newcommand{\opA}{{\mathcal{ A}}}
\newcommand{\wt}{\widetilde}
\newcommand{\ve}{\varepsilon}
\newcommand{\vp}{\varphi}
\newcommand{\vt}{\vartheta}
\newcommand{\gb}{{g_\bullet}}
\newcommand{\gbn}{{(\gb)_n}}
\newcommand{\vr}{\varrho}
\newcommand{\pa}{\partial}
\newcommand{\cW}{{\mathcal{W}}}
\newcommand{\supp}{{\rm supp}}
\newcommand{\miu}{{\min_{\partial B_k}u}}

\newcommand{\data}{\textit{\texttt{data}}}


\parindent 1em

\begin{abstract}
We establish pointwise estimates expressed in terms of a nonlinear potential of a generalized Wolff type for $\opA$-superharmonic functions with nonlinear operator $\opA:\Omega\times\rn\to\rn$ having measurable dependence on the spacial variable and  Orlicz growth with respect to the last variable.  The result is sharp as the same potential controls bounds from above and from below. Applying it we provide a bunch of precise regularity results including continuity and H\"older continuity for solutions to problems involving measures that satisfies conditions expressed in the natural scales. Finally, we give a variant of Hedberg--Wolff theorem on characterization of the dual of the Orlicz space.
\end{abstract}


\section{Introduction}

Potential estimates are well-established and precise tools in analysis of measure data nonlinear elliptic partial differential equations~\cite{KuMi2014}. Being influenced by~\cite{KiMa92,KiMa94} settling  
the nonlinear potential theory for $p$-superharmonic functions related to the Dirichlet problem $-\Delta_p u=-\dv (|Du|^{p-2}Du)=0$, $1<p<\infty$,
and noticing the special place of Orlicz growth measure data problems in recent nonlinear analysis~\cite{Baroni-Riesz,bemi,Byun3,CO,
IC-pocket,CiMa,DiMa,KuMi2014,Marc2020}, we aim  to study   nonstandard growth version of pointwise estimates involving suitably generalized potential of the Wolff type. The estimates are powerful and have multiple new regularity consequences for local behaviour of very weak solutions to problems involving measure data that satisfy conditions expressed in the relevant scales of generalized Lorentz, Marcinkiewicz, or Morrey type. On the other hand, as an another application of the potential estimates, we provide the proof of Orlicz version of Hedberg--Wolff Theorem yielding full characterization of the  natural dual space to the space of solutions by the means of the  Wolff potential.

Let us stress a few highlights. We treat whole the range of doubling growths within one attempt. In particular, subquadratic and superquadratic cases are included. In turn, we cover the case of $p$-Laplacian, $1<p<\infty$, possibly with measurable coefficients, but unlike the classical studies~\cite{hekima,KiMa92} the operator we consider is {\em not} assumed to enjoy homogeneity of a~form $\opA(x,k\xi)=|k|^{p-2}k\opA(x,\xi)$. Consequently, our class of solutions is {\em not} invariant with respect to scalar multiplication. We do {\em not} follow the scheme of the proof of~\cite{KiMa92}, \cite{Maly-Orlicz} nor~\cite{LuMaMa}. Our main inspirations are~\cite{KoKu,tru-wa} and~\cite{KuMi2014}. The tools of potential analysis used extensively in the proof have been elaborated lately in~\cite{CZG-gOp} for $\opA$-superharmonic problems with more general growth of Musielak-Orlicz type.
 
We investigate $\opA$-superharmonic functions and $\opA$-supersolutions to measure data problem for nonlinear operators $\opA:\Omega\times\rn\to\rn$ having Orlicz growth with respect to the second variable and built upon the model case
 \begin{equation}
\label{intro:eq:main} -{ \dv}\left(a(x)\frac{G(|D u|)}{|D u|^2}  D u \right)=\mu\quad\text{in}\quad \Omega,
\end{equation}
where $ a\in L^\infty(\Omega)$ is separated from zero, $\mu$ is a nonnegative measure, and $G\in\Delta_2\cap\nabla_2$, 
 see {\rm Assumption {\bf (A)}} in Section~\ref{sec:formulation-n-reg-cons}. The subtlety in distiguishing these types of functions is exposed in detail in Sections~\ref{ssec:sols} and~\ref{ssec:A-sh-gen-meas}. Briefly,  $\opA$-superharmonic functions are defined by the Comparison Principle with respect to a family of continuous solutions to $-\dv\opA(x,Du)=0$  for $\opA:\Omega\times\rn\to\rn$, such that $x\mapsto\opA(x,\cdot)$ is merely measurable and bounded, whereas $\xi\mapsto\opA(\cdot,\xi)$ is continuous and governed by a doubling Orlicz function. 
{Observe that we} cover the case of $p$-Laplacian when $G_p(s)=s^{p},$ for every $p>1$, together with operators governed by the Zygmund-type functions $G_{p,\alpha}(s)=s^p\log^\alpha(1+s)$, $p>1,\,\alpha\geq 0$, as well as their multiplications and compositions with various parameters.

  Let us remark that since weak solutions does not have to exist for arbitrary measure datum, we employ a notion of very weak solutions obtained by an approximation. The definition coincides with SOLA (Solutions Obtained as a Limit of Approximation) introduced in~\cite{BG} except for the use of truncations~\eqref{Tk} as in~\cite{bbggpv}, which broadens the class of admissible solutions. See Section~\ref{ssec:sols} for the precise definition of this notion of very weak solutions, \cite{CiMa} for the existence and basic regularity, and~\cite{ACCZG,CGZG,IC-measure-data} for related existence results. Such solutions can be unbounded if the measure concentrates (see Corollary~\ref{coro:potential} and Remark~\ref{rem:p-potential} for examples), though still one can provide pointwise estimates by a~relevant potential from above and below. In our study we include the celebrated case of $p$-superharmonic functions, for which Wolff-potential estimates were provided by Kilpel\"ainen and Mal\'y in~\cite{KiMa92,KiMa94} and reproved in~\cite{KoKu}. In fact, it is established that a $p$-superharmonic function $u$  satisfies  {estimates}
\begin{equation}
    \label{wolff-p-est}
\tfrac 1c\cW_p^{\mu_u}(x_0,R)\leq u(x_0)\leq c\Big(\inf_{B(x_0,R)}u+\cW_p^{\mu_u}(x_0,R)\Big)\quad\text{for some }\ c=c(n,p)
\end{equation}with {the} so-called Wolff potential
\[ \cW_p^{\mu_u}(x_0,R)=\int_0^R \left(r^{p-n}{\mu_u(B(x_0,r))}\right)^{\frac{1}{p-1}}\,\frac{dr}{r}= \int_0^R \left(\frac{\mu_u(B(x_0,r))}{r^{n-1}}\right)^{\frac{1}{p-1}}\,dr.\]
In the linear case ($p=2$) estimates~\eqref{wolff-p-est} retrieve the classical Riesz potential bounds. Potentials of this type are investigated since~\cite{HaMa2,Ma} stemming from studies on Wiener criterion for $p$-Laplacian. See~\cite{KuMi2014} for an overview, \cite{HedWol,KiMa92,KiMa94,LiMa} for results of fundamental meaning for the theory,~\cite{adams-hedberg,hekima,BB} for well-present background, and~\cite{PV1,PV2} for further consequences of Wolff potential estimates in the theory of existence. The preeminent role of estimates of a form~\eqref{wolff-p-est} is played in the regularity theory. See \cite{KiMa92,KiMa94} for the classical results embraced and extended in Section~\ref{ssec:loc-beh}.

 The theorem of Hedberg and Wolff proven first in~\cite{HedWol} characterizes the dual to the Orlicz-Sobolev space. In fact, it yields that for a nonnegative measure compactly supported in $\Omega$ it holds that \begin{equation}
     \label{wolff-hed-thm} \text{$\mu\in W^{-1,p'}(\Omega)\ \ $ if and only if }\ \int_\Omega \mathcal{W}^\mu_p(x,R)\,d\mu(x)<\infty\ \text{ for some }\ R>0.
 \end{equation}

Since our goal is to provide estimates of a type~\eqref{wolff-p-est} and a version of~\eqref{wolff-hed-thm} for a~possibly broad class of degenerate operators, let us give an overview on related results available with relaxed growth. The first generalization of~\cite{KiMa92} to the weighted case has been elaborated in~\cite{Mik}.  Wolff-potential estimates for the problems stated in the variable exponent setting are provided and applied further in regularity theory~\cite{LuMaMa,BaHa,RnW}. Similar estimates in the Orlicz setting  were available before~\cite{Maly-Orlicz,FioP} and used in the theory of nonexistence~\cite{FioGia2,FioP}. Potential estimates imply local regularity of solutions, for a~different approach than the mentioned one see~\cite{Ci-pot}.

The statements of our results can be found in Section~\ref{sec:formulation-n-reg-cons}. Theorem~\ref{theo:est-sol} yields a~result generalizing~\eqref{wolff-p-est}. Namely, for $\opA$-superharmonic functions related to distributional solutions to~\eqref{intro:eq:main}, we get estimates of a form~\eqref{wolff-p-est} with the use of the following version of Wolff potential
\begin{equation}
\label{Wolff-potential}
\mathcal{W}^{\mu_u}_G(x_0,R)=\int_0^R g^{-1}\left(\frac{\mu_u(B(x_0,r))}{r^{n-1}}\right)\,dr,
\end{equation}
where $g(s)=G'(s)$ (where for $p$-Laplace case we have $g(s)= \frac{1}{p}s^{p-1}=(s^p)'$. Further, in Section~\ref{ssec:loc-beh} we present a bunch of regularity consequences of these potential estimates. In particular, we find conditions on the measure datum formulated in the natural scales generalizing Morrey, Lorentz, and Marcinkiewicz scales, which ensure continuity or H\"older continuity of $\opA$-superharmonic functions essentially extending the known ones~\cite{CL,ZFZ}. The results are precise since the conditions are kept fully intrinsic, {namely} at no stage we compare the growth to the polynomial one. Our second accomplishment is Theorem~\ref{theo:W-H} given in Section~\ref{ssec:WH} and yielding the nonstandard growth version of theorem of Hedberg and Wolff. It states that for a nonnegative measure $\mu$ compactly supported in $\Omega$ it holds that \[\text{$\mu\in (W^{1,G}_0(\Omega))'\ \ $ if and only if }\ \int_\Omega \mathcal{W}^\mu_G(x,R)\,d\mu(x)<\infty\quad\text{for some }\ R>0.\] 

Similar result to our upper pointwise bound from Theorem~\ref{theo:est-sol} (generalizing~\eqref{wolff-p-est}) for related variational problem exposing Orlicz of the type
\[w\mapsto{\mathcal{G}}(w)=\int_\Omega a(x)G(|D w|)\,dx\]  growth was provided by Mal\'y~\cite{Maly-Orlicz}. 
Let us stress that not only we prove both lower and upper bounds, but also we infer multiple consequences of our main result. Moreover, we use  completely other arguments than engaged in~\cite{Maly-Orlicz}. We also do not follow the classical way of~\cite{KiMa92}, nor ideas of~\cite{LuMaMa} provided for the variable exponent counterpart of $p$-Laplacian. In fact, for the method of our reasoning the most influential was later proof provided by Korte and Kuusi~\cite{KoKu} based on the ideas by Trudinger and Wang~\cite{tru-wa}. Similar scheme was  used also in~\cite{hara,lu}. This approach requires a basic toolkit of potential analysis provided in~\cite{CZG-gOp}.

Apart from multiple sharp local regularity consequences of Theorem~\ref{theo:est-sol} given in Section~\ref{sec:formulation-n-reg-cons}, this result can be used further to prove so-called Wiener criterion giving necessary and sufficient geometric conditions for boundary points to guarantee continuity up to the boundary whenever boundary values are continuous at that point, see~\cite{KiMa94}. On the other hand, since Wolff potential estimates can be used also to infer gradient regularity for measure data problems~\cite{DuMi',DuMi,KuMi2014,KuMi2018,NP}, it would be worth to study them in the context of Orlicz-growth problems and to compare to the known ones, cf.~\cite{bemi,Baroni-Riesz,Byun1,Byun2,IC-gradest,IC-lower,Yao-Zheng}. Favorable versions of Theorems~\ref{theo:est-sol} and~\ref{theo:W-H} in the generality of~\cite{CZG-gOp} would be also very interesting.

The paper is organized as follows. Our assumptions, main results and their consequences in local behaviour of~$\opA$-superharmonic functions are presented in Section~\ref{sec:formulation-n-reg-cons}. Section~\ref{sec:prelim} is devoted to notation and information on the settings, as well as recalling the basal information on the nonstandard growth potential theory. Section~\ref{sec:mainproof} contains the proof of the potential estimates of Theorem~\ref{theo:est-sol}, whereas Section~\ref{sec:W-H} gives the proof of the Hedberg--Wolff theorem.

\section{The main result and its consequences }\label{sec:formulation-n-reg-cons}

\subsection{The statement of the problem and the main result}
Let us give the details about the problem we study. $\opA$-superharmonic functions are defined in Section~\ref{ssec:sols} by the Comparison Principle with respect to $\opA$-harmonic functions, i.e. continuous solutions to \begin{equation}
    \label{eq:0}
-\dv\opA(x,Du)= 0
\end{equation}
with a vector field satisfying the following conditions.

\subsubsection*{\underline{Assumption {\bf (A)}}}  Given a bounded open set $\Omega\subset\rn$, $n\geq 2$, let us consider a~Carath\'eodory's function $\opA:\Omega\times \rn\to\rn$, which is monotone, that is
\[(\opA(x,\xi)-\opA(x,\eta))\cdot(\xi-\eta)>0\qquad\text{for every } \ \xi\neq\eta.\]
We suppose that $\opA$ satisfies growth and coercivity conditions expressed by the means of a doubling $N$-function $G\in C^1 {((0,\infty))}$ i.e. a nonnegative, increasing, and convex function such that $G\in\Delta_2\cap\nabla_2$. Namely, we assume that
\begin{equation}
\label{ass-op}\begin{cases}
c_1^\opA G(|\xi|)\leq \opA(x,\xi)\cdot\xi,\\
|\opA(x,\xi)|\leq c_2^\opA g(|\xi|),
\end{cases}
\end{equation}
where $g$ is the derivative of $G$ and $c_1^\opA,c_2^\opA>0$ are absolute constants. We collect all parameters of the problem as $ \data=\data(i_G,s_G,c_1^\opA,c_2^\opA). $

Definitions of very weak solutions called `approximable solutions', $\opA$-supersolutions, $\opA$-harmonic functions etc. are given below the introduction to the functional setting, that is in Section~\ref{ssec:sols}. {In Section~\ref{ssec:A-sh-gen-meas} we explain in what sense an $\opA$-superharmonic function generates a measure.}

\subsubsection*{\underline{Wolff potential estimates}} Our main accomplishment is the following pointwise estimate.
\begin{theo}\label{theo:est-sol} 
Suppose a vector field $\opA:\Omega\times\rn\to\rn$ satisfies {\rm Assumption {\bf (A)}} with an $N$-function $G\in\Delta_2\cap\nabla_2$.  There exists $R_W >0$ such that if  $u$ is a nonnegative,  $\opA$-superharmonic function in $B(x_0,{R_\mathcal{W}}) \Subset\Omega$ which is finite a.e., and if $\mu_u$ is generated by $u$,  then for $R\in(0,R_\mathcal{W}/2)$ we have
\begin{flalign}
\label{eq:u-est}  C_L\left(\mathcal{W}^{\mu_u}_G (x_0 , R) - R\right)\leq u(x_0 )&\leq C_U\left( \inf_{B(x_0,R)} u(x)+\mathcal{W}^{\mu_u}_G (x_0 , R)+R\right)
\end{flalign}
with $C_L,C_U>0$ depending only on $\data$ and $n$.
\end{theo}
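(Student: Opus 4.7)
The plan is to adapt the dyadic comparison scheme of Korte--Kuusi~\cite{KoKu} (inspired by Trudinger--Wang~\cite{tru-wa}), working throughout with the sequence $B_j:=B(x_0,r_j)$, $r_j:=2^{-j}R$. Since $u$ is lower semicontinuous, writing $m_j:=\inf_{B_j}u$, the sequence $(m_j)_j$ is nondecreasing and $m_j\to u(x_0)$; hence $u(x_0)-m_0=\sum_{j\geq 0}(m_{j+1}-m_j)$, and the upper bound reduces to estimating each increment by a dyadic Wolff contribution.

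For the upper bound, on each $B_j$ I would solve the Dirichlet problem for the $\opA$-harmonic replacement $v_j\in u+W^{1,G}_0(B_j)$ (existence and uniqueness supplied by the theory recalled in Section~\ref{sec:prelim}, based on~\cite{CZG-gOp}). Comparison yields $v_j\leq u$ in $B_j$ and the minimum principle gives $\inf_{B_j}v_j=\inf_{\partial B_j}u\geq m_j$; monotonicity $v_{j+1}\geq v_j$ on $B_{j+1}$ follows since both are $\opA$-harmonic there with $v_{j+1}=u\geq v_j$ on $\partial B_{j+1}$. The first main step is a comparison estimate: testing $-\dv\bigl(\opA(x,Du)-\opA(x,Dv_j)\bigr)=\mu_u$ with a truncation of $u-v_j\geq 0$, invoking the monotonicity of $\opA$, the growth conditions~\eqref{ass-op}, and a De~Giorgi--Moser iteration adapted to the Orlicz setting (using the Orlicz--Sobolev/Poincar\'e and Caccioppoli tools of~\cite{CZG-gOp}), produces
\[
\sup_{B_{j+1}}(u-v_j)\ \leq\ C\,r_j\,g^{-1}\!\left(\frac{\mu_u(B_j)}{r_j^{n-1}}\right),
\]
a scaling dictated heuristically by the balance $G(h/r)\,r^n\sim h\,\mu_u(B_j)$ together with $G(s)/s\sim g(s)$. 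The second step is the intrinsic Harnack inequality for nonnegative $\opA$-harmonic functions under~{\bf (A)}, which together with the monotonicity of the $v_j$ at $x_0$ propagates control from scale $r_j$ to $r_{j+1}$ and delivers
\[
m_{j+1}-m_j\ \leq\ C\,r_j+C\,r_j\,g^{-1}\!\left(\frac{\mu_u(B_j)}{r_j^{n-1}}\right).
\]
Summing over $j\geq 0$ and recognising the right-hand side as a logarithmic dyadic Riemann sum for $\mathcal{W}^{\mu_u}_G(x_0,R)+R$ yields the upper estimate.

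For the lower bound, at each dyadic scale $r_j$ I would construct a lower barrier: let $w_j$ be the $\opA$-superharmonic function in $B_{2r_j}$ generated by $\mu_u|_{B_j}$ with zero Dirichlet data. The difference $u-w_j$ is $\opA$-superharmonic in $B_{2r_j}$ with nonnegative boundary values, so the comparison principle yields $u\geq w_j$ on $B_{2r_j}$. A capacitary test-function estimate applied to the weak formulation of $-\dv\opA(x,Dw_j)=\mu_u|_{B_j}$, combined with the Harnack inequality for $w_j$ near $x_0$, gives a pointwise barrier $w_j(x_0)\geq c\,r_j\,g^{-1}(\mu_u(B_j)/r_j^{n-1})-c\,r_j$. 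Summing along a geometrically spaced subsequence of scales (and using monotonicity of $r\mapsto\mu_u(B_r)$ to pass from dyadic to continuous) recovers $u(x_0)\geq c\,(\mathcal{W}^{\mu_u}_G(x_0,R)-R)$.

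The hardest step will be the Orlicz comparison estimate for $\sup_{B_{j+1}}(u-v_j)$: the non-homogeneity of $G$ forbids the classical scaling and homogeneity-based De~Giorgi / Moser iteration, so the level-set iteration must be reimplemented intrinsically in terms of $G$ and $g^{-1}$, with all constants depending only on $\data$ and $n$; this is where the machinery of~\cite{CZG-gOp} is indispensable. A secondary, more bookkeeping issue is to make precise the sense in which $u$ generates $\mu_u$ and to ensure the weak formulation remains valid at the low regularity of approximable solutions, which is handled by passing through the smooth supersolution approximations of Section~\ref{sec:prelim} and invoking stability of the relevant estimates.
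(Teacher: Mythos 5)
Your upper bound follows the Kilpel\"ainen--Mal\'y scheme ($\opA$-harmonic replacement $v_j$ in each dyadic ball plus the sup-comparison estimate $\sup_{B_{j+1}}(u-v_j)\lesssim r_j\,g^{-1}(\mu_u(B_j)/r_j^{n-1})$), whereas the paper deliberately avoids that route and instead uses the Korte--Kuusi/Trudinger--Wang strategy: a single Poisson modification $v=P(u,\omega)$ over a union of shrinking annuli $\omega=\bigcup_k(\tfrac32 B_k\setminus\overline{B_k})$, observing that $\mu_v(B_k)=\mu_u(B_k)$, and comparing $v$ with solutions $w_k\in W^{1,G}_0(\tfrac43 B_{k+1})$ of $-\dv\opA(x,Dw_k)=\mu_v$. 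The paper then never needs an $L^\infty$ comparison estimate: it reads off a lower bound on $\max_{\partial\frac43 B_{k+2}}w_k$ from the energy equality for $w_k$ via modular Poincar\'e, Caccioppoli, and the internal Harnack inequality, and telescopes $\max_{\partial\frac43 B_{k+2}}v-\max_{\partial\frac43 B_{k+1}}v$. Your route is in principle viable, but the sup-comparison estimate you flag as the hardest step really is a separate, nontrivial result in the Orlicz setting (especially with merely measurable $x$-dependence and no homogeneity $\opA(x,k\xi)=|k|^{p-2}k\opA(x,\xi)$), and the paper's whole point in adopting Poisson modifications is to bypass exactly this De Giorgi--Moser iteration. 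So the upper bound is a genuinely different (classical) route, with a larger unproven ingredient than you acknowledge.

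The lower bound has a concrete gap. You claim that if $w_j$ solves $-\dv\opA(x,Dw_j)=\mu_u|_{B_j}$ with zero boundary data, then ``the difference $u-w_j$ is $\opA$-superharmonic with nonnegative boundary values, so $u\geq w_j$''. This is linear reasoning: for nonlinear $\opA$, the difference of a supersolution and a solution is \emph{not} an $\opA$-supersolution, because $-\dv\bigl(\opA(x,Du)-\opA(x,Dw_j)\bigr)\geq 0$ is not the same as $-\dv\opA(x,D(u-w_j))\geq 0$. The comparison $w_k\leq u-\min_{\partial B_k}u$ must instead be obtained by testing the two weak formulations against the common test function $(w_k-u+\min_{\partial B_k}u)_+\in W^{1,G}_0(B_k)$, subtracting, and invoking monotonicity of $\opA$ to conclude $D(w_k-u+\min_{\partial B_k}u)_+=0$, which is precisely what the paper does. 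Note also the shift by $\min_{\partial B_k}u$ is essential so that the test function lies in $W^{1,G}_0(B_k)$; your version with zero Dirichlet data for $w_j$ in $B_{2r_j}$ compared against $u$ directly does not produce an admissible test function without such a constant subtraction. Once this is repaired, the rest of your lower bound sketch (test-function/Caccioppoli estimate for $w_k$ near $x_0$ combined with Harnack, then dyadic summation and the minimum principle $\min_{\partial B_{k}}u\leq u(x_0)$) is consistent with what the paper does, though your appeal to a ``capacitary test-function estimate'' is vaguer than the concrete choice $(\min\{w_k,\max_{\partial\frac23 B_k}w_k\}+R_k)\phi^q$ used in the paper.
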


\begin{rem}The radius $R<R_\cW$ has to satisfy a smallness condition from Harnack's inequality (Proposition~\ref{prop:harnack}), namely $R<R_H(n)$ and it has to be such that $B_{3R}\Subset\Omega$ and $\vr_{g,B_{3R}}(|Du|)\leq 1$, cf. Remark~\ref{rem:g-intergrability-of-Ah}.
\end{rem}

A remarkable point here is to realize that the same potential controls both estimates, so it cannot be replaced by any other smaller potential. In turn, the estimate~\eqref{eq:u-est} is essentially sharp. Note also that it is not essential that on the right-hand side we have infimum of $u$. Without loss of the strength of the result it could be substituted by some other finite quantity.

Potential estimates are known to be an efficient tool to bring precise information on the local behaviour of solutions. We refer to~\cite{KuMi2014} for clearly presented overview of consequences of estimates like~\eqref{eq:u-est} in studies on $p$-superharmonic functions. Below we present applications of Theorem~\ref{theo:est-sol} in the regularity theory and in the proof of the Hedberg--Wolff Theorem for operators of general growth.

\subsection{Local behaviour of solutions to measure data problems}\label{ssec:loc-beh} In this section we investigate the  great deal of powerful consequences of Theorem~\ref{theo:est-sol} in the regularity of $\opA$-superharmonic functions related to `approximable solutions' $u$ to a measure data problems \[-\dv \opA(x,Du)=\mu\geq 0,\]
see Sections~\ref{ssec:sols} and~\ref{ssec:A-sh-gen-meas}. 
Note that the conditions are expressed in the very natural scales and the results were not known in this generality before.

Let us start with  the following trivial remark.
\begin{coro} Under {\rm Assumption {\bf (A)}} suppose $u$ is a nonnegative $\opA$-superharmonic function in $\Omega$ and $\mu_u:=-\dv \opA(x,Du)$ in the sense of distributions. Then $u$ is locally bounded if and only if $\cW^\mu_G(\cdot,R)$ is locally bounded for all sufficiently small $R$.
\end{coro}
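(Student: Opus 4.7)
The corollary is a direct consequence of the two-sided estimate~\eqref{eq:u-est} of Theorem~\ref{theo:est-sol}; the only points requiring attention are the additive error term $R$ and the infimum appearing on the right-hand side.

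For the implication ``$u$ locally bounded $\Rightarrow$ $\cW^{\mu_u}_G(\cdot,R)$ locally bounded for all small $R$'' I would fix a compact set $K\Subset\Omega$ and choose $R>0$ small enough that $B(x,R_\cW)\Subset\Omega$ for every $x\in K$ and $R<R_\cW/2$. Rearranging the lower bound of~\eqref{eq:u-est} then yields
\[\cW^{\mu_u}_G(x,R)\leq \frac{u(x)}{C_L}+R\qquad\text{for every }x\in K,\]
whose right-hand side is bounded on $K$ by assumption. Any smaller radius works as well by the obvious monotonicity of the potential in $R$.

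The converse is only slightly more delicate since the upper bound in~\eqref{eq:u-est} involves $\inf_{B(x,R)}u$, which must itself be shown to be locally bounded. Since an $\opA$-superharmonic function that is not identically $+\infty$ is finite almost everywhere, for every $x_0\in K$ I would select $y_0$ with $|x_0-y_0|<R/4$ and $u(y_0)<\infty$; then $y_0\in B(x,R)$ for every $x\in B(x_0,R/4)$, and hence $\inf_{B(x,R)}u\leq u(y_0)<\infty$. Inserting this bound, together with the assumed local bound on $\cW^{\mu_u}_G(\cdot,R)$, into the upper estimate of~\eqref{eq:u-est} produces a uniform bound for $u$ on $B(x_0,R/4)$; a finite subcover of $K$ by such balls finishes the argument. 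The only step carrying any real content is the location of the nearby point of finiteness, which is supplied by the basic potential-theoretic fact that a nontrivial $\opA$-superharmonic function is finite a.e.; no further obstacle arises.
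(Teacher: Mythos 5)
Your proof is correct and supplies the intended direct application of Theorem~\ref{theo:est-sol}, which the paper leaves implicit by labeling the corollary a ``trivial remark''. Both directions and the treatment of the infimum term are sound; one could even skip the appeal to ``a nontrivial $\opA$-superharmonic function is finite a.e.'', since defining $\mu_u$ through the generalized gradient (Proposition~\ref{prop:A-sh-gen-meas}) already presupposes $u$ finite a.e.
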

Note that smallness of the potential is equivalent to continuity of the solution.
\begin{coro} \label{coro:cont}
 Under {\rm Assumption {\bf (A)}} suppose $u$ is a nonnegative $\opA$-superharmonic and finite a.e. in $\Omega$ and $\mu_u:=-\dv \opA(x,Du)$ in the sense of distributions. Then $u$ is continuous in $x_0$ if and only if for every $\ve>0$ there exists $r>0$, such that $\cW^{\mu_u}_G(x,r)<\ve$ whenever $x\in B(x_0,r).$
\end{coro}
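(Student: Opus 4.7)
The plan rests on two structural facts that let us use Theorem~\ref{theo:est-sol} effectively despite its constants $C_L,C_U$ being larger than $1$. First, every $\opA$-superharmonic function (in its canonical representative) is lower semicontinuous, so $\liminf_{y\to x_0}u(y)\ge u(x_0)$ and hence $m_r:=\inf_{B(x_0,r)}u\nearrow u(x_0)$ as $r\searrow 0$. Second, the equation is invariant under the addition of a constant: if $c\in\mathbb{R}$ then $u-c$ is again $\opA$-superharmonic with $\mu_{u-c}=\mu_u$, so the Wolff potential $\cW^{\mu_u}_G$ is unaffected by translations of $u$. These ingredients together let us shift $u$ to make it non-negative on any prescribed ball in order to apply Theorem~\ref{theo:est-sol}, without changing the potential.

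For the direction $(\Leftarrow)$, fix $\eta>0$. By the hypothesis and by the lower semicontinuity of $u$ at $x_0$, choose $r>0$ small enough that $\cW^{\mu_u}_G(x,r)<\eta/(3C_U)$ for every $x\in B(x_0,r)$, that $u(x_0)-m_r<\eta/(3C_U)$, that $r<\eta/C_U$, and that the smallness conditions of the remark following Theorem~\ref{theo:est-sol} are met. The shift $v:=u-m_r$ is non-negative and $\opA$-superharmonic in $B(x_0,r)$ with $\mu_v=\mu_u$. Applying the upper bound of \eqref{eq:u-est} to $v$ centered at $y\in B(x_0,r/8)$ with a radius of order $r$, and using $\inf v\le v(x_0)=u(x_0)-m_r$ on that inner ball (which contains $x_0$), yields $u(y)\le u(x_0)+\eta$; hence $\limsup_{y\to x_0}u(y)\le u(x_0)$, and combined with lower semicontinuity this gives continuity at $x_0$.

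For the converse $(\Rightarrow)$, fix $\ve>0$ and use continuity to pick $R_*>0$ with $|u(y)-u(x_0)|<C_L\ve/4$ on $B(x_0,2R_*)$, and then any $r\le\min\{R_*/2,\ve/2\}$. For each $x\in B(x_0,r)$ we have $B(x,R_*)\subset B(x_0,2R_*)$, so $w:=u-\inf_{B(x,R_*)}u$ is non-negative and $\opA$-superharmonic in $B(x,R_*)$, satisfies $\mu_w=\mu_u$, and $w(x)\le C_L\ve/2$. The lower bound of \eqref{eq:u-est} applied to $w$ at $x$ with radius $r$ reads $C_L(\cW^{\mu_u}_G(x,r)-r)\le w(x)$, giving $\cW^{\mu_u}_G(x,r)\le w(x)/C_L+r\le\ve$, as required.

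The main obstacle, and the only nontrivial one, is that the constants in Theorem~\ref{theo:est-sol} exceed $1$: a direct substitution would yield $u(y)\lesssim C_U u(x_0)$ plus a small remainder, which is not enough for continuity. The translation-invariance trick—subtracting the infimum of $u$ on a well-chosen ball—replaces that $C_U u(x_0)$ factor by $C_U(u(x_0)-m_r)$, which tends to zero as $r\to 0$ thanks to lower semicontinuity, and the analogous shift on the lower-bound side handles the converse direction.
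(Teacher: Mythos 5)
Your proposal is correct and follows essentially the same strategy as the paper's proof: in both directions you exploit the translation invariance $\mu_{u-c}=\mu_u$ to shift $u$ by an infimum on a suitable ball so that Theorem~\ref{theo:est-sol} applies, and you then use continuity (resp. lower semicontinuity plus the hypothesis on the potential) to make the resulting terms small. The paper's reverse direction subtracts the constant $u(x_0)-\ve$ while you subtract $m_r=\inf_{B(x_0,r)}u$, but this is a cosmetic variation of the same idea.
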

\begin{proof} 
We suppose 
that $u$ is continuous in $x_0$. Let us fix $\ve$ and choose $r\in (0,\ve)$, such that $|u-u(x_0)|<\ve C_L$ in ${B(x_0,4r)}\Subset\Omega.$ Set $B=B(x,r)$ for some $x\in B(x_0,r)$. We observe that $(u-\inf_{B} u)$ is a nonnegative, $\opA$-superharmonic function and $\mu_u=\mu_{(u-\inf_B u)}$ (see Section~\ref{ssec:A-sh-gen-meas}), so we can apply Theorem~\ref{theo:est-sol} to get \[\cW^{\mu_u}_G(x,r)\leq \tfrac{1}{C_L}(u(x)-\inf_Bu)+r=  \tfrac{u(x)-u(x_0)}{C_L}+\tfrac{u(x_0)-\inf_Bu}{C_L}+r\leq 3{\ve}.\]

On the other hand, for the reverse suppose $u(x_0)<\infty$. Since $u$ is lower semicontinuous, for any $\ve>0$ we may choose $r\in(0,\ve)$ such that $u>u(x_0)-\ve$ in $B(x_0,4r)$. Moreover, $(u-u(x_0)+\ve)$ is $\opA$-superharmonic in $B(x_0,4r)$ and generates the same measure as $u$. Observe that  for every $x\in B(x_0,r)$ we have $B(x_0,r)\subset B(x,2r)\subset B(x_0,4r).$ Then {function $(u-u(x_0)+\ve)$ is $\opA$-superharmonic in $B(x,2r)$, thus}  for every $x\in B(x_0,r)$ we infer {by Theorem \ref{theo:est-sol}}
\begin{flalign*}
0&<u(x)-u(x_0)+\ve\leq C_U\Big(\inf_{{B(x,2r)}}(u-u(x_0)+\ve)+ {2r}+\cW^{\mu_u}_G(x, {2r})\Big)\\
&\leq C_U\Big(\inf_{B(x_0,r)}(u-u(x_0)+\ve)+ {2r}+\cW^{\mu_u}_G(x, {2r})\Big)\leq C_U(0+ 3)\ve,
\end{flalign*}
which ends the proof.
\end{proof}

Corollary~\ref{coro:cont} has the following direct consequence.
\begin{rem}\rm If $u$ and $v$ are nonnegative and $\opA$-superharmonic in $\Omega$, $u$ is continuous in $x_0\in\Omega$, and for some absolute constant $c>0$ it holds $\mu_v\leq c\mu_u$, then $v$ is also continuous in $x_0$.\end{rem}

Let us concentrate on the estimate on the counterpart of the fundamental solution, retrieving the optimal conditions in the power-growth case, see Remark~\ref{rem:p-potential}.
\begin{coro}\label{coro:potential}
 Under {\rm Assumption {\bf (A)}} suppose $u$ is a nonnegative $\opA$-superharmonic function in $B(x_0, R_\cW)\Subset\Omega$, such that $-\dv\opA(x,Du)=\mu_u=\delta_{x_0}$  in the sense of distributions. Assume further that $x$ is  close to $x_0$, namely such that for $r=|x-x_0|$ there holds $B_{r}=B(x,r)\subset B(x,2r) \subset B(x_0, R_\cW/4)$. Then there exists $c=c(\data,n)>0$, such that \[c^{-1}\left(\int_r^{2r} g^{-1}\left({s^{1-n}}\right)\,ds-r\right)\leq u(x)\leq c\left( \int_r^{2r} g^{-1}\left({s^{1-n}}\right)\,ds+\inf_{B_{2r}} u+r\right).\]
 If additionally $G$ is so fast {at} infinity that \begin{equation}
    \label{int-div}\int_0 g^{-1}\left({s^{1-n}}\right)\,ds<\infty,\end{equation}   then $u\in L^\infty(B_r).$ This bound is optimal.%
\end{coro}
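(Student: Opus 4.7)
The plan is to read off both inequalities as a direct specialization of Theorem~\ref{theo:est-sol}, with the Dirac mass making the Wolff potential explicit. First I would apply Theorem~\ref{theo:est-sol} with center $x$ in place of $x_0$ and radius $2r$: this is legitimate because the hypothesis $B(x,2r)\subset B(x_0,R_\cW/4)$ forces $B(x,R_\cW/2)\Subset B(x_0,R_\cW)\Subset\Omega$, so all the smallness requirements on the radius from Theorem~\ref{theo:est-sol} (the Harnack scale and the $g$-integrability of $|Du|$) can be arranged by shrinking $R_\cW$ once and for all. The theorem then yields
\[
C_L\bigl(\cW^{\mu_u}_G(x,2r)-2r\bigr)\leq u(x)\leq C_U\Bigl(\inf_{B(x,2r)}u+\cW^{\mu_u}_G(x,2r)+2r\Bigr).
\]

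Next I would compute $\cW^{\mu_u}_G(x,2r)$ with $\mu_u=\delta_{x_0}$. Since $r=|x-x_0|$, one has $x_0\in B(x,\rho)$ iff $\rho>r$, so $\delta_{x_0}(B(x,\rho))=\mathds 1_{\{\rho>r\}}$ and, using $g^{-1}(0)=0$,
\[
\cW^{\mu_u}_G(x,2r)=\int_0^{2r} g^{-1}\!\left(\frac{\delta_{x_0}(B(x,\rho))}{\rho^{n-1}}\right)d\rho=\int_r^{2r} g^{-1}(\rho^{1-n})\,d\rho.
\]
Plugging this back and absorbing the constants $2r$ into the $r$-terms yields the two-sided estimate of the corollary (with $\inf_{B_{2r}}u$ arising from $\inf_{B(x,2r)}u$).

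For the $L^\infty$-assertion, condition \eqref{int-div} together with monotonicity gives $\cW^{\mu_u}_G(x,2r)\leq \int_0^{2R_\cW}g^{-1}(s^{1-n})\,ds<\infty$ uniformly in $x\in B_r$, and the upper bound combined with the finiteness of $\inf_{B_{2r}}u$ (which is bounded above by the value of $u$ at any point of positivity away from $x_0$) implies $u\in L^\infty(B_r)$. Optimality is verified in the power case $G(s)=s^p$, for which $g^{-1}(\rho^{1-n})\sim \rho^{(1-n)/(p-1)}$, so \eqref{int-div} reads $p>n$; this is exactly the threshold at which the fundamental $p$-solution $|x-x_0|^{(p-n)/(p-1)}$ is bounded, cf.\ Remark~\ref{rem:p-potential}.

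The only genuine obstacle is administrative: making sure that the smallness thresholds on the radius required by Theorem~\ref{theo:est-sol} (Harnack scale, local $g$-integrability of $|Du|$, and the enclosure $B(x,2r)\Subset\Omega$) can be met simultaneously by one single choice of $R_\cW$. Once this is set up, both inequalities and the boundedness statement are immediate from the explicit form of the Wolff potential of a Dirac mass.
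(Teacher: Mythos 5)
Your proof is correct and follows essentially the same route as the paper's: apply Theorem~\ref{theo:est-sol} centered at $x$ with radius $2r$, compute the Wolff potential of $\delta_{x_0}$ explicitly (which collapses to $\int_r^{2r}g^{-1}(s^{1-n})\,ds$ since $\delta_{x_0}(B(x,\rho))=\mathds{1}_{\{\rho>r\}}$), and read off the two-sided bound. The only place you diverge is the optimality claim at the end: you verify it in the power case $G(s)=s^p$ only, whereas the paper establishes the Orlicz-general statement by rewriting $\int_0 g^{-1}(s^{1-n})\,ds\simeq\int^\infty \wt G(t)\,t^{-1-n'}\,dt$ (via $g^{-1}\simeq(\wt G)'$ and $\wt G(t)\simeq t(\wt G)'(t)$) and identifying the latter as the sharp Cianchi condition for the embedding $W^{1,G}\hookrightarrow L^\infty$. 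The power-case check is a good sanity test, but it does not by itself justify ``optimal'' for arbitrary $G\in\Delta_2\cap\nabla_2$; the reduction to the known sharp Orlicz--Sobolev embedding threshold is what makes the optimality assertion precise in this generality.
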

  
\begin{proof} Since $u$ is $\opA$-superharmonic  in $B(x, 2r)$, we  have by  Theorem~\ref{theo:est-sol}
\begin{flalign*}  C\left(\mathcal{W}^{\mu_u}_G (x , 2r) - 2r\right)\leq u(x )&\leq C\left( \inf_{B_{2r}} u(x)+\mathcal{W}^{\mu_u}_G (x , 2r)+2r\right).
\end{flalign*}
On the other hand, recalling that $\mu_u=\delta_{x_0}$, we get
$$\mathcal{W}^{\mu_u}_G (x , 2r)=\int_r^{2r} g^{-1}\left({s^{1-n}}\right)\,ds$$ and hence the estimate from the claim follows.

Assumption \eqref{int-div} is precisely the one ensuring optimal embedding of Orlicz-Sobolev spaces into $L^\infty$ or into the space of continuous bounded functions, see \cite{Ci96-emb,Ci-SNS,Ci-some}. Indeed, it suffices to recall that $ g^{-1}(t)\simeq (\wt G)'(t)$ and $\wt G(t)\simeq t (\wt G)'(t)$ to get for $0<a<b$ that
\begin{equation*}
\int_a^{b} g^{-1}(s^{1-n})\,ds \simeq \int_a^{b} (\wt G)' (s^{1-n})\,ds \simeq \int_a^{b} \wt G(s^{1-n})s^{n-1}\,ds \simeq  \int_{b^{1-n}}^{a^{1-n}} \frac{\wt G(t)}{t^{1+n'}}\,dt.
\end{equation*}
 Therefore,~\eqref{int-div} is equivalent to \begin{equation*}
     \int^\infty \frac{\wt G(t)}{t^{1+n'}}\,dt<\infty.\end{equation*}
\end{proof}
\begin{rem}\rm\ {
If $u$ is an $\opA$-superharmonic function which solves the equation $-\dv\opA(x,Du)=\mu$ with a bounded Radon measure $\mu$ in the distributional sense, and \eqref{int-div} holds, then $u$ is bounded. It follows that $u \in W^{1,G}_0(\Omega)$ (see e.g. \cite{CZG-gOp}), and therefore $\mu \in (W^{1,G}_0(\Omega))'$. As a matter of fact, then the distributional solutions are weak solutions and the classical regularity theory provide more information by completely other strong tools.} 
\end{rem}

\begin{rem}\label{rem:p-potential}\rm When $G(t)\simeq t^p,$ $1<p<\infty,$ by Corollary~\ref{coro:potential} we retrieve the classical estimates for the fundamental solution to $p$-Laplace equation $-\Delta_pu=\delta_0.$ In fact, if $u$ is a nonnegative $\opA$-superharmonic function satisfying $-\dv \opA(x,Du)=-{ \dv}\left(a(x)|D u|^{p-2} D u \right)=\delta_0$ in the sense of distributions for $0<a\in L^\infty(\Omega)$  separated from zero, then \begin{flalign*}
   c^{-1} |x|^{-\frac{n-p}{p-1}}\leq u(x)&\leq c\left(|x|^{-\frac{n-p}{p-1}}+\inf_{{B(x,2|x|)}} u\right)\quad \text{ when }\  1<p<n,&\\
   c^{-1}\log |x|\leq u(x)&\leq c\left(\log |x|+\inf_{{B(x,2|x|)}} u\right)\quad \ \  \text{ when }\  p=n,&\\
   u(x)&\leq c \qquad\qquad\qquad\qquad\, \text{ when }\  p>n.&\\
 \end{flalign*} 
 Moreover, similarly  Corollary~\ref{coro:potential} has one less precise, but more direct consequence for the general growth operators expressed by the use of indices $i_G,s_G$ defined in~\eqref{iG-sG} for which it holds~\eqref{comp-i_G-s_G}. In fact, when $i_G\neq n\neq s_G$ we have
$$
c^{-1} |x|^{-\frac{n-s_G}{s_G - 1}} \leq u(x) \leq c (|x|^{-\frac{n-i_G}{i_G - 1}} + \inf_{{B(x,2|x|)}} u ).
$$
\end{rem}
{
The above remark can be sharply extended to the Zygmund case.
\begin{rem}\label{rem:O-potential}  \rm Suppose that $1<p<n,$ $\alpha\in\R$, $0<a\in L^\infty(\Omega)$  separated from zero, and $u$ is a nonnegative $\opA$-superharmonic function in $\Omega$, such that
\begin{equation}
    \label{eq-log}-\dv \opA(x,Du)=-{ \dv}\left(a(x)|D u|^{p-2}\log^\alpha({\rm e}+|D u|)  D u \right)=\delta_0
\end{equation}  in the sense of distributions. Then \begin{flalign*}
   c^{-1} |x|^{-\frac{n-p}{p-1}}\log^{-\frac{\alpha}{p-1}}({\rm e}+|x|)\leq u(x)&\leq c\left(|x|^{-\frac{n-p}{p-1}}\log^{-\frac{\alpha}{p-1}}({\rm e}+|x|)+\inf_{{B(x,2|x|)}} u\right).
 \end{flalign*} 
\end{rem}
 }

We can infer local continuity of a solution if the datum belongs to a generalized Lorentz space. This fact has  the best possible consequence in the $p$-Laplace case, see Remark~\ref{rem:p-Lorentz}. Note that this result is meaningful only when $G$ grows so slowly that~\eqref{int-div} is violated, cf.~\cite{Ci-SNS} and Remark~\ref{rem:O-potential} above.

For the reader's convenience we recall some definitions. 
We define the decreasing rearrangement $f^\ast$ of a measurable function $f:\Omega\to\R$ by
$$
f^\ast(t) = \sup \{ s \geq 0 \colon |\{x\in \R^n:f(x)>s\}| > t  \},
$$
the maximal rearrangement by
$$
f^{\ast \ast}(t) =\frac 1t \int_0^t f^\ast(s) \, ds\quad\text{and}\quad 
f^{\ast \ast}(0)=
f^{\ast}(0),
$$
and finally the Lorentz space $L(\alpha, \beta)(\Omega)$ for $\alpha,\beta>0$ as the space of measurable functions such that  
$$ 
 \int_0^\infty \left( t^{1/ \alpha} f^{\ast \ast}(t)\right)^\beta\,\frac{dt}{t} 
 <\infty
$$
(see \cite[Theorem 3.21]{stein-w}).

\begin{coro} \label{coro:cont-2}  Under {\rm Assumption {\bf (A)}} suppose $u$ is a nonnegative $\opA$-superharmonic function in $\Omega$ and $F_u:=-\dv \opA(x,Du)$  in the sense of distributions. If $F_u$ 
satisfies 
\begin{equation}
    \label{cond-Lor}
 \int_0^\infty  t^\frac{1}{n} g^{-1} {\left(t^\frac{1}{n}F_u^{\ast \ast}(t)\right)}\,\frac{dt}{t}  <\infty\end{equation}
for $\Omega_0\Subset\Omega$, then $u\in C(\Omega_0)$.
\end{coro}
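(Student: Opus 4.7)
The plan is to reduce the continuity claim to the Wolff potential smallness criterion of Corollary~\ref{coro:cont} and then to bound $\cW^{\mu_u}_G$ by the Lorentz-type integral in~\eqref{cond-Lor}, uniformly in the base point. Since $u$ is $\opA$-superharmonic, $\mu_u$ is a nonnegative measure; the Lorentz condition~\eqref{cond-Lor} tacitly requires $F_u$ to be a (nonnegative) function, so $d\mu_u = F_u\,dx$ on $\Omega$.

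Fix $x_0 \in \Omega_0$ and pick $R_0>0$ with $B(x_0,2R_0)\Subset\Omega$. For $r\in(0,R_0]$ and $x\in B(x_0,R_0)$, the Hardy--Littlewood rearrangement inequality applied to $F_u$ on the ball $B(x,r)$ yields
\begin{equation*}
\mu_u(B(x,r)) \,=\, \int_{B(x,r)} F_u\,dy \,\leq\, |B(x,r)|\, F_u^{\ast\ast}(|B(x,r)|) \,=\, c_n r^n\, F_u^{\ast\ast}(c_n r^n),
\end{equation*}
where $c_n = |B(0,1)|$. In particular, $\mu_u(B(x,r))/r^{n-1} \leq c_n r\, F_u^{\ast\ast}(c_n r^n)$. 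Plugging this into the definition of $\cW^{\mu_u}_G(x,R)$, using monotonicity of $g^{-1}$, and changing variable $t=c_n r^n$ gives, after elementary algebra,
\begin{equation*}
\cW^{\mu_u}_G(x,R) \,\leq\, \int_0^R g^{-1}\bigl(c_n\, r\, F_u^{\ast\ast}(c_n r^n)\bigr)\,dr \,\leq\, C(n) \int_0^{c_n R^n} t^{1/n}\, g^{-1}\bigl(\kappa_n\, t^{1/n}F_u^{\ast\ast}(t)\bigr)\,\frac{dt}{t},
\end{equation*}
for an explicit constant $\kappa_n>0$. The factor $\kappa_n$ inside $g^{-1}$ may be absorbed into a larger constant in front via the doubling inequality $g^{-1}(\lambda s)\leq c(\lambda)\,g^{-1}(s)$ for $\lambda\geq 1$, which is a standard consequence of the $\nabla_2$ condition on $G$ (equivalently, of $\widetilde G\in\Delta_2$).

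The resulting upper bound for $\cW^{\mu_u}_G(x,R)$ is independent of the center $x\in B(x_0,R_0)$ and, by~\eqref{cond-Lor}, tends to zero as $R\to 0^+$ (it is a tail of a convergent integral). Hence, given $\ve>0$, we choose $r\in(0,R_0)$ small enough so that $\cW^{\mu_u}_G(x,r)<\ve$ for every $x\in B(x_0,r)$; Corollary~\ref{coro:cont} then provides continuity of $u$ at $x_0$, and since $x_0\in\Omega_0$ was arbitrary we conclude $u\in C(\Omega_0)$. The only slightly delicate step is the bookkeeping of the $n$-dependent constants produced by the substitution together with the invocation of the doubling property of $g^{-1}$; once these are tracked, the result is a direct combination of the rearrangement inequality with the pointwise potential bound of Theorem~\ref{theo:est-sol} encoded in Corollary~\ref{coro:cont}.
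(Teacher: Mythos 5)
Your proposal is correct and follows essentially the same route as the paper: bound $\mu_u(B(x,r))/r^{n-1}$ by the maximal rearrangement $F_u^{\ast\ast}$ via the Hardy--Littlewood inequality, insert this into the definition of $\cW^{\mu_u}_G(x,R)$, convert to the Lorentz-type integral in~\eqref{cond-Lor} (the paper does this through a dyadic decomposition and inequality~\eqref{1na8}, while you substitute $t=c_n r^n$ directly, which is mere bookkeeping), absorb the dimensional constant inside $g^{-1}$ by the doubling of $g^{-1}$ coming from Lemma~\ref{lem:equivalences}, observe that the resulting upper bound is uniform in the center $x$ and is the tail of a convergent integral, and conclude via Corollary~\ref{coro:cont}.
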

\begin{proof}Since $\Omega_0$ can be covered by a finite number of balls, it suffices to provide the estimates in the localized case only. Set $x\in\Omega_0,$ $R_k=2^{1-k}R$ and $B_k=B(x,R_k)\Subset B(x,R_\cW/2)$ for $k=0,1,\dots\,$. As $F_u$ is taken in a place of a measure in Theorem~\ref{theo:est-sol}, with a slight abuse of notation we will write $F_u(B(x,R_k))=\int_{B_k}F_u(y)\,dy.$ We notice that 
we have
\begin{flalign*}
 W_G^{F_u}(x,R)&=\sum_{k=1}^\infty \int_{R_{k+1}}^{R_k} g^{-1}\left(\frac{F_u(B(x,r))}{r}\right)\,dr\lesssim \sum_{k=1}^\infty  {{R_{k+1}}} g^{-1}\left(\frac{F_u(B(x,R_k))}{R_k}\right)\,\\&
 {\lesssim \sum_{k=1}^\infty  {R_{k}} g^{-1}\left(\frac{F_u(B(x,R_k))}{R_k^{n-1}}\right)}
 \end{flalign*}
To estimate the series we employ the decreasing rearrangement $F_u^*$ of $F_u$ and its maximal rearrangement $F_u^{**}$. When $w_n$ is the volume of the unit ball, we have that
\begin{flalign*}
  \frac{F_u(B(x,R_k))}{R_k^{n-1}}&=\frac{1}{R_k^{n-1}}\int_{B(x,R_k)}F_u(y)\,dy\\&\leq {w_n R_k}\, \barint_0^{w_nR_k^n}F_u^*(t)\,dt={w_n R_k}\, F_u^{**}({w_nR_k^n})\,.
\end{flalign*}
Then  we have \begin{flalign}\label{1na8} R_kg^{-1}\left(\frac{F_u(B(x,R_k))}{R_k^{n-1}}\right)&\lesssim R_kg^{-1}\left({w_n R_k}\, F_u^{**}({w_nR_k^n})\right)\\ &\lesssim  \int_{w_n R_k^{n}}^{w_n R_{k-1}^n}\rho^\frac{1}{n}g^{-1}\left(\rho^\frac{1}{n} \, F_u^{**}(\rho)\right)\,\frac{d\rho}{\rho}\nonumber
\end{flalign}
with implicit constants independent of $k$. Therefore
\begin{flalign*}\sup_{x\in\Omega_0}\cW_G^{F_u}(x,R)&\lesssim \sum_{k=1}^\infty \int_{w_n R_k^{n}}^{w_n R_{k-1}^n}{ \rho^{\frac{1}{n}}} g^{-1}\left(\rho^\frac{1}{n}\, F_u^{**}(\rho)\right)\,\frac{d\rho}{\rho}\\
&= \int_{0}^{w_n R^n} \rho^\frac{1}{n} g^{-1}\left(\rho^\frac{1}{n}\, F_u^{**}(\rho)\right)\,\frac{d\rho}{\rho}\,.
\end{flalign*}
By the assumption on $F_u$, we get the  convergence of $\cW_G^{\mu_u}(x,R)$ to zero uniform with respect to $x\in\Omega_0$ as $R\to 0$. Then Corollary~\ref{coro:cont} gives the desired continuity of $u$ in every point of $\Omega_0$.
\end{proof}

By Corollary~\ref{coro:cont-2} we retrieve the following classical result.
\begin{rem}\label{rem:p-Lorentz}\rm If $u$ is a nonnegative $\opA$-superharmonic function satisfying $-\dv \opA(x,Du)=-{ \dv}\left(a(x)|D u|^{p-2} D u \right)=F_u$ in the sense of distributions for $p>1$  and $0<a\in L^\infty(\Omega)$  separated from zero and $F_u$ belongs locally to the  Lorentz space  {$L(\tfrac{n}{p},\tfrac{1}{p-1})(\Omega)$}
then $u$ is continuous. \end{rem}

We can extend the above remark to the Zygmund case.
\begin{rem} \rm Suppose that $1<p<n,$ $\alpha\in\R$, $0<a\in L^\infty(\Omega)$  separated from zero, and $u$ is a nonnegative $\opA$-superharmonic function in $\Omega$, such that  
\begin{equation}
    \label{eq-log-Fu}-\dv \opA(x,Du)=-{ \dv}\left(a(x)|D u|^{p-2}\log^\alpha({\rm e}+|D u|)  D u \right)=F_u
\end{equation} is satisfied in the sense of distributions. Observe that in this case $g^{-1}(\lambda)\simeq \lambda^\frac{1}{p-1}\log^{-\frac{\alpha}{p-1}}({\rm e}+\lambda)$. If $F_u$ satisfies~\eqref{cond-Lor}, then $u$ is continuous.
\end{rem} 
 
A density condition for a measure expressed in the relevant generalized Morrey-type scale is equivalent to H\"older continuity of the solution to measure data problem. Namely, we consider the class of measures for which there exist positive constants $c=c(\data,n)>0$ and $\theta\in (0,1)$ such that  \begin{equation}
    \label{mu-control}
\mu_{u,\theta}(B(x,r))\leq c r^{n-1}g(r^{\theta-1})\simeq r^{n-\theta}G(r^{\theta-1})
\end{equation}
for each $B(x,r)$ with $B(x,2r)\Subset\Omega$ and $r<\min\{1,R_{\mathcal{W}}/2\}$. This condition is a~natural Orlicz version of the related one from~\cite{Car,KiMa92,kizo,RaZi} and the one used to characterize removable sets for H\"older continuous solutions, cf.~\cite{ChKa}. 
Notice that for $p$-growth problems~\eqref{mu-control} reads as $\mu_{u,\theta}(B(x,r))\leq c  r^{n-p+\theta(p-1)}.$

\begin{coro} \label{coro:H-cont}  Under {\rm Assumption {\bf (A)}} suppose $u$ is a nonnegative $\opA$-superharmonic function in $\Omega$ and $\mu_u:=-\dv \opA(x,Du)$  in the sense of distributions. Assume further that $u\in C^{0,\theta}_{loc}(\Omega)$ with certain $\theta\in(0,1)$, then there exists a constant $c=c(\data,n)>0$, such that $\mu_u=\mu_{u,\theta}$ satisfies the density condition~\eqref{mu-control}.
\end{coro}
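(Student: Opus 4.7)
The plan is to apply the lower pointwise bound of Theorem~\ref{theo:est-sol} to a suitable additive shift of $u$, combine it with the H\"older estimate, and then convert the resulting control on the Wolff potential into the density condition on $\mu_u$ by monotonicity of $s\mapsto\mu_u(B(x,s))$.

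Fix a ball $B(x,r)$ with $B(x,2r)\Subset\Omega$ and $r<\min\{1,R_\cW/2\}$, and set $v:=u-\inf_{B(x,2r)}u$. Because $u$ is H\"older continuous, the infimum is finite and attained, so $v$ is a nonnegative, continuous, $\opA$-superharmonic function in $B(x,2r)$ with $\mu_v=\mu_u$ (additive constants do not affect $\opA$-superharmonicity nor the generated measure, cf.\ Section~\ref{ssec:A-sh-gen-meas}). Applying the lower bound of Theorem~\ref{theo:est-sol} to $v$ with radius $r$ and using the H\"older estimate,
\[
C_L\bigl(\mathcal{W}^{\mu_u}_G(x,r)-r\bigr)\leq v(x)=u(x)-\inf_{B(x,2r)}u\leq [u]_{C^{0,\theta}(B(x,2r))}\,(2r)^\theta\leq C\,r^\theta.
\]
Since $\theta<1$ and $r<1$, this yields $\mathcal{W}^{\mu_u}_G(x,r)\leq C'\,r^\theta$.

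To extract a pointwise bound on the density from this integral bound, observe that for $s\in[r/2,r]$ monotonicity of the measure together with $s\leq r$ gives $\mu_u(B(x,s))/s^{n-1}\geq \mu_u(B(x,r/2))/r^{n-1}$. Restricting the defining integral of $\mathcal{W}^{\mu_u}_G(x,r)$ to the annulus $[r/2,r]$ and using that $g^{-1}$ is increasing leads to
\[
\mathcal{W}^{\mu_u}_G(x,r)\geq \frac{r}{2}\,g^{-1}\!\left(\frac{\mu_u(B(x,r/2))}{r^{n-1}}\right).
\]
Combining this with the previous bound, applying $g$, and invoking the $\Delta_2$ condition on $g$ (which follows from $G\in\Delta_2\cap\nabla_2$) to absorb the multiplicative constant inside $g$, one obtains $\mu_u(B(x,r/2))\leq C''\,r^{n-1}g(r^{\theta-1})$. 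Renaming $\rho:=r/2$ and using $r^{\theta-1}=2^{\theta-1}\rho^{\theta-1}\leq \rho^{\theta-1}$ (since $\theta<1$) together with $r^{n-1}=2^{n-1}\rho^{n-1}$, the claimed estimate
\[
\mu_u(B(x,\rho))\leq c\,\rho^{n-1}g(\rho^{\theta-1})\simeq \rho^{n-\theta}G(\rho^{\theta-1})
\]
follows.

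The only subtle step is the bookkeeping in the first paragraph: producing a nonnegative $\opA$-superharmonic function with the same generated measure and finite at $x$, so that Theorem~\ref{theo:est-sol} applies directly. This is handled cleanly by the additive shift $v=u-\inf_{B(x,2r)}u$ via the shift-invariance of the $\opA$-superharmonicity property and the identity $\mu_v=\mu_u$. Everything else reduces to monotonicity of $s\mapsto\mu_u(B(x,s))$ and the doubling properties of $G$ and $g$.
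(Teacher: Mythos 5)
Your proof is correct and takes essentially the same route as the paper: shift $u$ by $\inf_{B(x,2r)}u$, invoke the lower bound of Theorem~\ref{theo:est-sol} together with the H\"older estimate, and extract the pointwise density bound by restricting the Wolff integral to a dyadic annulus combined with monotonicity of $s\mapsto\mu_u(B(x,s))$ and the doubling of $g$. The only cosmetic difference is your choice of the annulus $[r/2,r]$ with theorem-radius $r$ and a final renaming $\rho=r/2$, whereas the paper works with $[r,2r]$ and radius $2r$.
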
{}
\begin{proof} 
Set $x\in B(x,r)\subset B(x,2r)\Subset\Omega$ with $r<\min\{1,R_{\mathcal{W}}/2\}$. We observe that $u-\inf_{B(x,2r)} u$ is $\opA$-superharmonic, so we can use lower estimate from Theorem~\ref{theo:est-sol} to conclude with
\begin{flalign*}
g^{-1}\left(\frac{\mu(x,r)}{r^{n-1}}\right)&\leq c\,\barint_r^{2r}g^{-1} \left(\frac{\mu(x,s)}{s^{n-1}}\right)\,ds\leq \frac{c}{r}\big( u(x)-\inf_{B(x,2r)} u+r\big)\leq c r^{\theta-1}
\end{flalign*} equivalent to~\eqref{mu-control}.
\end{proof}

\begin{coro}\label{coro:O-dens} Under {\rm Assumption {\bf (A)}} suppose that $u$ is a nonnegative $\opA$-superharmonic function finite a.e. in $B(x_0,4r)\Subset\Omega$ with $r<\min\{1,R_{\mathcal{W}}/2\}$  in the sense of distributions, and $\mu_{u,\theta}$ satisfies density condition~\eqref{mu-control} with certain $\theta\in(0,1)$. Then for some $c=c(\data,n)$ \[\sup_{B(x_0,r)}u\leq c\Big(\inf_{B(x_0,r)} u +r^\theta\Big)\]
and, consequently, $u$ is locally H\"older continuous.  
\end{coro}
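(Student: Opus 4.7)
The strategy is to combine the upper pointwise bound of Theorem~\ref{theo:est-sol} with the density hypothesis \eqref{mu-control} to dominate the Wolff potential by $r^\theta$. This yields the desired Harnack-type estimate, from which local H\"older continuity follows by a standard oscillation iteration on dyadic balls.

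First I estimate the Wolff potential. The density condition \eqref{mu-control} gives $\mu_u(B(x,s))/s^{n-1} \leq c\, g(s^{\theta-1})$ for all small $s$ and all $x$ in the relevant region. Since $G \in \Delta_2 \cap \nabla_2$ the derivative $g$ is essentially doubling, so $g^{-1}(c\, g(t)) \leq c'\, t$, and therefore
\[
\mathcal{W}_G^{\mu_u}(x, \rho) = \int_0^\rho g^{-1}\!\left(\frac{\mu_u(B(x,s))}{s^{n-1}}\right) ds \leq c'' \int_0^\rho s^{\theta-1}\, ds = \frac{c''}{\theta}\, \rho^\theta.
\]

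I then feed this estimate into the upper bound of Theorem~\ref{theo:est-sol}. For $x \in B(x_0, r)$ the function $u$ is $\opA$-superharmonic on $B(x, 3r) \subset B(x_0, 4r) \Subset \Omega$, so the theorem applies centered at $x$ with a radius $R$ comparable to $r$, chosen large enough that $B(x_0, r) \subset B(x, R)$ so that $\inf_{B(x,R)} u \leq \inf_{B(x_0, r)} u$. Invoking $r < 1$ to absorb the additive $R$ into $r^\theta$ yields
\[
u(x) \leq C_U\!\left(\inf_{B(x,R)} u + \mathcal{W}_G^{\mu_u}(x, R) + R\right) \leq c\!\left(\inf_{B(x_0, r)} u + r^\theta\right),
\]
and taking the supremum in $x \in B(x_0, r)$ proves the first displayed inequality.

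For H\"older continuity I iterate on dyadic balls $B_k = B(x_0, 2^{-k} r)$. Applying the Harnack-type estimate just proven to the nonnegative $\opA$-superharmonic shift $v = u - \inf_{B_k} u$ produces a recursion of the form $\mathrm{osc}_{B_{k+1}} u \leq \lambda\, \mathrm{osc}_{B_k} u + c (2^{-k}r)^\theta$ with $\lambda \in (0,1)$ depending only on $C_U$, and the standard iteration lemma delivers local H\"older continuity with exponent $\min(\theta, \log_2 \lambda^{-1})$.

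The main obstacle is the bookkeeping of radii: Theorem~\ref{theo:est-sol} requires superharmonicity on a ball strictly larger than twice the application radius, while the inf-swap trick needs $R \geq 2r$, and the two conditions fit together only because of the enlarged hypothesis $B(x_0, 4r) \Subset \Omega$ (and possibly after a harmless geometric shrinkage of the target ball). A secondary subtlety is that for nonlinear $\opA$ the function $-u$ is not $\opA$-subharmonic, so the oscillation reduction must be organized one-sidedly through the nonnegative inf-shifts $v$ rather than by the classical linear symmetry argument.
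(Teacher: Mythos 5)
Your argument matches the paper's proof essentially step for step: you bound the Wolff potential by $\rho^\theta$ via the density condition and the doubling of $g^{-1}$ (Lemma~\ref{lem:equivalences}), apply the upper bound of Theorem~\ref{theo:est-sol} at each $x\in B(x_0,r)$ with radius $\approx 2r$ so that $\inf_{B(x,R)}u\leq\inf_{B(x_0,r)}u$, absorb the linear term using $r<1$, take the supremum, and then invoke the classical one-sided oscillation iteration on dyadic balls (the paper cites \cite{hekima,giusti} for exactly this). Your closing remarks on the radius bookkeeping and the non-symmetry of the shift for nonlinear $\opA$ are correct observations that the paper leaves implicit, but they do not change the argument.
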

\begin{proof} 
{Let us fix arbitrary $x\in B(x_0,r)$. Then one has $ B(x_0,r)\subset B(x,2r)\subset B(x_0,4r)$. 
By the upper estimate from Theorem~\ref{theo:est-sol} 
we get 
\begin{flalign*}
 u(x)&\leq C_U\left(\inf_{B(x,2r)}u+ 2r + \int_0^{2r} g^{-1}\left(c\frac{s^{n-1}g(s^{\theta-1})}{s^{n-1}}\right)\,ds\right)\\&
\leq c\left(\inf_{B(x_0,r)}u+ r^\theta\right).
\end{flalign*}
 We take supremum over $B(x_0,r)$ on the both sides of the inequality above and get the inequality from the claim. Further by classical iteration as in \cite[Chapter 6]{hekima} or \cite[Chapter 6]{giusti}, we get H\"older continuity of $u$.}
\end{proof}
Specializing Corollaries~\ref{coro:H-cont} and~\ref{coro:O-dens}, we have the following results.
\begin{rem} \rm Suppose $u$ is a nonnegative $\opA$-superharmonic function satisfying $-\dv \opA(x,Du)=-{ \dv}\left(a(x)|D u|^{p-2} D u \right)=\mu_u$ in the sense of distributions for $p>1$  and $0<a\in L^\infty(\Omega)$  separated from zero. Assume further that $u\in C^{0,\theta}_{loc}(\Omega)$ with certain $\theta\in(0,1)$, then $\mu_u$ satisfies  \begin{equation}
    \label{mu-p}
\mu_u(B(x,r))\leq c r^{n-p+\theta(p-1)}
\end{equation}
for some $c>0,$ $\theta\in(0,1)$ and all sufficiently small $r>0.$ On the other hand, if~\eqref{mu-p}, then $u$ is locally  H\"older continuous.
\end{rem}
\begin{rem} \rm Suppose $u$ is a nonnegative $\opA$-superharmonic function satisfying 
\begin{equation}
    \label{eq-log-mu}-\dv \opA(x,Du)=-{ \dv}\left(a(x)|D u|^{p-2}\log^\alpha({\rm e}+|D u|)  D u \right)=\mu_u
\end{equation}  in the sense of distributions for $p>1,$ $\alpha\in\R$, and $0<a\in L^\infty(\Omega)$  separated from zero. Assume further that $u\in C^{0,\theta}_{loc}(\Omega)$ with certain $\theta\in(0,1)$, then $\mu_u$ satisfies  \begin{equation}
    \label{mu-log}
\mu_u(B(x,r))\leq c r^{n-p+\theta(p-1)}\log^\alpha({\rm e}+r^{\theta-1})
\end{equation}
for some $c>0,$ $\theta\in(0,1)$ and all sufficiently small $r>0.$ On the other hand, if~\eqref{mu-log}, then $u$ is locally  H\"older continuous.
\end{rem}

The sufficient condition for~\eqref{mu-control} and, in turn, for the H\"older continuity of the solution is to assume that $\mu_u=F_u$ belongs to a~relevant Marcinkiewicz-type space $L(\psi,\infty)(\Omega)$. We say that $f\in L(\psi,\infty)(\Omega)$ if the maximal rearrangement $f^{**}$ of $f$ satisfies
\[\sup_{s\in(0,|\Omega|)}\frac{f^{**}(s)}{\psi^{-1}(1/s)}<\infty,\] 
see~\cite{oneil}.

\begin{coro}
\label{coro:O-Marc-dens}  Suppose $u$ is a nonnegative $\opA$-superharmonic function satisfying  $-\dv\opA(x,Du)=\mu_u=F_u$  in the sense of distributions, and $F_u$ belongs locally to the Marcinkiewicz-type space $L(\psi,\infty)(\Omega)$ with $\psi^{-1}(1/\lambda)=\lambda^{-\frac{1}{n}}g\big(\lambda^{\frac{\theta-1}{n}}\big)$ for some $\theta\in(0,1)$, 
then $u$ is locally H\"older continuous. \end{coro}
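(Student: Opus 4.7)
The plan is to reduce the assertion to the density condition~\eqref{mu-control} and then invoke Corollary~\ref{coro:O-dens}. The membership $F_u\in L(\psi,\infty)(\Omega)$ translates, by the very definition of the Marcinkiewicz-type norm, into a pointwise bound on the maximal rearrangement:
\[
F_u^{\ast\ast}(s)\leq C\,\psi^{-1}(1/s)=C\,s^{-1/n}g\bigl(s^{(\theta-1)/n}\bigr)\qquad\text{for all }\ s\in(0,|\Omega_0|),
\]
where $\Omega_0\Subset\Omega$ contains the ball of interest.

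Next, I would use the basic rearrangement inequality to estimate the measure of a ball. For any $B(x,r)$ with $B(x,2r)\Subset\Omega_0$,
\[
\mu_u\bigl(B(x,r)\bigr)=\int_{B(x,r)}F_u(y)\,dy\leq \int_0^{|B(x,r)|}F_u^{\ast}(t)\,dt = |B(x,r)|\,F_u^{\ast\ast}\bigl(|B(x,r)|\bigr).
\]
Plugging in $|B(x,r)|=w_n r^n$ and combining with the previous display gives
\[
\mu_u\bigl(B(x,r)\bigr)\leq w_n r^n\cdot C\,(w_n r^n)^{-1/n}\,g\bigl((w_n r^n)^{(\theta-1)/n}\bigr)\leq c\, r^{n-1}\, g(r^{\theta-1}),
\]
where in the last step I absorb the $w_n$-factor inside the argument of $g$ using the doubling property ($g\in\Delta_2\cap\nabla_2$ as the derivative of $G$), so the constant $c$ only depends on $\data$, $n$, and the Marcinkiewicz norm of $F_u$. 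This is precisely the density condition~\eqref{mu-control}.

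Finally, since $r<\min\{1,R_\cW/2\}$ is required in~\eqref{mu-control}, I restrict $r$ accordingly and apply Corollary~\ref{coro:O-dens} to conclude that $u$ is locally H\"older continuous with exponent $\theta$ in $\Omega_0$. A standard covering argument then extends this locally throughout $\Omega$.

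The only mild obstacle is the compatibility of the dilation factor $w_n$ inside $g$; this is handled routinely by doubling, so there is no genuine technical difficulty. The statement is essentially a dictionary translation between the Marcinkiewicz scale adapted to $\psi$ and the generalized Morrey density~\eqref{mu-control}, followed by the already established Corollary~\ref{coro:O-dens}.
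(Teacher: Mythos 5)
Your proposal is correct and takes essentially the same approach as the paper: both derive the Morrey-type density condition~\eqref{mu-control} from the Marcinkiewicz assumption via the rearrangement inequality (the paper routes this through its display~\eqref{1na8}, you restate the same estimate directly), and then both invoke Corollary~\ref{coro:O-dens}. Your remark on absorbing the $w_n$-factors through the doubling of $g$ is a correct and appropriate detail.
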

\begin{proof}
    By~\eqref{1na8} and the assumption we get that
\[rg^{-1}\left(\frac{F_u(B(x,r))}{r^{n-1}}\right) \leq c r^{\theta},\]
which is equivalent to~\eqref{mu-control}.    By Corollary~\ref{coro:O-dens} we get H\"older continuity of $u$.
\end{proof}
This fact has  the best possible consequence in the $p$-Laplace case.
\begin{rem}\rm \label{rem:p-Marc-dens} If $u$ is a nonnegative $\opA$-superharmonic function in $\Omega$ satisfying $-\dv \opA(x,Du)=-{ \dv}\left(a(x)|D u|^{p-2} D u \right)=F_u$ in the sense of distributions for $p>1$  and $0<a\in L^\infty(\Omega)$  separated from zero, and $F_u$ belongs locally to the Marcinkiewicz space $L(\frac{n}{p+\theta(p-1)},\infty)(\Omega)$  for some $\theta\in(0,1)$, i.e.  $\sup_{\lambda>0}\left(\lambda^\frac{n}{p+\theta(p-1)}\big|\{x\in\Omega_0:\,F_u(x)>\lambda\}|\right)<\infty$ for $\Omega_0\Subset\Omega$, then $u$ is locally H\"older continuous.
\end{rem}
 
\begin{rem}\label{rem:G-Marc-dens} \rm When $G(t)\simeq t^p\log^\alpha({\rm e}+t),$ $1<p<n,$ $\alpha\in\R,$ and $u$ is a~nonnegative $\opA$-superharmonic function satisfying \eqref{eq-log-Fu} and such that \[\sup_{\lambda>0}\left(\lambda^\frac{n}{p+\theta(p-1)}\log^{-\frac{\alpha(1-\theta)}{p+\theta(p-1)}}({\rm e}+\lambda^\frac{n}{1-\theta})\big|\{x\in\Omega_0:\,F_u(x)>\lambda\}|\right)<\infty\] for $\Omega_0\Subset\Omega$, then $u$ is locally H\"older continuous.
\end{rem}

\subsection{Hedberg--Wolff Theorem}\label{ssec:WH}  We present the general growth version of the theorem by Hedberg and Wolff. Let us refer to~\cite{HedWol} for the classical formulation and proof, later proof in~\cite{adams-hedberg}, and the variable exponent version elaborated in~\cite{LuMaMa}. Our proof applies the estimates from Theorem~\ref{theo:est-sol} and is given in Section~\ref{sec:W-H}.

\begin{theo}\label{theo:W-H}
Let  $\mu$ be a nonnegative bounded Radon measure compactly supported in bounded open set  $\Omega\subset\rn$. Then \begin{equation}\label{mu-in-dual} \mu\in (W^{1,G}_0(\Omega))' \end{equation} if and only if
\begin{equation} \label{potential-bounded} \int_\Omega \mathcal{W}^\mu_G(x,R)\,d\mu(x)<\infty\quad\text{for some }\ R>0.\end{equation}
\end{theo}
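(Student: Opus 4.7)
The plan is to prove both implications by combining Theorem~\ref{theo:est-sol} with a test-function computation. The forward direction reduces to a one-step argument based on the lower Wolff bound; the reverse requires an approximation procedure driven by the upper bound, and this is the more delicate part.

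\emph{Forward direction.} Assume $\mu\in(W^{1,G}_0(\Omega))'$. Monotone operator theory (Minty--Browder, applicable because $G\in\Delta_2\cap\nabla_2$ makes $W^{1,G}_0(\Omega)$ reflexive and separable while $\opA$ is monotone with Orlicz coercivity and growth) produces a weak solution $u\in W^{1,G}_0(\Omega)$ of $-\dv\opA(x,Du)=\mu$ with zero boundary values. Since $\mu\geq 0$, the comparison principle forces $u\geq 0$, so $u$ is $\opA$-superharmonic with $\mu_u=\mu$. Testing with $u$ and using Assumption~{\bf (A)}:
\[c_1^\opA\int_\Omega G(|Du|)\,dx \leq \int_\Omega\opA(x,Du)\cdot Du\,dx = \langle\mu,u\rangle \leq \|\mu\|_{(W^{1,G}_0)'}\|u\|_{W^{1,G}_0}.\]
Since $\supp\mu\Subset\Omega$, for $R$ sufficiently small every $x\in\supp\mu$ has $B(x,R_\cW)\Subset\Omega$, so the lower estimate $C_L(\mathcal{W}^\mu_G(x,R)-R)\leq u(x)$ of Theorem~\ref{theo:est-sol} applies. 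Integrating against $\mu$ yields
\[\int_\Omega\mathcal{W}^\mu_G(x,R)\,d\mu(x) \leq \tfrac{1}{C_L}\langle\mu,u\rangle + R\,\mu(\Omega) < \infty.\]

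\emph{Reverse direction.} Assume $\int_\Omega\mathcal{W}^\mu_G(x,R_0)\,d\mu<\infty$ for some $R_0>0$. I would mollify $\mu_k=\mu*\rho_{1/k}\in C_c^\infty(\Omega')$ on a slightly larger domain $\Omega'\supset\overline{\Omega}$, so that the Dirichlet problem $-\dv\opA(x,Du_k)=\mu_k$ admits a nonnegative weak solution $u_k\in W^{1,G}_0(\Omega')$. Extending $u_k$ by $0$ to a large ball and choosing $R\geq\mathrm{diam}(\supp\mu)+\mathrm{dist}(\supp\mu,\partial\Omega')$, one has $\inf_{B(x,R)}u_k=0$ for every $x\in\supp\mu_k$ and $k$ large, so the upper estimate of Theorem~\ref{theo:est-sol} gives $u_k(x)\leq C_U(\mathcal{W}^{\mu_k}_G(x,R)+R)$ on $\supp\mu_k$. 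Testing with $u_k$ yields
\[c_1^\opA\int G(|Du_k|)\,dx \leq \int u_k\,d\mu_k \leq C_U\int\mathcal{W}^{\mu_k}_G(x,R)\,d\mu_k + C_U R\,\mu_k(\Omega).\]
Once the right-hand side is uniformly bounded, $\{u_k\}$ is bounded in $W^{1,G}_0(\Omega')$; a standard monotone-operator weak-limit argument identifies the limit $u$ as a weak solution for $\mu$, and Young's inequality for the complementary pair $(G,\widetilde G)$ together with $\widetilde G(g(t))\lesssim G(t)$ gives, for every $\phi\in C_c^\infty(\Omega)$,
\[|\langle\mu,\phi\rangle| = \Big|\int\opA(x,Du)\cdot D\phi\,dx\Big| \lesssim \int G(|Du|)\,dx + \int G(|D\phi|)\,dx,\]
so $\mu$ extends to a bounded functional on $W^{1,G}_0(\Omega)$.

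\emph{Main obstacle.} The crux is the uniform comparison $\int\mathcal{W}^{\mu_k}_G(x,R)\,d\mu_k(x)\leq C\int\mathcal{W}^\mu_G(x,R)\,d\mu(x)+O(1)$. Writing the Wolff integrals as dyadic sums $\int_0^R g^{-1}(\mu(B(x,r))/r^{n-1})\,dr\simeq\sum_j R2^{-j}g^{-1}(\mu(B(x,R2^{-j}))/(R2^{-j})^{n-1})$, Fubini reduces the problem to pointwise estimates of the type $\mu_k(B(x,r))\lesssim\mu(B(x,r+1/k))$ and a symmetric lower comparison, after which the $\Delta_2\cap\nabla_2$ conditions on $G$ (equivalently on $g^{-1}$) absorb the $1/k$-shifts in $r$ at dyadic scales bounded away from $0$; the small-scale portion is controlled because $\supp\mu$ is compact and the integral $\int_0 g^{-1}(c/r^{n-1})\,dr$ is absorbed into the additive remainder. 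This is the only place where the structural hypotheses on $G$ play a genuinely nontrivial role, and it is also where alternative approximations (e.g.\ $\mu_k=\mathbf{1}_{K_k}\mu$ for an exhaustion $K_k\nearrow\supp\mu$) trade the potential comparison, which is then trivial, for the cost of having to invoke SOLA theory and the upper Wolff bound once more in order to secure $u_k\in W^{1,G}_0$.
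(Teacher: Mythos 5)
Your forward direction is essentially identical to the paper's Step 2 (solve for $u\in W^{1,G}_0$, test with $u$, apply the lower Wolff bound, integrate against $\mu$), and it is correct. The reverse direction, however, has two genuine gaps that the mollification approach does not repair.

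First, to kill the $\inf_{B(x,R)}u_k$ term you propose taking $R\geq\mathrm{diam}(\supp\mu)+\mathrm{dist}(\supp\mu,\partial\Omega')$, but Theorem~\ref{theo:est-sol} only holds for $R<R_\cW/2$ where $R_\cW$ is a \emph{small} radius subject to the Harnack constraint $R<R_H(n)$ and the modular smallness $\vr_{g,B_{3R}}(|Du|)\leq 1$; in addition, the zero-extension of $u_k$ outside $\Omega'$ is no longer an $\opA$-supersolution across $\partial\Omega'$, so the upper estimate cannot be invoked on the enlarged ball. The paper instead obtains a uniform bound on $\inf_{B}u_j$ from the a~priori $\vr_{g}$-level-set estimates for approximable solutions in \cite{CiMa} (the function $\zeta(|E|)$ controlling $\int_E g(u_j)\,dx$ independently of $j$), combined with $\inf_B u_j=g^{-1}(\inf_B g(u_j))\lesssim g^{-1}(\barint_B g(u_j))$. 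Without a substitute for this you have a circular dependence: you need $\inf u_k$ bounded to get the energy bound, but you do not yet control $u_k$.

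Second, the comparison $\int\mathcal{W}^{\mu_k}_G(x,R)\,d\mu_k(x)\lesssim\int\mathcal{W}^\mu_G(x,R)\,d\mu(x)+O(1)$ is not merely technical. Even granting the pointwise bound $\mathcal{W}^{\mu_k}_G(x,R)\lesssim\mathcal{W}^\mu_G(x,2R)$ (which does follow from $\mu_k(B(x,r))\leq\mu(B(x,r+1/k))$ plus $\Delta_2$/$\nabla_2$), the remaining step requires controlling $\int\mathcal{W}^\mu_G(x,2R)\,d\mu_k(x)=\int(\mathcal{W}^\mu_G(\cdot,2R)\ast\rho_{1/k})(y)\,d\mu(y)$ by $\int\mathcal{W}^\mu_G(y,CR)\,d\mu(y)$, and this smoothing of the \emph{potential} (rather than the measure) lets mass from near a $\mu$-concentration point at scale $\lesssim1/k$ enter the integral in an uncontrolled way. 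Your proposed absorption ``$\int_0 g^{-1}(c/r^{n-1})\,dr$ into the additive remainder'' only works when that integral converges at $0$, i.e.\ precisely when \eqref{int-div} holds — and in that case all solutions are bounded and the theorem is classical anyway. The paper's choice $\mu_j=\mathds{1}_{K_j}\mu$ with $K_j=\{\cW^\mu_G(\cdot,R)\leq j\}$ makes the Wolff comparison trivial ($\mu_j\leq\mu$ implies $\cW^{\mu_j}_G\leq\cW^\mu_G$), and simultaneously gives a cheap a~priori bound $u_j\lesssim j$ on $K_j$ directly from the upper Wolff estimate, bypassing both difficulties at once. You already noticed this alternative in your closing remark; in fact it is the \emph{only} of your two proposed routes that closes, and the ``cost'' of invoking SOLA theory that you flag is exactly what the paper pays.
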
 
\section{Preliminaries}\label{sec:prelim}

\subsection{Notation}
In the following we shall adopt the customary convention of denoting by $c$ a constant that may vary from line to line. Sometimes to skip rewriting a constant, we use $\lesssim$. By $a\simeq b$, we mean $a\lesssim b$ and $b\lesssim a$. By $B_R$ we shall denote a ball usually skipping prescribing its center, when {it} is not important. Then by $cB_R=B_{cR}$ we mean then a ball with the same center as $B_R$, but with rescaled radius $cR$. We make use of symmetric truncation on level $k>0$,  $T_k:\R\to\R$, defined as follows 
\begin{equation}\label{Tk}T_k(s)=\left\{\begin{array}{ll}s & |s|\leq k,\\
k\frac{s}{|s|}& |s|\geq k.
\end{array}\right. 
\end{equation}
 With $U\subset \mathbb{R}^{n}$ being a~measurable set with finite and positive $n$-dimensional Lebesgue measure $|U|>0$, and with $f\colon U\to \mathbb{R}^{k}$, $k\ge 1$ being a measurable map, by
\begin{flalign*}
\barint_{U}f(x) \, dx =\frac{1}{|U|}\int_{U}f(x) \,dx
\end{flalign*}
we mean the integral average of $f$ over $U$. 
By $C^{0,\gamma}(U)$, $\gamma \in (0,1]$, we mean the family of H\"older continuous functions, i.e. those measurable functions $f\colon U\to \mathbb{R}$ for which
\begin{flalign*}
[f]_{0,\gamma}:=\sup_{\substack{x,y\in U,\\x\not =y}}\frac{|f(x)-f(y)|}{|{x-y}|^{\gamma}}<\infty.
\end{flalign*} 

\subsection{Basic definitions} References for this section {are~\cite{rao-ren,KR}}. 
 
We say that a function $G: [0, \infty) \to [0, \infty)$ is an $N$-function if it is convex, vanishes only at $0$, and satisfies the
additional growth conditions
\begin{equation*}\lim _{t \to
0}\frac{G (t)}{t}=0 \qquad \hbox{and} \qquad \lim _{t \to \infty
}\frac{G (t)}{t}=\infty \,.
\end{equation*} 

 The  complementary~function $\wt{G}$  (called also the Young conjugate, or the Legendre transform) to a nondecreasing function $G:\rp\to\rp$  is given by the following formula
\[\wt{G}(s):=\sup_{t>0}(s\cdot t-G(t)).\]

In the general growth case Young's inequality reads as inequality\begin{equation}
\label{in:Young} ts\leq G(t)+\wt{G}(s)\quad\text{for all }\ s,t\geq 0.
\end{equation}

 We say that a function $G:\rp\to\rp$ satisfies $\Delta_2$-condition if there exist $c_{\Delta_2},t_0>0$ such that $G(2t)\leq c_{\Delta_2}G(t)$ for $t>t_0.$ It describes the speed and {the} regularity of the growth. We say that $G$ satisfy $\nabla_2$-condition if $\wt{G}\in\Delta_2.$ 
Note that it is possible that $G$ satisfies only one of the conditions $\Delta_2/\nabla_2$. For instance, when $G(t) = (1+|t|)\log(1+|t|)-|t|$, its complementary function is  $\widetilde{G}(s)= \exp(|s|)-|s|-1$. Then $G\in\Delta_2$ and   $\widetilde{G}\not\in\Delta_2$.

See~\cite[Section~2.3, Theorem~3]{rao-ren} for equivalence of various definitions of this condition. In particular, $G\in\Delta_2\cap\nabla_2$ if and only if
\begin{equation}\label{iG-sG} 
1<  i_G=\inf_{t>0}\frac{tg(t)}{G(t)}\leq \sup_{t>0}\frac{tg(t)}{G(t)}=s_G<\infty,\end{equation}
where $g(t)=G'(t)$. This assumption implies a comparison with power-type functions i.e.
\begin{equation}\label{comp-i_G-s_G} 
\frac{G(t)}{t^{i_G}}\quad\text{is non-decreasing}\qquad\text{and}\qquad\frac{G(t)}{t^{s_G}}\quad\text{is non-increasing}.
\end{equation} 
 
 \begin{lem}\label{lem:equivalences}
 If {an $N$-function} $G\in\Delta_2\cap\nabla_2$, then \begin{flalign*} g(t)t\simeq G(t)
 \qquad\text{and}\qquad \wt G(g(t))\simeq G(t)
 \end{flalign*}
 with the constants depending only on the growth indexes of $G$, that is $i_G$ and $s_G$.  Moreover,  $g^{-1}(2t)\leq c g^{-1}(t)$ with $c=c(i_G,s_G).$
 \end{lem}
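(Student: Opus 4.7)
The plan is to derive all three claims directly from the definition of the indices $i_G,s_G$ in \eqref{iG-sG} together with the standard Legendre-duality identity. I expect no genuine obstacle; each step reduces to an elementary manipulation, and the only point requiring a moment's care is the identification $g^{-1}=(\wt G)'$ in the last step, which uses strict convexity of $G$ (guaranteed by $i_G>1$).

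First I would observe that the equivalence $g(t)t\simeq G(t)$ is nothing but a reformulation of \eqref{iG-sG}: by the very definition of the indices,
\[ i_G\, G(t)\;\le\; t\,g(t)\;\le\; s_G\, G(t)\qquad\text{for every }t>0, \]
which gives both sides with constants depending only on $i_G$ and $s_G$.

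Next, for $\wt G(g(t))\simeq G(t)$, I would invoke the Young identity. Since $G$ is convex, lies in $C^1((0,\infty))$ and $g=G'$, the supremum in $\wt G(s)=\sup_{t>0}(st-G(t))$ is attained at $t=g^{-1}(s)$, so $\wt G(g(t))=t\,g(t)-G(t)$. Combined with the preceding step this yields
\[ (i_G-1)\,G(t)\;\le\;\wt G(g(t))\;\le\;(s_G-1)\,G(t), \]
and both constants are strictly positive because $i_G>1$ by \eqref{iG-sG}.

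Finally, for the doubling inequality $g^{-1}(2t)\le c\,g^{-1}(t)$, I would leverage the $\nabla_2$-condition, i.e.\ $\wt G\in\Delta_2$. Strict convexity of $G$ (a consequence of $i_G>1$) makes $g$ strictly increasing, and standard Legendre theory then identifies $g^{-1}=(\wt G)'$. Since $G\in\Delta_2\cap\nabla_2$ transfers to $\wt G\in\Delta_2\cap\nabla_2$ with finite indices $i_{\wt G},s_{\wt G}>1$, applying the first step to the pair $(\wt G,(\wt G)')$ gives $t\,g^{-1}(t)\simeq \wt G(t)$. Together with the $\Delta_2$-property of $\wt G$ this yields
\[ 2t\,g^{-1}(2t)\;\simeq\;\wt G(2t)\;\le\; c_{\Delta_2}(\wt G)\,\wt G(t)\;\simeq\; t\,g^{-1}(t), \]
and dividing by $2t$ delivers the claim with $c=c(i_G,s_G)$.
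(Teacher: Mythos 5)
Your proof is correct; the paper states this lemma without proof, citing standard Orlicz-space references (Rao--Ren, Krasnosel'skii--Ruticki\u{\i}), and your argument is essentially the textbook derivation made self-contained. Step 1 is a direct reading of \eqref{iG-sG}; Step 2 correctly applies the Legendre/Young identity $\wt G(g(t))=tg(t)-G(t)$ (the supremum being attained at $\tau=t$ by concavity of $\tau\mapsto g(t)\tau-G(\tau)$), yielding $(i_G-1)G(t)\le\wt G(g(t))\le(s_G-1)G(t)$; Step 3 correctly uses $g^{-1}=(\wt G)'$ and the duality $i_{\wt G}=s_G'$, $s_{\wt G}=i_G'$ to apply Step 1 to $\wt G$, then the $\Delta_2$-bound $\wt G(2t)\le 2^{s_{\wt G}}\wt G(t)$ (whose constant depends only on $i_G$), giving the doubling of $g^{-1}$ with a constant controlled by $i_G,s_G$.

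One small imprecision worth flagging: your parenthetical ``strict convexity of $G$ (guaranteed by $i_G>1$)'' is not quite right. The condition $i_G>1$ alone does not preclude $g=G'$ from being constant on a bounded interval (one can construct a piecewise $C^1$ $N$-function with a linear segment that still has $i_G>1$), so strict monotonicity of $g$, and hence invertibility of $g$, is really an additional standing assumption rather than a consequence. The paper itself makes the same implicit assumption (it asserts ``within our regime $g=G'$ is strictly increasing'' and uses $g^{-1}$ throughout, starting from the definition of the Wolff potential), so your proof is on the same footing as the paper; just be aware that this invertibility should be treated as part of the hypotheses, not derived from $i_G>1$. Everything else goes through as written, and an alternative to Step 3 that avoids duality is to use \eqref{comp-i_G-s_G} and Step 1 to get $g(\lambda s)\gtrsim\lambda^{i_G-1}g(s)$ for $\lambda\ge 1$, then choose $\lambda=c(i_G,s_G)$ large enough that $g(\lambda s)\ge 2g(s)$; your duality route is equally clean.
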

 \subsection{Orlicz spaces}
 Basic reference  for this section is~\cite{adams-fournier}. 
 
We study the solutions to PDEs in the Orlicz-Sobolev spaces equipped with a modular function $G\in C^1 {((0,\infty))}$ - a strictly increasing and convex function   such that $G(0)=0$ and satisfying~\eqref{iG-sG}. Let us define a modular \begin{equation}
    \label{modular}
\vr_{G,U}(u)=\int_U G(|u|)\,dx.
\end{equation}
 
\begin{defi}\label{def:OrSob:sp} 
For any bounded $\Omega\subset\rn$, by Orlicz space ${L}^G(\Omega)$  we understand the space of measurable functions endowed with the Luxemburg norm 
\[||f||_{L^G(\Omega)}=\inf\left\{\lambda>0:\ \ \vr_{G,\Omega}\left( \tfrac{1}{\lambda} |f|\right)\leq 1\right\}.\]
 We define the Orlicz-Sobolev space  $W^{1,G}(\Omega)$  as follows
\begin{equation*} 
W^{1,G}(\Omega)=\big\{f\in W^{1,1}_{loc}(\Omega):\ \ |f|,|D f|\in L^G(\Omega)\big\},
\end{equation*}where the gradient is understood in the distributional sense, endowed with the norm
\[
\|f\|_{W^{1,G}(\Omega)}=\inf\bigg\{\lambda>0 :\ \    \vr_{G,\Omega}\left( \tfrac{1}{\lambda} |f|\right)+ \vr_{G,\Omega}\left( \tfrac{1}{\lambda} |Df|\right)\leq 1\bigg\} 
\]
and  by $W_0^{1,G}(\Omega)$ we denote the closure of $C_0^\infty(\Omega)$ under the above norm. 
\end{defi} 

\begin{lem}\label{lem:doubling-norm}For any $N$-function $G$ we have that $\|f\|_{L^G(\Omega)}\leq \vr_{G,\Omega}(|f|)+1$. If additionally $G\in\Delta_2\cap\nabla_2$, then $\vr_{G,\Omega}\left(|f|\right)$ is bounded if and only if $f\in {L^G(\Omega)}$.
\end{lem}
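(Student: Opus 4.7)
The lemma asserts two facts: (i) the Luxemburg norm is controlled by the modular plus one for any $N$-function, and (ii) under $\Delta_2\cap\nabla_2$, finiteness of the modular is equivalent to membership in $L^G(\Omega)$. My plan is to deduce (i) directly from the convexity of $G$ together with $G(0)=0$, and then split (ii) into two implications, only one of which needs the growth restriction $\Delta_2\cap\nabla_2$.

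\textbf{Proof of (i).} I would first dispose of the trivial cases. If $\vr_{G,\Omega}(|f|)=0$ then $f=0$ a.e.\ since $G$ vanishes only at $0$, so both sides are comparable to $0$; if $\vr_{G,\Omega}(|f|)=+\infty$ the inequality is vacuous. Assume now $0<\vr_{G,\Omega}(|f|)<\infty$ and set $\lambda:=\vr_{G,\Omega}(|f|)+1>1$. By convexity of $G$ together with $G(0)=0$, writing $t/\lambda=\tfrac1\lambda\cdot t+(1-\tfrac1\lambda)\cdot 0$ yields
\[
G\!\left(\tfrac{t}{\lambda}\right)\le \tfrac{1}{\lambda}G(t)\qquad\text{for every }t\ge 0,
\]
since $\tfrac1\lambda\in(0,1)$. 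Integrating over $\Omega$ gives $\vr_{G,\Omega}(|f|/\lambda)\le \vr_{G,\Omega}(|f|)/\lambda<1$, which shows that $\lambda$ is admissible in the defining infimum of the Luxemburg norm, hence $\|f\|_{L^G(\Omega)}\le\lambda=\vr_{G,\Omega}(|f|)+1$.

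\textbf{Proof of (ii).} The implication ``$\vr_{G,\Omega}(|f|)<\infty\Rightarrow f\in L^G(\Omega)$'' is immediate from (i) and requires no additional hypothesis on $G$. For the converse, assume $G\in\Delta_2\cap\nabla_2$ and $f\in L^G(\Omega)$, so that $M:=\|f\|_{L^G(\Omega)}<\infty$. Pick any $\lambda>\max\{M,1\}$; by definition of the Luxemburg norm we then have $\vr_{G,\Omega}(|f|/\lambda)\le 1$. By \eqref{comp-i_G-s_G} the function $t\mapsto G(t)/t^{s_G}$ is non-increasing, so $G(\lambda s)\le \lambda^{s_G}G(s)$ for every $s\ge 0$ and every $\lambda\ge 1$; applying this pointwise with $s=|f|/\lambda$ and integrating gives
\[
\vr_{G,\Omega}(|f|)=\int_\Omega G\!\left(\lambda\cdot\tfrac{|f|}{\lambda}\right)dx\le \lambda^{s_G}\int_\Omega G\!\left(\tfrac{|f|}{\lambda}\right)dx\le \lambda^{s_G}<\infty.
\]
This closes the equivalence.

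\textbf{Main subtlety.} There is no deep obstacle here; the only care required is in (i), where one must pick $\lambda>1$ so that the convex combination argument forces $G(t/\lambda)\le G(t)/\lambda$ (which is why the ``$+1$'' is present in the statement), and in the reverse implication of (ii), where $\nabla_2$ is only needed to guarantee the quantitative growth bound $G(\lambda s)\le \lambda^{s_G}G(s)$ via \eqref{comp-i_G-s_G}; the $\Delta_2$-condition alone would suffice up to iteration, but invoking \eqref{comp-i_G-s_G} makes the estimate one-line.
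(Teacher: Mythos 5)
Your proof is correct. The paper states this lemma without proof (it is a standard fact, with \cite{adams-fournier,rao-ren} cited as references for the section), so there is no argument of the authors to compare against; your route — convexity plus $G(0)=0$ to make $\lambda=\vr_{G,\Omega}(|f|)+1$ admissible in the Luxemburg infimum, and the power-type comparison \eqref{comp-i_G-s_G} for the reverse implication — is exactly the standard one. One small quibble with your closing aside: the bound $G(\lambda s)\le\lambda^{s_G}G(s)$ rests on $s_G<\infty$, which by \eqref{iG-sG} is the $\Delta_2$-condition, not $\nabla_2$ (the latter corresponds to $i_G>1$); you in fact say a moment later that $\Delta_2$ alone suffices, so the remark is internally inconsistent, but this does not affect the proof since the lemma assumes $G\in\Delta_2\cap\nabla_2$ anyway.
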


The counterpart of the H\"older inequality in this setting reads \begin{equation}
\label{in:Hold} \|fg\|_{L^1(\Omega)}\leq 2\|f\|_{L^G(\Omega)}\|g\|_{L^{\wt{G}}(\Omega)}\quad\text{for all }\ f\in L^G(\Omega),\ g\in L^{\wt{G}}(\Omega).
\end{equation}

\begin{rem}\rm  \cite{adams-fournier} Since condition~\eqref{iG-sG} imposed on $G$ implies $G,\wt{G}\in\Delta_2$, the Orlicz-Sobolev space $W^{1,G}(\Omega)$ we deal with is separable and reflexive.
\end{rem}

\begin{prop}[Modular Poincar\'e inequality, \cite{Baroni-Riesz,MSZ}]  \label{prop:Poincare} 
For {an $N$-function} $G\in {C^1((0,\infty))}$, such that $G\in \Delta_2 \cap \nabla_2 $, there exists a constant $c=c(n,i_G,s_G)$, such that
\[\int_{B_R} G\left(\frac{|f|}{R}\right)\,dx\leq c \int_{B_R} G\left( |Df|\right)\,dx\quad\text{for every $f\in W^{1,G}_0(B_R).$}\]
\end{prop}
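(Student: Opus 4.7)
The plan is to reduce the modular Poincar\'e inequality to a pointwise representation by integration along rays, and then exploit convexity of $G$ via Jensen's inequality together with $\Delta_2$ to absorb the resulting scaling factor. First, by density of $C_0^\infty(B_R)$ in $W^{1,G}_0(B_R)$ (built into Definition~\ref{def:OrSob:sp} and compatible with the modular thanks to $G\in\Delta_2\cap\nabla_2$), it suffices to establish the estimate for $f\in C_0^\infty(B_R)$; extend $f$ and $\nabla f$ by zero to all of $\rn$. For $x\in B_R$ and $\omega\in S^{n-1}$ the ray $t\mapsto x+t\omega$ exits $\mathrm{supp}\,f$ within distance at most $2R$, so the fundamental theorem of calculus gives $|f(x)|\le \int_0^{2R}|\nabla f(x+t\omega)|\,dt$. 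Averaging over $\omega\in S^{n-1}$ and normalizing the inner integral yields
\[
\frac{|f(x)|}{2R}\leq \barint_{S^{n-1}}\barint_0^{2R}|\nabla f(x+t\omega)|\,dt\,d\omega.
\]

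Second, since the right-hand side is the integral of $|\nabla f(x+t\omega)|$ against a probability measure on $S^{n-1}\times[0,2R]$ and $G$ is convex, Jensen's inequality gives
\[
G\!\left(\frac{|f(x)|}{2R}\right)\leq \barint_{S^{n-1}}\barint_0^{2R} G\!\left(|\nabla f(x+t\omega)|\right)\,dt\,d\omega.
\]
Integrating over $x\in B_R$, exchanging the order of integration by Fubini, and for each fixed $(t,\omega)$ translating $y=x+t\omega$ produces
\[
\int_{B_R}G\!\left(\frac{|f(x)|}{2R}\right)dx\leq \barint_{S^{n-1}}\barint_0^{2R}\int_{\rn} G(|\nabla f(y)|)\,dy\,dt\,d\omega=\int_{B_R}G(|\nabla f|)\,dy,
\]
where the last step uses that $\nabla f$ vanishes outside $B_R$ (so the translated ball can harmlessly be enlarged to $\rn$), after which the inner integral is independent of $(t,\omega)$ and the double probability average collapses.

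Third, the normalization is adjusted from $|f|/(2R)$ to $|f|/R$ by invoking the doubling of $G$: by \eqref{comp-i_G-s_G} the function $G(t)/t^{s_G}$ is non-increasing, hence $G(2s)\leq 2^{s_G}G(s)$ for every $s>0$, so $G(|f|/R)\leq 2^{s_G}G(|f|/(2R))$. Combined with the previous display this yields the claimed inequality with constant $c\leq 2^{s_G}$ (the stated dependence on $n$ and $i_G$ is allowed but not actually required in this argument). The only subtle bookkeeping is the Fubini/change-of-variables step, where the vanishing of $\nabla f$ outside $B_R$ lets one replace the integral over the translated ball by one over $\rn$; the rest of the argument is routine. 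Passage to the limit from $C_0^\infty$ to $W^{1,G}_0$ is justified by Lemma~\ref{lem:doubling-norm} together with the $\Delta_2$ property, which ensures that the modular $\varrho_{G,B_R}$ is continuous along sequences converging in $W^{1,G}_0(B_R)$.
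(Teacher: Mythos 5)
Your proof is correct. Note that the paper itself offers no proof of Proposition~\ref{prop:Poincare}; it is stated with a citation to \cite{Baroni-Riesz,MSZ}, so your argument is a self-contained substitute rather than a variant of an in-paper one. Each step checks out: the ray bound $|f(x)|\le\int_0^{2R}|\nabla f(x+t\omega)|\,dt$ holds for $f\in C_0^\infty(B_R)$ because $x+t\omega\notin B_R$ once $t>2R$; Jensen's inequality is legitimately applied to a probability measure on $S^{n-1}\times[0,2R]$; the Fubini/translation step is harmless precisely because $G(0)=0$ for an $N$-function, so the translated domain may be enlarged to $\rn$ at no cost; and $G(2s)\le 2^{s_G}G(s)$ follows from \eqref{comp-i_G-s_G}, yielding the explicit constant $2^{s_G}$ (which depends on fewer parameters than the statement allows — that is fine). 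The attractive feature of your route is that you keep the ray integral as a one-dimensional average instead of converting it to the Riesz potential $\int|\nabla f(y)|\,|x-y|^{1-n}\,dy$, which would force you to prove a modular bound for the truncated Riesz potential on $L^G$ — the heavier machinery used in parts of the cited literature. The one place deserving a little more care is the closing density passage: under $\Delta_2$, norm convergence implies modular convergence, so one applies Fatou on the left-hand side along an a.e.\ convergent subsequence and a generalized dominated convergence argument on the right-hand side; your one-line justification is acceptable, but this is the only point where $\Delta_2$ is genuinely invoked ($\nabla_2$ is not needed anywhere in your argument).
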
 
{ For other modular Poincar\'e inequalities in the framework of Orlicz spaces see e.g \cite[Example  3.1]{FioGia}, \cite{GP}, \cite[Lemma 2.2]{lieb}. }
\begin{rem}\label{rem:gen-grad}\rm
If $T_k u \in W^{1,G}(\Omega)$ for every $k>0$, then there exists a unique measurable function $Z_u:\Omega\to\rn$ such that $D (T_{{k}}(u))=\mathds{1}_{\{|u|<{{k}}\}}Z_u$ a.e. in $\Omega$,  for every ${k},$
see \cite[Lemma~2.1]{bbggpv}. For $u\in W^{1,G}(\Omega)$, we have $Z_u=D u$ a.e. in $\Omega$. Thus,  we call $Z_u$ the generalized gradient of $u$ and, abusing the notation, for $u$ such that $T_k u \in W^{1,G}(\Omega)$ for every $k>0$ we write simply $D u$ instead of $Z_u$. In all cases this notation means $Du=\lim_{k\to\infty}D(T_k u).$
\end{rem}

\subsection {The operator} We notice that in such regime  the operator   $\mathfrak{A}_{G}$ acting as
\begin{flalign*}
\langle\mathfrak{A}_{G}u,\phi\rangle:=\int_{\Omega}\opA(x,Du)\cdot D\phi \, dx\quad \text{for}\quad \phi\in C^{\infty}_{0}(\Omega)
\end{flalign*}
is well defined on a reflexive and separable Banach space $W^{1,G}(\Omega)$ and $\mathfrak{A}_{G}(W^{1,G}(\Omega))\subset (W^{1,G}(\Omega))'$. Indeed, when $u\in W^{1,G}(\Omega)$ and $\phi\in C_0^\infty(\Omega)$, growth conditions~\eqref{ass-op},  H\"older's inequality~\eqref{in:Hold}, and Lemma~\ref{lem:equivalences}  justify that
\begin{flalign*}
|{\langle \mathfrak{A}_{G}u,\phi \rangle}|\le &\,c\int_{\Omega}\frac{G(|{Du}|)}{|{Du}|}{|D\phi|} \, dx \le c\left \| \frac{G(|{Du}|)}{|{Du}|}\right \|_{L^{\wt G(\cdot)}(\Omega)}\|{D\phi}\|_{L^{G}(\Omega)}\nonumber \\
\le &\, c\|{Du}\|_{L^{ G}(\Omega)}\|{D\phi}\|_{L^{G}(\Omega)}\le c\|{\phi}\|_{W^{1,G}(\Omega)}.
\end{flalign*}

\subsection{Solutions, approximable solutions, $\opA$-supersolutions and $\opA$-harmonic functions}
\label{ssec:sols} 

All the problems are considered under {\rm Assumption {\bf (A)}}, see Section~\ref{sec:formulation-n-reg-cons}. 
A \underline{continuous} function $u\in W^{1,G}_{loc}(\Omega)$ is an {\em $\opA$-harmonic} function in an open set $\Omega$ if it is a (weak) solution to the equation $
-\dv\opA(x,Du)= 0$, i.e.
\[\int_\Omega \opA(x,Du)\cdot D\phi\,dx=0\quad\text{for every }\ \phi\in C^\infty_0(\Omega).\]
As a direct consequence of \cite[Theorem~2]{ChKa} we infer the following. 
\begin{prop}[Existence of $\opA$-harmonic functions] \label{prop:ex-Ahf} Under {\rm Assumption {\bf (A)}} if $\Omega$ is bounded and $w\in W^{1,G}(\Omega)\cap C(\Omega)$, then there exists a unique solution $u\in W^{1,G}(\Omega)\cap C(\Omega)$ to problem
\begin{equation*}
\begin{cases}-\dv\, \opA(x,Du)= 0\quad\text{in }\ \Omega,\\
u-w\in W_0^{1,G}(\Omega).\end{cases}
\end{equation*}
Moreover, for every $E\Subset\Omega$ we have $\quad\|u\|_{L^\infty(E)}\leq c(\data, \|Du\|_{L^{G}(\Omega)}).$
\end{prop}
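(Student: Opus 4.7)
The proposition is presented as a consequence of \cite[Theorem~2]{ChKa}, so my plan is to point out precisely how the three claims (solvability, uniqueness, and the interior regularity together with the $L^\infty$ bound) reduce to standard monotone-operator theory plus the cited regularity result. Throughout, I set $v = u - w$ and work with the shifted operator $T\colon W_0^{1,G}(\Omega)\to (W_0^{1,G}(\Omega))'$ defined by $\langle Tv,\phi\rangle = \int_\Omega \opA(x, Dv+Dw)\cdot D\phi\,dx$ for $\phi\in W_0^{1,G}(\Omega)$; the estimate recalled in Section~3.4 (using H\"older's inequality \eqref{in:Hold} and $|\opA(x,\xi)|\le c_2^\opA g(|\xi|)$) shows that $T$ takes values in the dual.

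For existence and uniqueness, I would check the Browder--Minty hypotheses for $T$. Strict monotonicity of $\xi\mapsto \opA(x,\xi)$ in Assumption~{\bf (A)} transfers directly to $T$ and yields uniqueness. Coercivity follows from the lower bound $\opA(x,\xi)\cdot\xi\ge c_1^\opA G(|\xi|)$ applied to the test $\phi = v$ and the modular Poincar\'e inequality of Proposition~\ref{prop:Poincare}: after absorbing the $Dw$-contribution via Young's inequality \eqref{in:Young}, one obtains $\langle Tv,v\rangle\ge c\,\vr_{G,\Omega}(|Dv|) - c'$, and then the $\Delta_2\cap\nabla_2$ hypothesis, through Lemma~\ref{lem:doubling-norm}, lets one pass from the modular to the norm and conclude $\langle Tv,v\rangle/\|v\|_{W^{1,G}(\Omega)}\to\infty$. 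Hemicontinuity of $T$ is immediate from the Carath\'eodory dependence of $\opA$, the growth bound, and $G\in\Delta_2$ via the dominated convergence theorem. Since $W_0^{1,G}(\Omega)$ is reflexive and separable, Browder--Minty produces a unique $v\in W_0^{1,G}(\Omega)$, and $u := v + w$ solves the Dirichlet problem.

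Continuity of the solution $u$ in $\Omega$ is the delicate point and is precisely the content of \cite[Theorem~2]{ChKa}, which establishes local continuity of weak solutions to $-\dv\opA(x,Du)=0$ under Assumption~{\bf (A)}; invoking it gives $u\in C(\Omega)$. For the local $L^\infty$ bound on $E\Subset\Omega$, I would test the equation against $\eta^{s_G}(u-k)_+$ with a cut-off $\eta\in C_0^\infty(\Omega)$ equal to $1$ on $E$ and with $k$ ranging through a dyadic sequence of levels, deriving an Orlicz Caccioppoli inequality of the form
\[
\int_{\Omega}G(|D(u-k)_+|)\eta^{s_G}\,dx \le c\int_{\supp\eta} G\bigl((u-k)_+|D\eta|\bigr)\,dx,
\]
and then running a De Giorgi iteration combined with the Sobolev embedding available for $W^{1,G}$. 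The right-hand side of the first iteration step is controlled by $\vr_{G,\Omega}(|Du|)$, which explains the stated dependence of the final bound on $\|Du\|_{L^G(\Omega)}$.

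The main obstacle is the Orlicz-tailored De Giorgi iteration underlying the $L^\infty$ estimate, where one must carefully manage the nonhomogeneity of $G$ through the indices $i_G, s_G$; by contrast, the existence part is a routine application of monotone operator theory on the reflexive separable space $W_0^{1,G}(\Omega)$, and the continuity is reduced to a direct invocation of \cite[Theorem~2]{ChKa}.
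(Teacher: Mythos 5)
Your proposal is correct and consistent with the paper, which offers no proof of this proposition beyond the bare statement that it is a direct consequence of \cite[Theorem~2]{ChKa}. Your unpacking into Browder--Minty existence and uniqueness on the reflexive, separable space $W^{1,G}_0(\Omega)$, the cited regularity theorem for interior continuity, and an Orlicz-tailored De~Giorgi iteration for the local $L^\infty$ bound is a sound reconstruction of the standard machinery underlying that citation, and each ingredient (strict monotonicity, coercivity via \eqref{ass-op} and Proposition~\ref{prop:Poincare} with absorption through \eqref{in:Young} and Lemma~\ref{lem:equivalences}, hemicontinuity from Carath\'eodory plus $\Delta_2$) is applied correctly.
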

 
We call a function $u\in W^{1,G}_{loc}(\Omega)$ a (weak) {\em $\opA$-supersolution} to~\eqref{eq:0} if~$-\dv\opA(x,Du)\geq 0$ weakly in $\Omega$, that is 
\begin{equation*}
\int_\Omega \opA(x,Du)\cdot D\phi\,dx\geq 0\quad\text{for all }\ 0\leq\phi\in C^\infty_0(\Omega)
\end{equation*}
and a (weak) {\em $\opA$-subsolution} if $-\dv\opA(x,Du)\leq 0$ weakly in $\Omega$, that is \begin{equation}
\label{eq:sub}
\int_\Omega \opA(x,Du)\cdot D\phi\,dx\leq 0\quad\text{for all }\ 0\leq\phi\in C^\infty_0(\Omega).
\end{equation}
We consider a measure data problem \begin{equation}
    \label{eq:mu}\begin{cases}-\dv \opA(x,Du)=\mu\quad\text{in }\ \Omega,\\
    u=0\quad\text{on }\ \partial\Omega.\end{cases}
\end{equation}
A function $u\in W^{1,G}_{loc}(\Omega)$ is called {a} {\em weak solution} to~\eqref{eq:mu}, if\begin{equation}
    \label{eq:main-mu-weak}\int_\Omega \opA(x,Du)\cdot D\phi\,dx=\int_\Omega\phi\,d\mu(x)\quad\text{for every }\ \phi\in C^\infty_0(\Omega).
\end{equation}
Recall that $W^{1,G}_{0}(\Omega)$ is separable and by its very definition ${C_0^\infty}(\Omega)$ is dense there.

\begin{rem}[Existence and uniqueness of weak solutions] \rm For $\mu\in (W^{1,G}_{0}(\Omega))',$ due to the strict monotonicity of the operator, there exists a unique weak solution to~\eqref{eq:mu}, see~\cite{KiSt}.
\label{rem:weak-sol}
\end{rem}

Obviously, for arbitrary 
measure one cannot expect {that weak solutions to~\eqref{eq:mu}  exist}, but there is a suitable notion of solutions we can employ here. They are called `approximable', built like SOLA but involving the use of truncations~\eqref{Tk}. We define them as in~\cite{CiMa}. 
A function $u$, such that $T_s u\in W_{loc}^{1,G}(\Omega)$ for every $s>0$,  is called an `{\em approximable solution}' to the Dirichlet problem~\eqref{eq:mu} with a given bounded Radon measure $\mu$, if there exists a sequence $\{f_k \}_k\subset L^1 (\Omega) $ converging weakly-$\ast$ to $\mu$ in the space of measures, i.e. {such that}
\[
\lim_{k\to\infty}\int_\Omega\phi\,f_k\,dx=\int_\Omega\phi\,d\mu
\quad\text{for every $\phi\in C_0(\Omega)$}\]
 and a sequence of~weak solutions $\{u_k\}_k\subset W^{1,G}_0 (\Omega)$ to problem~\eqref{eq:mu} with $\mu$ replaced by $f_k$, satisfying $u_k\to u$  {a.e. in }$\Omega$.

\begin{rem}[Existence and uniqueness of `approximable solutions']\label{rem:approx-sols-exist}\rm Due to~\cite{CiMa} an `approximable solution' exists for every bounded Radon measure $\mu$ and then $\opA(x,Du_k)\to \opA(x,Du)$ {a.e. in }$\Omega$ with generalized gradient, cf. Remark~\ref{rem:gen-grad}. When the datum is absolutely continuous with respect to Lebesgue's measure the solutions not only exist, but they are also proven to be unique.
\end{rem}

Let us additionally comment that the use of truncations in the definition of `approximable solutions' make us independent of typical restrictions for the growth of the operator from below (we do not need to assume $p>2-\frac{1}{n}$). Indeed, a~very weak solution obtained as a limit of approximation may not have a locally integrable distributional gradient, but its trucation is $W^{1,1}_{loc}$-regular.

The classes of {\em $\opA$-superharmonic} and {\em $\opA$-subharmonic} functions are defined by the Comparison Principle.
\begin{defi}\rm
{A} lower semicontinuous function $u$ is {said to be} $\opA$-superharmonic if for any $K\Subset\Omega$ and any $\opA$-harmonic $h\in C(\overline {K})$ in $K$, $u\geq h$ on $\partial K$ implies $u\geq h$ in $K$. We say that an upper semicontinuous function $u$ is $\opA$-subharmonic if $(-u)$ is $\opA$-superharmonic.
\end{defi}

\medskip
Note that we do not assume for the definition that $\opA$-superharmonic function is finite a.e., though it is assumed in the hypotheses of our main theorems.

Directly from the definition we see that functions $\min\{u,v\}$ and $a_1u+a_2$ are $\opA$-superharmonic provided $u$ and $v$ are and $a_1,a_2\in\R$, $a_1\geq 0.$ 

We have the following relations between $\opA$-superharmonic functions and $\opA$-supersolutions.
\begin{lem}[Lemma 4.4, \cite{CZG-gOp}]\label{lem:cont-supersol-are-superharm} If $u$ is a continuous $\opA$-supersolution, then it is $\opA$-superharmonic.
\end{lem}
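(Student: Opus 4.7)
The plan is to verify the comparison condition directly. Fix $K \Subset \Omega$ and let $h \in C(\overline{K})$ be an $\opA$-harmonic function in $K$ with $u \geq h$ on $\partial K$. My goal is to prove $u \geq h$ in $K$, by constructing an admissible non-negative test function from $(h-u)^+$, inserting it into the weak formulations for $u$ and for $h$, and extracting the conclusion from the strict monotonicity of $\opA$.

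First I would introduce the regularized obstacle function $\phi_\ve := (h - u - \ve)^+$ for $\ve > 0$. Since $u$ and $h$ are continuous on $\overline{K}$ and $h - u \leq 0$ on $\partial K$, the set $\{h - u > \ve\}$ is compactly contained in $K$, so $\phi_\ve$ has compact support in $K$. Because $u$ is an $\opA$-supersolution, in particular $u \in W^{1,G}_{\mathrm{loc}}(\Omega)$, and $h \in W^{1,G}(K) \cap C(\overline{K})$ by Proposition~\ref{prop:ex-Ahf}, so $\phi_\ve \in W^{1,G}_0(K)$. Next I would show, via a standard truncation-mollification scheme (available because $G \in \Delta_2 \cap \nabla_2$, hence $C_0^\infty(K)$ is dense in $W^{1,G}_0(K)$), that $\phi_\ve$ is an admissible non-negative test function in the supersolution inequality for $u$ and in the equation for $h$. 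The growth bounds \eqref{ass-op} together with H\"older's inequality \eqref{in:Hold} in Orlicz spaces guarantee the integrability of $\opA(x, Du)\cdot D\phi_\ve$ and $\opA(x, Dh)\cdot D\phi_\ve$.

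Testing and subtracting yields
\begin{equation*}
\int_K \bigl(\opA(x, Dh) - \opA(x, Du)\bigr) \cdot D\phi_\ve \, dx \leq 0.
\end{equation*}
Since $D\phi_\ve = (Dh - Du)\,\mathds{1}_{\{h - u > \ve\}}$, the left-hand side equals
\begin{equation*}
\int_{\{h - u > \ve\}} \bigl(\opA(x, Dh) - \opA(x, Du)\bigr) \cdot (Dh - Du) \, dx,
\end{equation*}
and by the strict monotonicity of $\opA$ the integrand is non-negative, hence it vanishes a.e. By strict monotonicity again, $Dh = Du$ a.e.\ on $\{h - u > \ve\}$, so $D\phi_\ve = 0$ a.e.\ in $K$. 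Since $\phi_\ve \in W^{1,G}_0(K)$ is non-negative and has zero distributional gradient, the modular Poincar\'e inequality (Proposition~\ref{prop:Poincare}) forces $\phi_\ve \equiv 0$ a.e., and by continuity everywhere in $K$. Therefore $h \leq u + \ve$ on $K$, and letting $\ve \to 0^+$ gives $h \leq u$ in $K$, which is the desired comparison.

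The main obstacle I expect is justifying that $\phi_\ve$ is genuinely admissible in the weak formulations, which are stated only for $\phi \in C_0^\infty(\Omega)$. This requires a careful approximation argument in the Orlicz setting: one must approximate $\phi_\ve$ by smooth compactly supported functions with preserved sign and with convergence in $W^{1,G}(K)$, controlling both $u$'s and $h$'s contributions via H\"older's inequality \eqref{in:Hold}. The non-homogeneity of $G$ and the subtlety of the generalized gradient (Remark~\ref{rem:gen-grad}) mean the approximation has to be done through standard Schwartz mollification combined with a cut-off, using that $\phi_\ve$ is bounded and compactly supported in $K$; the $\Delta_2 \cap \nabla_2$ hypothesis is what makes this approximation clean.
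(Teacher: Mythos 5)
The paper does not prove this lemma itself; it simply cites it as Lemma~4.4 of \cite{CZG-gOp}, so there is no in-paper argument to compare against. Your proof is essentially the standard one (cf. Heinonen--Kilpel\"ainen--Martio, Theorem~7.16 in the $p$-Laplacian case): introduce the $\ve$-shifted cutoff $\phi_\ve=(h-u-\ve)^+$, use continuity of $u$ and $h$ on $\overline K$ together with $u\ge h$ on $\partial K$ to get $\supp\phi_\ve\Subset K$, test both the supersolution inequality and the equation for $h$ against $\phi_\ve$ (extended from $C_0^\infty$ to $W^{1,G}_0$ by density), subtract, and invoke strict monotonicity and Poincar\'e to conclude $\phi_\ve\equiv 0$. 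This is correct. Note that instead of re-deriving the comparison from scratch you could have invoked the Comparison Principle already recorded as Lemma~\ref{lem:cp}, with $v=h$, after arranging the compact support via the same $\ve$-trick; your argument is in effect a self-contained proof of the relevant special case.

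One small imprecision worth flagging: you assert $h\in W^{1,G}(K)$ "by Proposition~\ref{prop:ex-Ahf}", but that proposition concerns existence of $\opA$-harmonic functions with prescribed $W^{1,G}$-boundary data, which is not the situation here. The $h$ appearing in the definition of $\opA$-superharmonicity is only assumed $\opA$-harmonic in $K$ and continuous on $\overline K$, hence only $h\in W^{1,G}_{\rm loc}(K)$ is guaranteed. This does not break your argument: since $\supp\phi_\ve\Subset K$, one only needs $h\in W^{1,G}(K')$ for some $K'\Subset K$ containing $\supp\phi_\ve$, which local regularity gives. It would be cleaner to say this explicitly rather than claim global $W^{1,G}(K)$-regularity of $h$.
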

\begin{lem}[Lemma 4.6, \cite{CZG-gOp}]\label{lem:loc-bdd-superharm-are-supersol}
If $u$ is $\opA$-superharmonic in $\Omega$ and locally bounded from above, then $u\in W^{1,G}_{loc}(\Omega)$ and $u$ is  $\opA$-supersolution in $\Omega$.
\end{lem}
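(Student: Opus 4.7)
My plan is to follow the classical obstacle-problem route of Heinonen--Kilpel\"ainen--Martio, adapted to the Orlicz setting where the tools developed in~\cite{CZG-gOp} provide the appropriate substitutes for the $L^p$ machinery. Work locally on a ball $B_{2R}\Subset\Omega$ on which $u\leq M<\infty$, and after adding a constant (which preserves $\opA$-superharmonicity) assume $1\leq u\leq M'$ on $B_{2R}$. By lower semicontinuity one can fix a sequence $\psi_j\in C^\infty(B_{2R})\cap W^{1,G}(B_{2R})$ with $\psi_j\nearrow u$ pointwise and $\psi_j\leq M'$.

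For each $j$ solve the $\opA$-obstacle problem on $B_R$ with obstacle and boundary data $\psi_j$: find $v_j\in\mathcal{K}_{\psi_j}:=\{w\in W^{1,G}(B_R):w\geq\psi_j\text{ a.e.},\ w-\psi_j\in W_0^{1,G}(B_R)\}$ such that $\int_{B_R}\opA(x,Dv_j)\cdot(Dw-Dv_j)\,dx\geq 0$ for every $w\in\mathcal{K}_{\psi_j}$. Existence follows from monotone-operator theory in the reflexive, separable space $W^{1,G}(B_R)$ combined with Proposition~\ref{prop:Poincare}, and the standard obstacle theory shows that $v_j$ is continuous, is a weak $\opA$-supersolution in $B_R$ and is $\opA$-harmonic on the open set $\{v_j>\psi_j\}$. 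By Lemma~\ref{lem:cont-supersol-are-superharm} it is $\opA$-superharmonic, so the comparison principle built into the definition yields $v_j\leq u$ in $B_R$, while the obstacle constraint gives $\psi_j\leq v_j$.

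The core analytic step is a uniform Caccioppoli-type estimate. Testing the supersolution inequality for $v_j$ with $\eta^{s_G}(M'+1-v_j)$, where $\eta\in C_0^\infty(B_R)$ is a standard cutoff equal to one on $B_{R/2}$, and invoking~\eqref{ass-op}, Young's inequality~\eqref{in:Young} with absorption, and the equivalence $\wt G(g(t))\simeq G(t)$ from Lemma~\ref{lem:equivalences}, should give
\[
\int_{B_{R/2}}G(|Dv_j|)\,dx\;\leq\; c(\data,n,R,M'),
\]
uniformly in $j$. Together with $\psi_j\leq v_j\leq u\leq M'$ and Lemma~\ref{lem:doubling-norm} this bounds $v_j$ in $W^{1,G}(B_{R/2})$, so by reflexivity a subsequence converges weakly to some $v\in W^{1,G}(B_{R/2})$. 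Monotone pointwise convergence $v_j\nearrow u$ forces $v=u$, whence $u\in W^{1,G}_{loc}(\Omega)$.

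It remains to upgrade $u$ to a weak $\opA$-supersolution, which requires passing to the limit in the nonlinear vector field $\opA(x,Dv_j)$. The natural tool is the Orlicz analogue of the Minty/monotonicity argument: the strict monotonicity of $\opA$ together with the weak convergence and the uniform energy bound yields a.e.\ convergence of $Dv_j$ to $Du$ on $B_{R/2}$, while the growth bound $|\opA(x,\xi)|\leq c_2^\opA g(|\xi|)$ and Vitali/de~la~Vall\'ee~Poussin type equi-integrability in $L^{\wt G}$ justify passage to the limit in $\int_{B_R}\opA(x,Dv_j)\cdot D\varphi\,dx\geq 0$ for $0\leq\varphi\in C_0^\infty(B_R)$. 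The main obstacle is this final step: strong gradient convergence in Orlicz spaces is genuinely more delicate than in $L^p$ and requires the careful monotonicity-and-equi-integrability machinery of~\cite{CZG-gOp}.
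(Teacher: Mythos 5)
Your proposal follows the same obstacle-problem route (in the spirit of Heinonen--Kilpel\"ainen--Martio, adapted to Orlicz growth) that the paper indicates for this lemma, which it cites from~\cite{CZG-gOp}: monotone continuous approximations $\psi_j\nearrow u$, obstacle solutions $v_j\in W^{1,G}(B_R)$ squeezed between $\psi_j$ and $u$, a uniform Caccioppoli bound, weak compactness, and then a monotonicity-plus-equi-integrability argument to pass to the limit in the vector field. The structure and all the individual tools you invoke (Propositions~\ref{prop:Poincare},~\ref{lem:cont-supersol-are-superharm}, Lemma~\ref{lem:equivalences}, reflexivity of $W^{1,G}$) are the correct ones.

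One step is justified too quickly. You write that $v_j\leq u$ follows because ``the comparison principle built into the definition yields $v_j\leq u$.'' But the definition of $\opA$-superharmonicity compares $u$ against a function which is $\opA$-harmonic on an entire compactly contained set and dominated by $u$ on its boundary; here $v_j$ is $\opA$-harmonic only on the open set $\{v_j>\psi_j\}$, and $v_j$ is not a priori dominated by $u$ on the boundary of any obvious such set. The classical fix is to consider $U_\ve:=\{v_j>u+\ve\}$: this is open (continuity of $v_j$, lower semicontinuity of $u$), it sits inside $\{v_j>\psi_j\}$ because $u\geq\psi_j$, so $v_j$ is $\opA$-harmonic there; one then argues $U_\ve\Subset B_R$ using that $v_j$ is continuous up to $\partial B_R$ with boundary trace $\psi_j\leq u$ (this is a boundary regularity fact for the obstacle problem on the ball, not a consequence of the definition alone), notes that $v_j-\ve\leq u$ on $\partial U_\ve$, applies the defining comparison of $\opA$-superharmonicity to get $v_j-\ve\leq u$ on $U_\ve$, a contradiction, and lets $\ve\to 0$. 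Without this detour the sandwich $\psi_j\leq v_j\leq u$ --- and hence the uniform energy bound with constant $c(\data,n,R,M')$ and the identification of the weak limit with $u$ --- is not established. The remaining delicate point you correctly flag, a.e.\ gradient convergence $Dv_j\to Du$ needed to pass to the limit in $\int\opA(x,Dv_j)\cdot D\varphi\,dx\geq0$, is indeed where the Orlicz-specific machinery of~\cite{CZG-gOp} is used and cannot be replaced by a one-line Minty argument.
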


Note that within our regime $g=G'$ is strictly increasing, but not necessarily convex. Although in general $g$ does not generate the Orlicz space and does not support Poincar\'e inequality, we still can define modular $\vr_{g,\Omega}$ as in~\eqref{modular} and by its means describe fine properties of $\opA$-harmonic functions.  When $G$ is growing slowly, one cannot expect uniform integrability of gradients of truncations of solutions, but we can substitute it with the following result. 

\begin{rem}\rm \label{rem:g-intergrability-of-Ah}  If $u$ is an $\opA$-superharmonic function, which is finite a.e. in $\Omega$, we can cover any  set compactly included in $\Omega$ with finite number of balls $B$ of equal radius and such that $\vr_{g,B}(|Du|)\leq 1$.\\
Indeed, due to \cite[Remark~4.13]{CZG-gOp} a generalized gradient of $u$ is well-defined. Moreover, by \cite[Lemma~4.5]{CiMa} or \cite[Lemma~4.12]{CZG-gOp} there exists a function $\zeta_{\rm grad}:[0,|\Omega|]\to\rp$, such that $\lim_{s\to 0^+}\zeta_{\rm grad}(s)=0$ and for every measurable set $E\subset B$ it holds that $\vr_{g,E}(|Du|)\leq \zeta_{\rm grad}(|E|).$
\end{rem}
 
\subsection{$\opA$-superharmonic functions generate measures}\label{ssec:A-sh-gen-meas} 
 $\opA$-supersolutions can be characterized as solutions to measure data problems. This follows from the fact that a nonnegative distribution is a nonnegative measure (see e.g.  \cite[Th\'eor\`eme V]{s-book}). As a~consequence, we can state what follows.
\begin{lem}\label{lem:mu-great}
If a function $u\in W^{1,G}_{loc}(\Omega)$ is an $\opA$-supersolution to~\eqref{eq:0}, then there exits a nonnegative measure $ \mu_u\in (W^{1,G}_0(\Omega))'$, such that~\eqref{eq:mu} holds with $\mu_u$ being a nonnegative Radon measure, that is
\[
\int_\Omega \opA(x,Du)\cdot D\phi\,dx= \int_\Omega \phi\,d\mu_u(x)\quad\text{for all }\ 0\,\leq \, \phi\in C^\infty_0(\Omega).\]
\end{lem}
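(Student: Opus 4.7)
The plan is to read off the measure $\mu_u$ from the supersolution inequality by invoking the Schwartz representation theorem for nonnegative distributions, which is explicitly cited in the paragraph just above the lemma.

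First, I would define the linear form
\[
\langle T,\phi\rangle:=\int_\Omega \opA(x,Du)\cdot D\phi\,dx,\qquad \phi\in C_0^\infty(\Omega).
\]
Because $u\in W^{1,G}_{loc}(\Omega)$ and $\wt G(g(t))\simeq G(t)$ by Lemma~\ref{lem:equivalences}, the growth bound~\eqref{ass-op} gives $\opA(\cdot,Du)\in L^{\wt G}_{loc}(\Omega;\rn)$; together with the H\"older-type inequality~\eqref{in:Hold} this shows that $T$ is a well defined distribution on $\Omega$, with
\[
|\langle T,\phi\rangle|\leq 2\|\opA(\cdot,Du)\|_{L^{\wt G}(\supp \phi)}\|D\phi\|_{L^G(\supp \phi)}\qquad\text{for every }\phi\in C_0^\infty(\Omega).
\]

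Second, the very definition of an $\opA$-supersolution ensures that $\langle T,\phi\rangle\geq 0$ for every nonnegative $\phi\in C_0^\infty(\Omega)$. Hence $T$ is a nonnegative distribution, and by Schwartz's theorem (Th\'eor\`eme V of~\cite{s-book}) there exists a unique nonnegative Radon measure $\mu_u$ on $\Omega$ such that
\[
\langle T,\phi\rangle=\int_\Omega \phi\,d\mu_u(x)\qquad\text{for all }\phi\in C_0^\infty(\Omega),
\]
which is precisely the identity claimed in the lemma.

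Third, for the assertion $\mu_u\in (W_0^{1,G}(\Omega))'$, I would pass to the closure: since $G\in\Delta_2\cap\nabla_2$, the space $C_0^\infty(\Omega)$ is dense in $W_0^{1,G}(\Omega)$, and the H\"older estimate displayed above shows that $T$ extends to a continuous linear functional on the subspace of $W_0^{1,G}(\Omega)$ consisting of functions supported in a fixed compact $K\Subset\Omega$, with norm controlled by $2\|\opA(\cdot,Du)\|_{L^{\wt G}(K)}$. The main subtlety lies precisely here: the hypothesis gives only $u\in W^{1,G}_{loc}(\Omega)$, so $\opA(\cdot,Du)$ may fail to belong to $L^{\wt G}$ on all of $\Omega$, and to conclude global dual membership one must either deduce a global $W^{1,G}$-bound for supersolutions by a Caccioppoli-type testing combined with the modular Poincar\'e inequality of Proposition~\ref{prop:Poincare}, or interpret the dual membership in the natural local sense. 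The nonnegative Radon measure $\mu_u$ itself, however, is produced unconditionally by the Schwartz step.
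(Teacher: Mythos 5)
Your proof takes essentially the same route as the paper: the paper's justification of Lemma~\ref{lem:mu-great} is the one-sentence remark immediately preceding it (``a nonnegative distribution is a nonnegative measure,'' citing Schwartz's Th\'eor\`eme~V), and your first two steps simply flesh out that argument. Your third step correctly flags a genuine subtlety that the paper glosses over: with only $u\in W^{1,G}_{loc}(\Omega)$ the H\"older bound controls $T$ only on compactly supported test functions, so the asserted membership $\mu_u\in(W^{1,G}_0(\Omega))'$ requires either $u\in W^{1,G}(\Omega)$ globally or a local reading; indeed, wherever the paper invokes the lemma (Section~\ref{ssec:lower-bound}, Section~\ref{ssec:upper-bound}) it does so on balls $B_k$ or $B(x_0,2R)$ where the global integrability holds, which is consistent with your caveat.
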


With $\opA$-superharmonic functions the situation is a bit more subtle. In general an~$\opA$-superharmonic function does not have to belong to $W^{1,1}_{loc}(\Omega),$ it is only the~limit of $\{T_k u\}\subset W^{1,G}_{loc}(\Omega)$ being $\opA$-supersolutions (see Lemma~\ref{lem:loc-bdd-superharm-are-supersol}). Nonetheless by~\cite[Remark 4.13]{CZG-gOp} generalized gradient of $u$, in the sense of Remark~\ref{rem:gen-grad}, is well-defined. Henceforth,  we have the following observation.
\begin{prop}\label{prop:A-sh-gen-meas} Suppose that $u$ is $\opA$-superharmonic and finite a.e. in $\Omega,$ then there is a nonnegative Radon measure $\mu_u$ on $\Omega,$ such that \begin{equation*}
   \int_\Omega \opA(x,Du)\cdot D\phi\,dx=\int_\Omega\phi\,d\mu_u(x)\quad\text{for every }\ \phi\in C^\infty_0(\Omega).
\end{equation*}
\end{prop}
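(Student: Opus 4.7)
The plan is to obtain $\mu_u$ as the weak limit of the nonnegative Radon measures generated (via Lemma~\ref{lem:mu-great}) by the one-sided truncations $u_k := \min\{u,k\}$, and then use Schwartz's theorem that a nonnegative distribution is a nonnegative Radon measure (\cite[Th\'eor\`eme V]{s-book}, as cited in Section~\ref{ssec:A-sh-gen-meas}) to represent the resulting functional.

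First, for each integer $k\geq 1$, the function $u_k:=\min\{u,k\}$ is $\opA$-superharmonic as the minimum of two $\opA$-superharmonic functions, namely $u$ and the constant $k$ (constants are $\opA$-superharmonic by the closure property ``$a_1 u+a_2$'' recorded right after the definition of $\opA$-superharmonicity). Being bounded from above by $k$, Lemma~\ref{lem:loc-bdd-superharm-are-supersol} gives $u_k\in W^{1,G}_{loc}(\Omega)$ and that $u_k$ is an $\opA$-supersolution. Lemma~\ref{lem:mu-great} then produces a nonnegative Radon measure $\mu_k$ with
\[
\int_\Omega \opA(x,Du_k)\cdot D\phi\,dx=\int_\Omega \phi\,d\mu_k\qquad\text{for every }\ 0\leq\phi\in C_0^\infty(\Omega).
\]

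Second, I would identify the integrand on the left with $\mathds{1}_{\{u<k\}}\,\opA(x,Du)$. Since $u$ is $\opA$-superharmonic and finite a.e., its generalized gradient from Remark~\ref{rem:gen-grad} is well-defined, and $Du_k=\mathds{1}_{\{u<k\}}Du$ a.e.; combined with $\opA(x,0)=0$ (a consequence of $|\opA(x,\xi)|\leq c_2^\opA g(|\xi|)$ and $g(0)=0$), this yields
\[
\opA(x,Du_k)=\mathds{1}_{\{u<k\}}\,\opA(x,Du)\quad\text{a.e. in }\Omega.
\]
By Remark~\ref{rem:g-intergrability-of-Ah}, compact sets in $\Omega$ can be covered by finitely many balls on which $g(|Du|)\in L^1$, hence $|\opA(x,Du)|\leq c_2^\opA g(|Du|)\in L^1_{loc}(\Omega)$. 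Since $u$ is finite a.e., $\mathds{1}_{\{u<k\}}\to 1$ a.e., and dominated convergence yields, for every $\phi\in C_0^\infty(\Omega)$,
\[
\int_\Omega \phi\,d\mu_k=\int_\Omega \mathds{1}_{\{u<k\}}\,\opA(x,Du)\cdot D\phi\,dx\ \xrightarrow[k\to\infty]{}\ \int_\Omega \opA(x,Du)\cdot D\phi\,dx=:\Phi(\phi).
\]

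To conclude, $\Phi$ is clearly a distribution on $\Omega$, and for $\phi\geq 0$ we have $\Phi(\phi)=\lim_k\int \phi\,d\mu_k\geq 0$ since each $\mu_k$ is nonnegative. Thus $\Phi$ is a nonnegative distribution, and the cited Schwartz theorem provides a nonnegative Radon measure $\mu_u$ on $\Omega$ with $\Phi(\phi)=\int_\Omega \phi\,d\mu_u(x)$ for all $\phi\in C_0^\infty(\Omega)$, which is the assertion. The only real subtlety is the identification $Du_k=\mathds{1}_{\{u<k\}}Du$ for a function $u$ that may fail to be in $W^{1,1}_{loc}(\Omega)$; this is precisely what the generalized-gradient machinery of Remark~\ref{rem:gen-grad} (together with Remark~\ref{rem:g-intergrability-of-Ah}) was set up to handle.
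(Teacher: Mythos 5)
Your proof is correct and follows the same route the paper indicates: truncate, invoke Lemma~\ref{lem:mu-great} on the truncations, pass to the limit via dominated convergence using the $L^1_{loc}$ bound from Remark~\ref{rem:g-intergrability-of-Ah}, and apply the Schwartz theorem to the resulting nonnegative distribution. The only cosmetic difference is that you work with $\min\{u,k\}$ rather than the two-sided truncations $T_k u$ the paper mentions; since an $\opA$-superharmonic function is locally bounded below by lower semicontinuity, the two agree locally for large $k$, and your choice is marginally cleaner as $\min\{u,k\}$ is directly $\opA$-superharmonic.
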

Recall that $\mu_u$ as a nonnegative distribution is a nonnegative measure.

\subsection{Auxiliary results from Orlicz potential theory}\label{ssec:aux}
In this section we present results of~\cite{CZG-gOp}  specified to  our case.

 
\begin{lem}[Comparison Principle, Lemma 3.5, \cite{CZG-gOp}]\label{lem:cp} Let $u\in W_{loc}^{1,G}(\Omega)$ be an $\opA$-supersolution, and $v\in W^{1,G}_{loc}(\Omega)$ be an $\opA$-subsolution. If $\min(u- {v}) \in W^{1,G}_0(\Omega)$, then $u \geq  {v}$  a.e. in $\Omega$.
\end{lem}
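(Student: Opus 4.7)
The natural test function is $\phi:=(v-u)^{+}=\max(v-u,0)$, which by hypothesis belongs to $W^{1,G}_0(\Omega)$ and is nonnegative. The plan is to insert $\phi$ into both weak formulations, exploit the strict monotonicity of $\opA$, and conclude that $\phi$ has zero generalized gradient, hence vanishes.

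First I would verify that nonnegative test functions in $W^{1,G}_0(\Omega)$ are admissible in both the supersolution and subsolution inequalities, not just smooth compactly supported ones. Since $G\in\Delta_2\cap\nabla_2$, the space $W^{1,G}_0(\Omega)$ is separable and reflexive, and $C_0^\infty(\Omega)$ is dense; moreover, for $u,v\in W^{1,G}_{loc}(\Omega)$ the growth bound $|\opA(x,\xi)|\leq c_2^\opA g(|\xi|)$ combined with Lemma~\ref{lem:equivalences} and the H\"older inequality~\eqref{in:Hold} yield $\opA(x,Du),\opA(x,Dv)\in L^{\wt G}_{loc}(\Omega)$, which justifies passing to the limit in the weak formulation along an approximation of $\phi$ (truncation plus mollification preserves nonnegativity and $W^{1,G}_0$-membership thanks to $\Delta_2$).

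Next I would subtract the two inequalities tested against $\phi$. The supersolution property of $u$ gives $\int_\Omega \opA(x,Du)\cdot D\phi\,dx\geq 0$, while~\eqref{eq:sub} for $v$ gives $\int_\Omega \opA(x,Dv)\cdot D\phi\,dx\leq 0$. Subtracting,
\begin{equation*}
\int_{\{v>u\}}\bigl(\opA(x,Du)-\opA(x,Dv)\bigr)\cdot(Dv-Du)\,dx\;\geq\;0,
\end{equation*}
since $D\phi=\mathds{1}_{\{v>u\}}(Dv-Du)$ a.e. By the strict monotonicity assumed in {\rm Assumption {\bf (A)}}, the integrand is strictly negative wherever $Du\neq Dv$ and zero otherwise, so it is nonpositive a.e. Combining with the displayed lower bound forces $Du=Dv$ almost everywhere on $\{v>u\}$, which is exactly $D\phi=0$ a.e. in $\Omega$.

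To finish, since $\phi\in W^{1,G}_0(\Omega)$ has vanishing generalized gradient, the modular Poincar\'e inequality of Proposition~\ref{prop:Poincare} applied on a ball containing $\Omega$ (after extending $\phi$ by zero) gives $\int G(|\phi|/R)\,dx\leq c\int G(|D\phi|)\,dx=0$, so $\phi\equiv 0$ a.e., i.e.\ $u\geq v$ a.e. in $\Omega$, as claimed. The main obstacle I anticipate is the first step, the rigorous extension of admissible test functions from $C_0^\infty(\Omega)$ to nonnegative elements of $W^{1,G}_0(\Omega)$: one must produce a sequence of smooth nonnegative compactly supported approximations of $(v-u)^+$ that converge in the Orlicz--Sobolev norm and guarantees convergence of both $\opA(x,Du)\cdot D(\cdot)$ and $\opA(x,Dv)\cdot D(\cdot)$ in $L^1(\Omega)$; this relies on reflexivity and the $\Delta_2$ property but requires a careful truncation-mollification argument since $\opA$ is only controlled by $g$, not by a linear function.
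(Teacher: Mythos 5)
Your proposal is the standard comparison-principle argument and is correct; note that this paper does not actually prove the lemma, it imports it directly from \cite{CZG-gOp}, so the ``paper's own proof'' is the referenced one, which follows exactly the scheme you describe: test both inequalities with $\phi=(v-u)^{+}\in W^{1,G}_0(\Omega)$, subtract, invoke strict monotonicity to force $Du=Dv$ a.e.\ on $\{v>u\}$, and finish with the Poincar\'e inequality. Your sign bookkeeping is right, and your identification of the one delicate step---extending admissibility of test functions from $C_0^\infty(\Omega)$ to nonnegative elements of $W^{1,G}_0(\Omega)$ when $u,v$ are only in $W^{1,G}_{loc}(\Omega)$, so that $\opA(x,Du),\opA(x,Dv)\in L^{\widetilde G}_{loc}(\Omega)$ but not a priori $L^{\widetilde G}(\Omega)$---is exactly the technical content hidden in the reference. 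One refinement worth noting: rather than requiring separate $L^1(\Omega)$-convergence of $\opA(x,Du)\cdot D\psi_j$ and $\opA(x,Dv)\cdot D\psi_j$ (which each need global integrability you do not have), it is cleaner to keep the difference together. The integrand $\bigl(\opA(x,Du)-\opA(x,Dv)\bigr)\cdot D\phi=\mathds{1}_{\{v>u\}}\bigl(\opA(x,Du)-\opA(x,Dv)\bigr)\cdot(Dv-Du)$ is nonpositive a.e.\ by monotonicity, so its integral over $\Omega$ is well defined in $[-\infty,0]$ with no global $L^{\widetilde G}$ hypothesis; the approximation then only has to deliver the one-sided inequality $\int_\Omega\bigl(\opA(x,Du)-\opA(x,Dv)\bigr)\cdot D\phi\,dx\geq 0$, which immediately forces the integrand to vanish a.e.
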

As a direct consequence of comparison with $v\equiv 0$ solving $-\dv\opA(x,Dv)=0$, we have the following conclusion.
\begin{rem}\label{rem:nonnegativeness}  \rm
$\opA$-supersolutions are nonnegative a.e. 
\end{rem}

\begin{prop}[Harnack's Principle, Theorem 2,~\cite{CZG-gOp}]
\label{prop:har-princ}  Suppose that $u_i$, $i=1,2,\ldots$, are $\opA$-superharmonic and finite a.e. in $\Omega$. If the sequence $\{u_i\}$ is nondecreasing then the limit function $u=\lim_{i \to \infty} u_i$ is  $\opA$-superharmonic or infinite in~$\Omega$.  Furthermore, if $u_i$, $i=1,2,\ldots$, are nonnegative, then up to a subsequence also $Du_i\to Du$ a.e. in $\{u<\infty\},$ where `$D$' stands for the generalized gradient.
\end{prop}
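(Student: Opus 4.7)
The plan is to split the statement into two parts: first showing that the monotone limit $u$ is either $\opA$-superharmonic or identically $\infty$, and then, under the extra nonnegativity assumption, promoting this to almost everywhere convergence of the generalized gradients.

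For the first part, lower semicontinuity of $u$ is automatic, because a pointwise supremum of lower semicontinuous functions is lower semicontinuous. Assuming $u\not\equiv\infty$, I would verify the comparison property directly. Fix $K\Subset\Omega$ and an $\opA$-harmonic $h\in C(\overline K)$ in $K$ with $u\geq h$ on $\partial K$. For $\epsilon>0$ consider the sets $V_i:=\{x\in\overline K:u_i(x)>h(x)-\epsilon\}$, which are open by lower semicontinuity of $u_i$ and continuity of $h$, and nondecreasing in $i$. For each $x\in\partial K$, the monotone convergence $u_i(x)\nearrow u(x)\geq h(x)>h(x)-\epsilon$ shows $\bigcup_i V_i\supset\partial K$; by compactness of $\partial K$ there exists $i_0$ with $V_{i_0}\supset\partial K$. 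Hence $u_i\geq h-\epsilon$ on $\partial K$ for all $i\geq i_0$, and the $\opA$-superharmonicity of $u_i$ applied to $h-\epsilon$ (still $\opA$-harmonic in $K$) forces $u_i\geq h-\epsilon$ in $K$. Letting $i\to\infty$ and then $\epsilon\to 0^+$ yields $u\geq h$ in $K$.

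For the gradient statement on $\{u<\infty\}$, I would work on a ball $B\Subset B'\Subset\{u<\infty\}$ on which $u$ is bounded. Since $0\leq u_i\leq u$ is locally bounded, Lemma~\ref{lem:loc-bdd-superharm-are-supersol} gives $u_i\in W^{1,G}_{loc}(B')$, and a standard Caccioppoli-type estimate obtained by testing the supersolution inequality with $\eta^{s_G}(M-u_i)$ (with $M$ the local $L^\infty$ bound for $u$ and $\eta$ a cutoff) combined with Assumption~\textbf{(A)} gives a uniform bound $\sup_i\vr_{G,B}(|Du_i|)<\infty$. Reflexivity of the Orlicz-Sobolev space extracts a subsequence converging weakly in $W^{1,G}(B)$, whose limit coincides with the generalized gradient $Du$ from Remark~\ref{rem:gen-grad}. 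To upgrade to a.e.\ convergence I would use the classical monotonicity trick: by Proposition~\ref{prop:A-sh-gen-meas}, both $u_i$ and $u$ generate nonnegative measures $\mu_{u_i}$, $\mu_u$; testing the difference of the two distributional equations against $\eta(u-u_i)$ (possibly after truncating, justified via Remark~\ref{rem:gen-grad}) gives
\[
\int_B \eta\bigl(\opA(x,Du)-\opA(x,Du_i)\bigr)\cdot(Du-Du_i)\,dx = R_i - \int_B (u-u_i)\opA(x,Du)\cdot D\eta\,dx + \int_B (u-u_i)\opA(x,Du_i)\cdot D\eta\,dx,
\]
where $R_i$ collects the measure terms $\int_B\eta(u-u_i)\,d(\mu_u-\mu_{u_i})$. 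Using $u_i\nearrow u$ pointwise together with the uniform $L^G$-bound on $Du_i$ and the growth of $\opA$, each term on the right tends to $0$, while strict monotonicity of $\opA(x,\cdot)$ makes the integrand on the left nonnegative; therefore $Du_i\to Du$ in measure on $B$, and passing to a further subsequence gives a.e.\ convergence.

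The main obstacle will be the bookkeeping in the Orlicz setting: the Caccioppoli estimate and the monotonicity argument have to be run at the level of the modulars $\vr_G$ and $\vr_g$ rather than $L^p$ norms, using the $\Delta_2\cap\nabla_2$ hypothesis (Lemmas~\ref{lem:equivalences},~\ref{lem:doubling-norm} and Remark~\ref{rem:g-intergrability-of-Ah}) to pass between modular and norm bounds, and every occurrence of $Du$ must be interpreted via the generalized gradient of Remark~\ref{rem:gen-grad} since $u$ itself need not be in $W^{1,1}_{loc}$. The compactness step in the comparison argument also genuinely uses the lower semicontinuity of each $u_i$, not merely of the limit $u$, because the uniform index $i_0$ on the compact boundary $\partial K$ is what converts pointwise monotone convergence into a usable inequality.
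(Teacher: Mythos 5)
The paper does not prove this proposition; it is imported from~\cite{CZG-gOp}, so there is no internal proof to compare against and your argument must stand on its own.

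Your first claim is handled correctly: the monotone supremum of lower semicontinuous functions is lower semicontinuous, the Dini-type compactness argument on $\partial K$ produces a uniform index $i_0$, and replacing $h$ by $h-\epsilon$ (still $\opA$-harmonic, since the equation is homogeneous under adding constants) before sending $\epsilon\to 0^+$ closes the comparison property. This is the standard Harnack-convergence mechanism and it works under the paper's definition.

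The gradient part contains a genuine gap. You restrict to balls $B$ on which $u$ is \emph{bounded}, and every step — the Caccioppoli inequality with test function $\eta^{s_G}(M-u_i)$, the weak compactness in $W^{1,G}(B)$, and the monotonicity trick with $\eta(u-u_i)$ — leans on the finite local bound $M$. But an $\opA$-superharmonic function can be finite a.e.\ while locally unbounded at every point: a convergent sum of fundamental-solution-type potentials of the kind in Corollary~\ref{coro:potential}, with poles at a countable dense set, gives $|\{u=\infty\}|=0$ yet no ball on which $u$ is bounded. In that situation your argument proves a.e.\ convergence on the empty set, while the statement asserts it on a full-measure set. This is precisely why the proposition refers to the \emph{generalized} gradient of Remark~\ref{rem:gen-grad}. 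The repair is to run your entire computation on the truncations $T_k u_i$, $T_k u$ at each fixed level $k$: these are bounded and $\opA$-superharmonic, hence $\opA$-supersolutions in $W^{1,G}_{loc}$ by Lemma~\ref{lem:loc-bdd-superharm-are-supersol}, so your Caccioppoli estimate (with $M=k$) and monotonicity argument apply verbatim and give $D(T_k u_i)\to D(T_k u)$ a.e., up to a subsequence. Since $D(T_k v)=\mathds{1}_{\{v<k\}}Dv$, on $\{u<k\}$ one has $u_i\leq u<k$, hence $Du_i=D(T_k u_i)$ and $Du=D(T_k u)$ there, so $Du_i\to Du$ a.e.\ on $\{u<k\}$; a diagonal extraction over $k\to\infty$ yields the claim on $\{u<\infty\}$. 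The sign analysis of the measure terms (one only needs $\limsup_i R_i\leq\limsup_i\int\eta(u-u_i)\,d\mu_u=0$) and the passage from convergence in measure to a.e.\ convergence are fine once transplanted to the truncations.
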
{}

\begin{prop}[Minimum Principle, Theorem 4,~\cite{CZG-gOp}]\label{prop:mini-princ} Suppose $u$ is $\opA$-superharmonic and finite a.e.  in $\Omega$. If $E\Subset\Omega$ a connected open subset of $\Omega$, then
\[\inf_{E} u=\inf_{\partial E} u.\]
\end{prop}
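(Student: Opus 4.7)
My plan is to treat the claimed identity as two separate one-sided bounds; only $\inf_E u \ge \inf_{\partial E} u$ carries the superharmonic content, while the opposite direction is a matter of realising the boundary infimum as a limit from inside.

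For the main bound I set $m := \inf_{\partial E} u$. Since $u$ is lower semicontinuous and $\partial E$ compact, either $m = +\infty$ (in which case the statement is trivial) or $m \in \mathbb{R}$ is attained on $\partial E$. I would use the constant $h \equiv m$ as an $\opA$-harmonic barrier: Assumption~\textbf{(A)} via $|\opA(x,0)| \le c_2^{\opA} g(0) = 0$ forces $\opA(x,0) \equiv 0$, so $h \in C(\overline{E})$ is $\opA$-harmonic in $E$. Since $u \ge h = m$ on $\partial E$ by the very choice of $m$, the defining Comparison Principle for $\opA$-superharmonic functions, applied with $K=E$ and barrier $h$, yields $u \ge m$ throughout $E$, so $\inf_E u \ge m = \inf_{\partial E} u$.

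For the reverse $\inf_E u \le \inf_{\partial E} u$, I would pick $x^\ast \in \partial E$ with $u(x^\ast) = m$ and construct a sequence $y_k \in E$ with $y_k \to x^\ast$ and $u(y_k) \to m$. The lower bound $u(y_k) \ge m$ is the output of the previous step; for the matching upper bound I would compare $u$ from above with continuous $\opA$-harmonic majorants produced by Proposition~\ref{prop:ex-Ahf}. Namely, on small open neighbourhoods $V_k = E \cap B(x^\ast,r_k)$, solve $-\dv \opA(x,Dh_k) = 0$ with continuous boundary data $w_k$ on $\partial V_k$ chosen to dominate $u$ there and to contract toward $m$ near $x^\ast$. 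To bring $u$ into the weak-supersolution class required by comparison, I would first pass to the bounded, $W^{1,G}_{loc}$-regular truncation $\min(u,k_0)$ (Lemma~\ref{lem:loc-bdd-superharm-are-supersol}), apply the Orlicz comparison principle of Lemma~\ref{lem:cp} to get $\min(u,k_0) \le h_k$ in $V_k$, and let $k_0$ be chosen large enough (possible since $u(x^\ast)=m<\infty$ and $u$ is lower semicontinuous). Continuity of $h_k$ at $x^\ast$ then gives $\limsup_{E \ni y \to x^\ast} u(y) \le m$, hence $\inf_E u \le m$.

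The delicate point is the reverse bound rather than the main one. The main inequality is essentially a one-line application of the defining Comparison Principle once it is noticed that the growth hypothesis forces $\opA(x,0) = 0$, so constants are legitimate $\opA$-harmonic barriers. The reverse direction is the genuine obstacle: without additional regularity a lower semicontinuous function may satisfy the superharmonic comparison while refusing to approach its boundary infimum from inside, and the natural mechanism to rule this out here is the existence-plus-continuity of $\opA$-harmonic functions with prescribed continuous boundary data (Proposition~\ref{prop:ex-Ahf}) coupled with the Orlicz comparison principle (Lemma~\ref{lem:cp}), whose correct invocation demands the truncation step just indicated because a general $\opA$-superharmonic $u$ need not be a weak supersolution globally. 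Connectedness of $E$ is not used in this weak form; it would only intervene in a strong version asserting that $u$ cannot attain its infimum in the interior of $E$ unless it is constant there.
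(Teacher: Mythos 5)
Your forward inequality $\inf_E u \ge \inf_{\partial E} u$ is correct and is the natural argument: the growth bound $|\opA(x,\xi)|\le c_2^\opA g(|\xi|)$ together with $g(0^+)=0$ (which follows from $G$ being an $N$-function satisfying~\eqref{iG-sG}) forces $\opA(x,0)=0$, so constants are $\opA$-harmonic, and the defining Comparison Principle with $K=E$ and barrier $h\equiv\inf_{\partial E}u$ gives the bound directly. Note the paper does not prove Proposition~\ref{prop:mini-princ} itself; it imports it as Theorem~4 of~\cite{CZG-gOp}, so the evaluation here is of your argument on its own terms.

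The reverse inequality is where your argument breaks, and it breaks at a specific step: the invocation of Lemma~\ref{lem:cp} to conclude $\min(u,k_0)\le h_k$ in $V_k$. Lemma~\ref{lem:cp} asserts that an $\opA$-\emph{super}solution dominates an $\opA$-\emph{sub}solution once they are ordered on the boundary; it can only produce \emph{lower} bounds for a superharmonic $u$, never upper bounds. After truncation, $\min(u,k_0)$ is again an $\opA$-superharmonic function bounded above, hence by Lemma~\ref{lem:loc-bdd-superharm-are-supersol} an $\opA$-supersolution; the putative majorants $h_k$ are $\opA$-harmonic, hence also supersolutions. Comparison between two supersolutions yields nothing, and reading $h_k$ as the supersolution and $\min(u,k_0)$ as the subsolution fails because $\min(u,k_0)$ is not a subsolution. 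So $\min(u,k_0)\le h_k$ is unsupported, and with it the whole chain leading to $\limsup_{E\ni y\to x^\ast}u(y)\le m$. The conceptual obstruction is intrinsic: superharmonicity is a one-sided condition giving control from below by $\opA$-harmonic minorants (indeed the Poisson modification of Proposition~\ref{prop:Pois} sits \emph{below} $u$), and no comparison argument can bound $u$ from above by an $\opA$-harmonic function with boundary data dominating $u$. To obtain $\inf_E u\le\inf_{\partial E}u$ one needs a genuinely quantitative input, for instance the weak Harnack inequality of Proposition~\ref{prop:inf-est} applied on small balls around the minimizing boundary point $x^\ast$, combined with the fine regularity of $\opA$-superharmonic functions (the identity $u(x)={\rm ess}\liminf_{y\to x}u(y)$, which ties ${\rm ess\,inf}$ near $x^\ast$ to $u(x^\ast)=m$) and a measure-density estimate for $E$ at $x^\ast$. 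As written, your reverse direction is not established.
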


\begin{prop}[{Maximum Principle}, Corollary 4.16,~\cite{CZG-gOp}]\label{prop:max-princ} Suppose $u$ is $\opA$-subharmonic and finite a.e. in $\Omega$. If $E\Subset\Omega$ a connected open subset of $\Omega$, then
\[\sup_E u=\sup_{\partial E} u.\]
\end{prop}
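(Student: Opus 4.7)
The plan is to deduce the Maximum Principle directly from the Minimum Principle (Proposition~\ref{prop:mini-princ}) already established above, by passing to $-u$. Since $u$ is $\opA$-subharmonic in $\Omega$, the definition given in Section~\ref{ssec:sols} says precisely that $v:=-u$ is $\opA$-superharmonic in $\Omega$. Finiteness a.e. is trivially preserved under sign change, so $v$ is finite a.e. in $\Omega$ and the hypotheses of the Minimum Principle are met for $v$ on the connected open subset $E \Subset \Omega$.

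Applying Proposition~\ref{prop:mini-princ} to $v$ yields
\[
\inf_{E} v \;=\; \inf_{\partial E} v .
\]
Using the elementary identity $\inf(-f) = -\sup f$ on each side, this reads $-\sup_E u = -\sup_{\partial E} u$, and multiplication by $-1$ produces the claimed equality $\sup_E u = \sup_{\partial E} u$.

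There is essentially no obstacle to this argument; it is a cosmetic dualization of the Minimum Principle, which is precisely why the cited reference~\cite{CZG-gOp} records it as a corollary. The only subtlety worth flagging is that one must not try to derive the result by invoking any symmetry of $\opA$ in $\xi$ (the operator $\opA$ need not be odd, so $-u$ need not solve a comparable equation); rather, the paper's definition of $\opA$-subharmonicity is set up tautologically as "$-u$ is $\opA$-superharmonic," so the reduction is immediate and bypasses any reliance on the PDE form. Thus no further machinery beyond the already-stated Minimum Principle is required.
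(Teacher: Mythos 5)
Your proof is correct. The paper itself gives no argument here --- it imports the statement directly as Corollary 4.16 of \cite{CZG-gOp} --- and your dualization (using that, by the paper's definition, $\opA$-subharmonicity of $u$ \emph{means} $\opA$-superharmonicity of $-u$, then applying Proposition~\ref{prop:mini-princ} and the identity $\inf(-f)=-\sup f$) is exactly the derivation the ``Corollary'' labelling points to, including your correct caveat that no oddness of $\opA$ in $\xi$ is needed.
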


Recall that by Remark~\ref{rem:g-intergrability-of-Ah}, we can cover a compact set included in our domain with finite number of balls $B$ of equal radius and such that $\vr_{g,B}({|}Du{|})\leq 1$, so without any loss of generality we can state a favorable Harnack's inequality over such small balls.
\begin{prop}[Harnack's inequality, Theorem 1,~\cite{CZG-gOp}]
\label{prop:harnack} Suppose  $u$ is an $\opA$-harmonic  and nonnegative function in a connected set $\Omega$. Then there exist  $R_H=R_H(n)>0$ and $C=C(\data,n,R_H,{\rm ess\,sup}_{B_{R_H}}u)>0
$ such that  
\begin{equation}\label{in:harnack-internal}\sup_{ B_R} u\leq C(\inf_{ B_R} u+R)\end{equation} 
for all $R\in(0,R_H]$ provided $B_{3R}\Subset\Omega$ and $\vr_{g,B_{3R}}(|Du|)\leq 1$.
\end{prop}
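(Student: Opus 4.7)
The plan is to carry out a Moser-type iteration adapted to the Orlicz growth, split into a sup-estimate, a weak Harnack (inf-)estimate, and a logarithmic bridge furnished by John--Nirenberg. I would work throughout with the shifted function $\bar u:=u+R$ rather than $u$ itself: this single device is what produces the additive $R$ on the right of~\eqref{in:harnack-internal} and sidesteps the lack of scaling homogeneity of $\opA$, which prevents the normalization $\inf u=1$ available in the classical $p$-Laplace theory.

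\emph{Step 1 (Caccioppoli on powers).} For a nonnegative $\opA$-harmonic $u$ and $\gamma\in\R\setminus\{-1\}$, I would test the weak formulation with $\phi=\eta^{s_G+1}\bar u^{\gamma}$, where $\eta\in C_0^\infty(B_{2R})$ is a standard cut-off with $\eta\equiv 1$ on $B_R$. The growth conditions~\eqref{ass-op} combined with Young's inequality~\eqref{in:Young} for the pair $(G,\wt G)$ absorb the gradient of $\eta$ into the left-hand side and yield
\[
\int |\gamma|\,\bar u^{\gamma-1}G(|Du|)\eta^{s_G+1}\,dx \;\le\; c\int \bar u^{\gamma-1}\,G(|D\eta|\,\bar u)\,\eta^{s_G}\,dx.
\]
The choice of the cut-off power $s_G+1$ is tailored so that on $\{\eta>0\}$ the factor $G(|D\eta|\,\bar u)$ behaves like $G(\bar u/R)$, and the smallness hypothesis $\vr_{g,B_{3R}}(|Du|)\le 1$ together with Lemma~\ref{lem:equivalences} ensures that the comparability constants depend only on $\data$ and $n$.

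\emph{Step 2 (Iteration: sup and weak Harnack).} Substituting $v=\bar u^{(\gamma+i_G)/i_G}$ and feeding the Caccioppoli estimate into the modular Poincar\'e inequality of Proposition~\ref{prop:Poincare} applied to $v\eta^{(s_G+1)/i_G}$, one obtains a reverse H\"older-type inequality
\[
\Big(\barint_{B_\rho}\bar u^{\chi q}\,dx\Big)^{1/\chi}\le \frac{c}{(r-\rho)^{\kappa}}\barint_{B_r}\bar u^{q}\,dx,\qquad R\le\rho<r\le 2R,
\]
with gain $\chi>1$ from Orlicz--Sobolev embedding. Moser iteration over dyadically shrinking radii then gives, for some $q_0=q_0(n,i_G,s_G)>0$,
\[
\sup_{B_R}\bar u\le c\Big(\barint_{B_{2R}}\bar u^{q_0}\,dx\Big)^{1/q_0}, \qquad \Big(\barint_{B_{2R}}\bar u^{-q_0}\,dx\Big)^{-1/q_0}\le c\,\inf_{B_R}\bar u,
\]
the second inequality coming from re-running the scheme with $\gamma<-1$.

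\emph{Step 3 (Log-estimate, John--Nirenberg, and conclusion).} The exponent $\gamma=-1$ excluded in Step 1 is now used to derive an estimate of the form
\[
\int G(|D\log\bar u|)\eta^{s_G+1}\,dx\le c\int G(|D\eta|)\,dx,
\]
which through Proposition~\ref{prop:Poincare} places $\log\bar u$ in a local BMO class with controlled seminorm. John--Nirenberg then supplies the bridge
\[
\Big(\barint_{B_{2R}}\bar u^{q_0}\,dx\Big)^{1/q_0}\Big(\barint_{B_{2R}}\bar u^{-q_0}\,dx\Big)^{1/q_0}\le c,
\]
and chaining this with Step 2 yields $\sup_{B_R}\bar u\le c\,\inf_{B_R}\bar u$, which rewritten in terms of $u$ is exactly~\eqref{in:harnack-internal}. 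The main obstacle throughout is the non-homogeneity of $\opA$: the absence of invariance under $u\mapsto ku$ forces the shift $\bar u=u+R$, makes each Caccioppoli power depend nontrivially on both indices $i_G,s_G$, and explains why the smallness assumption $\vr_{g,B_{3R}}(|Du|)\le 1$ is precisely what is needed to keep the iterated Sobolev constants uniform and thus close the Moser iteration.
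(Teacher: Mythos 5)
The paper does not contain a proof of this proposition: it is imported verbatim as Theorem 1 of~\cite{CZG-gOp}, and the surrounding text explicitly records that a Harnack inequality of this form was first given by Lieberman~\cite{lieb} with a constant depending on $\sup_{B_R} u$ and then sharpened to this statement in~\cite{CZG-gOp}. So there is no argument inside this paper to measure your sketch against, and a reviewer can only assess whether the sketch would produce the statement as written.

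Your overall Moser-type scheme (Caccioppoli on powers of $\bar u = u+R$, reverse H\"older via modular Poincar\'e and iteration, a $\gamma=-1$ log estimate fed into John--Nirenberg) is the standard template, and the shift by $R$ is indeed the right device to handle the lack of scaling. However, there is a genuine gap: you assert at each stage that the constants depend only on $\data$ and $n$, and your final inequality $\sup_{B_R}\bar u\le c\,\inf_{B_R}\bar u$ has $c=c(\data,n)$, whereas the proposition explicitly states $C=C(\data,n,R_H,{\rm ess\,sup}_{B_{R_H}}u)$. This dependence on the size of the solution is not a cosmetic artifact; it is exactly where the Orlicz non-homogeneity bites. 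In Step~1 the quantity $G(|D\eta|\,\bar u)$ cannot be rewritten as a fixed power of $\bar u/R$ times $G(1)$: the inequalities~\eqref{comp-i_G-s_G} give one-sided comparisons $G(at)\ge a^{i_G}G(t)$ for $a\ge 1$ and $G(at)\ge a^{s_G}G(t)$ for $a\le 1$, and which index applies depends on whether the argument is above or below~$1$, i.e.\ on the pointwise size of $\bar u$. The same issue recurs in the substitution $v=\bar u^{(\gamma+i_G)/i_G}$, in the Sobolev gain $\chi$, and in the log-estimate where $G(|Du|)\bar u^{-2}$ is not $G(|Du|/\bar u)$. Across the countably many Moser steps these comparison constants accumulate, and the only way to keep them finite is to control them by ${\rm ess\,sup}_{B_{R_H}}u$. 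The hypothesis $\vr_{g,B_{3R}}(|Du|)\le 1$ controls the gradient modular, not the magnitude of $u$, so it cannot absorb this dependence as you claim. Either you must track and exhibit the $\sup u$-dependence explicitly (which recovers the stated theorem) or you are implicitly proving something strictly stronger than the cited result, which would need a new idea you have not supplied.

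Two smaller points. First, the exponent $\gamma=-1$ is excluded in your Step~1 because the measure $\bar u^{\gamma-1}\,dx$ degenerates there, but the test function in Step~3 is then $\phi=\eta^{s_G+1}\bar u^{-1}$; you should check that this $\phi$ is admissible, i.e.\ that $\bar u\ge R>0$ keeps it in $W^{1,G}_0$, which is again a use of the shift and worth saying. Second, the cut-off exponent should be justified via the inequality $\wt G(\eta^{q-1}g(t))\le c\,\eta^{q}\wt G(g(t))$ as in Proposition~\ref{prop:caccI}; $q\ge s_G$ is the actual threshold, and quoting $s_G+1$ without explanation suggests the dependence on the indices has not been worked through.
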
  
\noindent Inequality~\eqref{in:harnack-internal} was provided first for solutions to problems with Orlicz growth with a constant dependent on $\sup_{B_R} u$, see~\cite{lieb}. For superquasiminimizers a similar result is given in~\cite{hh-zaa}, but the proof for Orlicz $\opA$-harmonic functions was not proven before~\cite{CZG-gOp}.
\begin{prop}[Corollary 4.17, \cite{CZG-gOp}]\label{prop:int-sph-harn}
Suppose $u$ is $\opA$-harmonic in $B_{\frac{3}{2}R}\setminus B_R,$ then there exist $R_H,C>0$ from Proposition~\ref{prop:harnack}, such that
\begin{equation*}
\sup_{\partial B_{\frac 43 R}} u\leq C(\inf_{\partial B_{\frac 43 R}} u+2R) \end{equation*}
for all $R\in(0,R_H]$ provided $B_{3R}\Subset\Omega$ and $\vr_{g,B_{3R}}(|Du|)\leq 1$.\end{prop}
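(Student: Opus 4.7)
The strategy is a standard Harnack-chaining argument on the sphere $\partial B_{4R/3}$, iterating the interior Harnack inequality (Proposition~\ref{prop:harnack}) along a bounded family of small balls contained in the annulus $B_{3R/2}\setminus B_R$ where $u$ is $\opA$-harmonic. Since $\opA$-harmonic functions are continuous (see Section~\ref{ssec:sols}) and $\partial B_{4R/3}$ is compact and arcwise connected for $n\ge 2$, the supremum and infimum over this sphere are attained at points $x^{\ast}$ and $x_{\ast}$, which I would join by a great-circle arc $\gamma\subset\partial B_{4R/3}$ of length at most $\pi\cdot\tfrac{4R}{3}$.

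Next I would fix $r:=R/18$. For every $x\in\partial B_{4R/3}$ one has $B(x,3r)\subset B_{3R/2}\setminus B_R$, because $\mathrm{dist}(\partial B_{4R/3},\partial B_R)=R/3=6r$ and $\mathrm{dist}(\partial B_{4R/3},\partial B_{3R/2})=R/6=3r$. In particular, $u$ is $\opA$-harmonic on each such $B(x,3r)$, and since $B(x,3r)\subset B_{3R}\Subset\Omega$, monotonicity of the modular yields $\vr_{g,B(x,3r)}(|Du|)\le\vr_{g,B_{3R}}(|Du|)\le 1$. Together with $r<R\le R_H$, this makes the hypotheses of Proposition~\ref{prop:harnack} available on every such small ball.

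Now cover $\gamma$ by an overlapping chain $B(y_0,r),\dots,B(y_N,r)$ with $y_0=x^{\ast}$, $y_N=x_{\ast}$, $y_i\in\gamma$, and $y_{i+1}\in B(y_i,r/2)$. Since $|\gamma|\lesssim R$ while each consecutive step advances by a distance $\simeq r\simeq R$, the number $N$ depends only on $n$. At each link, Harnack combined with $y_{i+1}\in B(y_i,r)$ gives
\[
u(y_i)\le\sup_{B(y_i,r)}u\le C_H\bigl(\inf_{B(y_i,r)}u+r\bigr)\le C_H\bigl(u(y_{i+1})+r\bigr),
\]
and iterating $N$ times produces
\[
u(x^{\ast})\le C_H^{N}u(x_{\ast})+r\sum_{k=1}^{N}C_H^{k}\le C\bigl(u(x_{\ast})+R\bigr)\le C\Bigl(\inf_{\partial B_{4R/3}}u+2R\Bigr),
\]
with $C=C(\data,n)$, which is precisely the claim.

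The one genuinely non-automatic point is the uniformity of the Harnack constant along the chain: the constant in Proposition~\ref{prop:harnack} depends on ${\rm ess\,sup}_{B_{R_H}}u$ for each individual Harnack ball. I would handle this by fixing once and for all the compact neighbourhood $K:=\bigcup_{x\in\partial B_{4R/3}}\overline{B(x,R_H)}$ of the sphere; because $u$ is continuous on $K$ it is bounded there, and this single bound controls ${\rm ess\,sup}_{B(y_i,R_H)}u$ simultaneously for every $i$, so one common $C_H$ serves the whole chain. Everything else reduces to routine covering-plus-iteration, and the geometric choice $r=R/18$ was the key point ensuring the small balls sit in the annulus of $\opA$-harmonicity.
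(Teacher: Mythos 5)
This proposition is imported verbatim from \cite{CZG-gOp} (Corollary 4.17); the present paper does not give its own proof, so there is nothing in-text to compare your argument against. What you propose --- a Harnack chain along a great-circle arc of $\partial B_{4R/3}$ using small balls contained in the annulus of $\opA$-harmonicity, with the number of links bounded depending only on $n$ because $r\simeq R$ --- is the canonical and, to my knowledge, essentially the only reasonable route, and it is sound in its broad outline. Three caveats, in decreasing order of substance.

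First, the argument tacitly uses that $u\geq 0$: Proposition~\ref{prop:harnack} is a Harnack inequality and requires a nonnegative $\opA$-harmonic function, but that hypothesis does not appear in the statement of Proposition~\ref{prop:int-sph-harn}. It is almost certainly an implicit hypothesis carried over from \cite{CZG-gOp}, and every application in the paper (to $w_k$ and to the Poisson modification $v$) does verify $u\geq 0$; but you should say so explicitly rather than leave it silent, since without it the chain estimate $u(y_i)\leq C_H(u(y_{i+1})+r)$ fails.

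Second, the constant. You conclude mid-proof with ``$C=C(\data,n)$'', but Proposition~\ref{prop:harnack} gives $C_H=C_H(\data,n,R_H,\operatorname*{ess\,sup}_{B_{R_H}}u)$, and after iterating the chain the resulting $C$ inherits exactly that dependence (raised to the power $N$). Your closing paragraph correctly identifies this and proposes to take a single common bound over a neighbourhood $K$ of the sphere; that is the right repair, but then the final constant depends on $\sup_K u$ and you should simply report it as ``the same dependence as in Proposition~\ref{prop:harnack}'', matching the wording of the statement (``$C>0$ from Proposition~\ref{prop:harnack}''), rather than claiming $C=C(\data,n)$.

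Third, a cosmetic geometric slip: with $r=R/18$ you have $3r=R/6=\operatorname{dist}(\partial B_{4R/3},\partial B_{3R/2})$, so $B(y_i,3r)\subset B_{3R/2}$ but not $B(y_i,3r)\Subset B_{3R/2}\setminus B_R$, and Proposition~\ref{prop:harnack} demands compact containment of the triple ball. Take $r$ strictly less than $R/18$ (e.g.\ $r=R/24$); nothing else in the estimate changes.
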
{}

Since an $\opA$-supersolution is always a superminimizer (see \cite[Lemma 3.7]{CZG-gOp}), we can specify~\cite[Theorem 4.3]{hh-zaa} in the following way. 
\begin{prop}[Weak Harnack estimate for $\opA$-supersolutions]\label{prop:inf-est}  Suppose $u \in W_{loc}^{1,G} (\Omega)$ is a nonnegative $\opA$-supersolution. Then for $R_H(n)>0$, $s_0=s_0(\data,n),$ and $c=c(\data,n)>0$, such that \[ \left(\barint_{B_{2 R}}u^{s_0}\,dx\right)^\frac{1}{s_0} \leq c \left({\rm ess\,inf}_{B_R} u +R\right),\]for all $R\in(0,R_H]$ provided $B_{3R}\Subset\Omega$ and $\vr_{g,B_{3R}}(|Du|)\leq 1$.\end{prop}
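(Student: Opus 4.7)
The plan is to reduce the statement to the known weak Harnack inequality for Orlicz superminimizers, namely \cite[Theorem 4.3]{hh-zaa}, exactly along the route indicated in the sentence preceding the proposition. Concretely, the first step is to verify that a nonnegative $u \in W^{1,G}_{loc}(\Omega)$ which is an $\opA$-supersolution is automatically a $G$-superminimizer, in the sense that $\int_K G(|Du|)\,dx \leq c\int_K G(|Dv|)\,dx$ whenever $v \geq u$ a.e.\ and $v-u \in W^{1,G}_0(K)$ with $K \Subset \Omega$. This is the content of \cite[Lemma 3.7]{CZG-gOp}; the short argument tests the defining weak inequality with $\phi = v - u \geq 0$, uses the coercivity bound $c_1^\opA G(|\xi|) \leq \opA(x,\xi)\cdot\xi$ from Assumption~{\bf (A)} to produce the left-hand side, and absorbs the resulting cross term through the growth bound $|\opA(x,\xi)| \leq c_2^\opA g(|\xi|)$, Young's inequality~\eqref{in:Young}, and the equivalence $\wt G(g(t)) \simeq G(t)$ from Lemma~\ref{lem:equivalences}.

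With the superminimizer property in hand, the second step is to apply \cite[Theorem 4.3]{hh-zaa} to $u+R$ on $B_{2R}$. That theorem provides precisely the desired reverse-H\"older type bound $\bigl(\barint_{B_{2R}} u^{s_0}\,dx\bigr)^{1/s_0} \leq c\bigl({\rm ess\,inf}_{B_R}u + R\bigr)$ for Orlicz superminimizers under a $\Delta_2 \cap \nabla_2$ modular of the kind considered here. Its proof proceeds by Moser iteration on negative powers: one tests the superminimizer inequality against $\phi = (u+R)^{1-q}\eta^{s_G}$ with a suitable cut-off $\eta$, derives a Caccioppoli-type bound (including one for $\log(u+R)$ handling the borderline exponent), and iterates upward on the integrability exponent using the Orlicz--Sobolev--Poincar\'e embedding of Proposition~\ref{prop:Poincare}. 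The extra additive~$R$ inside the infimum avoids degeneracy of $\log(u+R)$ near zero and compensates for the lack of homogeneity of~$G$.

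The main technical point, and the reason for the modular smallness hypothesis $\vr_{g,B_{3R}}(|Du|) \leq 1$, is that in the Orlicz regime one cannot simply rescale balls to unit size: the constants appearing in the Caccioppoli estimate and in each iteration step depend on the modular of $|Du|$ on the ball, and these must be kept bounded uniformly along the countable iteration. By Remark~\ref{rem:g-intergrability-of-Ah} this smallness can always be arranged by passing to a finite cover by balls of sufficiently small radius, so the hypothesis acts as a localization rather than a genuine restriction. Modulo this bookkeeping, the claim follows directly from the two-step reduction outlined above.
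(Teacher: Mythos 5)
Your proposal is correct and matches the paper's own route exactly: the paper likewise derives the proposition by noting that $\opA$-supersolutions are superminimizers via \cite[Lemma 3.7]{CZG-gOp} and then specializing \cite[Theorem 4.3]{hh-zaa}, with the modular smallness hypothesis playing precisely the localizing role you describe.
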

Let us note that after we completed our manuscript, the weak Harnack inequalities with an explicit exponent for unbounded supersolutions was proven in~\cite{bhhk}.

\subsection{Properties of the Poisson modification}\label{ssec:Pois}
The {\em Poisson modification} of an $\opA$-superharmonic function in a regular set $E$ carries the idea of local smoothing of $\opA$-superharmonic functions. A boundary point is called regular if at this point the boundary value of any Orlicz-Sobolev function is attained not only in the Sobolev sense but also pointwise.  See~\cite{hh-zaa} for the  result (in the generalized Orlicz case) that if the complement of $\Omega$ is locally fat at $x_0\in\partial\Omega$ in the capacity sense, then $x_0$ is regular. A set is called regular if all of its boundary points are regular. In particular, polyhedra, balls $B_R$ and annuli $B_{R}\setminus B_{\ve R},$ $\ve\in (0,1)$ are regular.

Let us consider a function $u$, which is $\opA$-superharmonic and finite a.e. in $\Omega$ and a  open set $E\Subset\Omega$ with regular $\overline{E}$. We define
\[u_E=\inf\{v:\ v \ \text{is $\opA$-superharmonic in $E$ and }\liminf_{y\to x}v(y)\geq u(x)\ \text{for each }x\in\partial \overline{E}\}\]
and the {\em Poisson modification} of $u$ in $E$ by
\begin{equation}
    \label{PuD}
P(u,E)=\begin{cases}
u\quad&\text{in }\ \Omega\setminus E,\\
u_E &\text{in }\  E.
\end{cases}{}
\end{equation}{}
The Poisson modification has the following nice properties.
\begin{prop}[Theorem 3,~\cite{CZG-gOp}]\label{prop:Pois} If $u$ is $\opA$-superharmonic {and finite a.e.} in $\Omega$, then its Poisson modification $P(u,E)$ is  $\opA$-superharmonic in~$\Omega$,  $\opA$-harmonic in $E$, and $P(u,E)\leq u$ in~$\Omega.$
\end{prop}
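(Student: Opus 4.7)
The natural order is (3), then (2), then (1): first control $P(u,E)$ from above by $u$, then identify $u_E$ with a Perron-type solution (which yields $\opA$-harmonicity in $E$), and finally combine these to verify the comparison property that characterizes $\opA$-superharmonicity in $\Omega$. For (3), observe that $u$ itself qualifies as a competitor in the infimum defining $u_E$: it is $\opA$-superharmonic in $E$, and since $u$ is lower semicontinuous on $\Omega$ one has $\liminf_{y\to x} u(y)\geq u(x)$ for every $x\in\partial\overline E$. Hence $u_E\leq u$ in $E$, while $P(u,E)=u$ in $\Omega\setminus E$.

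For (2), I would realize $u_E$ as a monotone limit of Dirichlet solutions. Because $u$ is lower semicontinuous and finite a.e., one can pick an increasing sequence $\{\varphi_k\}\subset C(\partial\overline E)$ with $\varphi_k\nearrow u$ on $\partial\overline E$; extending each $\varphi_k$ continuously to $\overline E$, Proposition~\ref{prop:ex-Ahf} (with regularity of $\overline E$) produces a unique $\opA$-harmonic $h_k\in W^{1,G}(E)\cap C(\overline E)$ with $h_k=\varphi_k$ on $\partial\overline E$. The Comparison Principle (Lemma~\ref{lem:cp}) gives $h_k\leq h_{k+1}\leq u$, so Proposition~\ref{prop:har-princ} yields an $\opA$-harmonic limit $h=\lim h_k$ in $E$ (finiteness is inherited from $h\leq u$). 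Any competitor $v$ in the definition of $u_E$ satisfies $\liminf v\geq u\geq\varphi_k$ on $\partial\overline E$, hence $v\geq h_k$ by Lemma~\ref{lem:cp} applied to their difference near the boundary, and passing to the limit gives $v\geq h$; conversely $h$ itself is a competitor (using regularity of $\overline E$ to check the boundary liminf), so $u_E=h$ is $\opA$-harmonic in $E$.

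For (1), the first task is lower semicontinuity of $P(u,E)$ on $\Omega$. Inside $\Omega\setminus\overline E$ this is inherited from $u$; inside $E$ it follows from continuity of the harmonic $u_E$; at points $x\in\partial E$, one must show $\liminf_{y\to x}P(u,E)(y)\geq u(x)$. From outside $E$ this is immediate from lower semicontinuity of $u$, and from inside $E$ it follows from the Perron construction combined with regularity of $\partial E$ (barrier argument using $\varphi_k\to u$ and $h_k\to u_E$). With lower semicontinuity in hand, fix $K\Subset\Omega$ and an $\opA$-harmonic $\varphi\in C(\overline K)$ with $\varphi\leq P(u,E)$ on $\partial K$. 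In $K\setminus\overline E$ the function $P(u,E)=u$ is $\opA$-superharmonic, and in $K\cap E$ both $\varphi$ and $P(u,E)=u_E$ are $\opA$-harmonic; applying the Comparison Principle to $\varphi$ and $P(u,E)$ on each of the two open regions (using lower semicontinuity across $\partial E\cap K$ to control boundary traces on $\partial(K\cap E)$ and $\partial(K\setminus\overline E)$) yields $\varphi\leq P(u,E)$ throughout $K$.

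The main obstacle is the boundary behavior at $\partial E$: one needs the two determinations of $P(u,E)$ to match in a lower-semicontinuous way, which is where the regularity hypothesis on $\overline E$ is crucial. Concretely, one must verify that the Perron limit $h$ attains the lsc boundary datum $u|_{\partial\overline E}$ from the inside, i.e. $\liminf_{y\to x,\,y\in E}u_E(y)\geq u(x)$ for every $x\in\partial\overline E$. This requires a barrier at each regular boundary point together with a careful passage to the limit in the approximation $\varphi_k\nearrow u$; once this is secured, the superharmonicity in $\Omega$ and the comparison inequality $P(u,E)\leq u$ both fall out cleanly from the tools already recorded in Section~\ref{ssec:aux}.
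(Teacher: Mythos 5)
This result is quoted from Theorem~3 of~\cite{CZG-gOp} and is not proven in the present paper, so I assess your outline on its own. The overall architecture — show $u_E\le u$ by exhibiting $u$ as a competitor, realize $u_E$ as a Perron limit of Dirichlet solutions, establish lower semicontinuity at $\partial E$ via regularity of $\overline E$, then check the comparison property — is the right one, and part~(3) is correct. However, you repeatedly invoke Lemma~\ref{lem:cp} to compare a continuous $\opA$-harmonic function ($h_k$ or $\varphi$) with a general $\opA$-superharmonic object ($u$, a competitor $v$, or $P(u,E)$). Lemma~\ref{lem:cp} is a Sobolev comparison principle for sub- and supersolutions in $W^{1,G}_{loc}$; a general $\opA$-superharmonic function need not lie in $W^{1,G}_{loc}$ (only its truncations do, cf.\ Lemma~\ref{lem:loc-bdd-superharm-are-supersol}), so the lemma is not available. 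The inequalities you want — e.g.\ $h_k\le u$ and $v\ge h_k$ in $E$ — must instead come from the \emph{definition} of $\opA$-superharmonicity (comparison on compacts against continuous $\opA$-harmonic functions), which requires converting $\liminf_{y\to x}v(y)\ge\varphi_k(x)$ on $\partial\overline E$ into comparison over an exhaustion of $E$; that is a different, though elementary, argument.

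The more substantial gap is in~(1). Splitting $K$ into $K\setminus\overline E$ and $K\cap E$ and ``applying the Comparison Principle on each'' does not go through: $K\cap E$ is in general \emph{not} compactly contained in $E$, so the comparison definition of $\opA$-superharmonicity of $u_E$ cannot be applied on $K\cap E$ directly, and on $\partial E\cap K$ you control $u_E$ only through a $\liminf$, not pointwise, so no result cited in this paper applies as stated. The repair — and, presumably, the route in~\cite{CZG-gOp}, following the $p$-harmonic template of~\cite{hekima} — is to first establish a pasting lemma: if $u$ is $\opA$-superharmonic in $\Omega$, $v$ is $\opA$-superharmonic in an open $D\subset\Omega$, and the function equal to $\min(u,v)$ in $D$ and to $u$ in $\Omega\setminus D$ is lower semicontinuous, then it is $\opA$-superharmonic in $\Omega$. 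With $D=E$ and $v=u_E$ (so $\min(u,v)=u_E$ by part~(3)), this reduces superharmonicity of $P(u,E)$ to its lower semicontinuity across $\partial E$ — precisely the barrier/boundary-matching step you flagged as the main obstacle, which your proposal asserts but does not carry out.
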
{}

\subsection{Caccioppoli-type estimate}


\begin{prop}\label{prop:caccI} Suppose $v \in W_0^{1,G} (\Omega)$ is a nonnegative $\opA$-subsolution and a~cutoff function $\eta\in C_0^\infty(B_{2R})$ is such that $\mathds{1}_{B_R}\leq\eta\leq \mathds{1}_{B_{2R}}$ and $|D\eta|\leq c/R$. For any $q\geq s_G$ there exists $c>0$, such that  
\begin{equation}
\label{inq:caccI} 
\int_{B_{ 2R}}G(|Dv|)\eta^q\,dx\leq c \int_{B_{2R}}G\left({v}|D\eta|\right)
\,dx.
\end{equation}
\end{prop}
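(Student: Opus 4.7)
The plan is to test the subsolution inequality~\eqref{eq:sub} against $\phi = v\eta^q$, which is a legitimate nonnegative test function because $v \in W^{1,G}_0(\Omega)$ and $\eta \in C_0^\infty(B_{2R})$, so $v\eta^q \in W^{1,G}_0(\Omega)$ (the subsolution inequality extends from $C_0^\infty$ test functions to $W^{1,G}_0$ test functions by density, using the growth bound on $\opA$ together with H\"older's inequality in Orlicz spaces). Computing $D(v\eta^q) = \eta^q Dv + q v \eta^{q-1} D\eta$ and applying the growth conditions~\eqref{ass-op} gives
\[
c_1^\opA \int_{B_{2R}} G(|Dv|)\,\eta^q\,dx \;\leq\; c_2^\opA q \int_{B_{2R}} g(|Dv|)\,\eta^{q-1}\,v\,|D\eta|\,dx.
\]

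Next I would bound the right-hand side by Young's inequality in Orlicz form applied to the pair $A=\eta^{q-1} g(|Dv|)$ and $B=v|D\eta|$. The scaled Young inequality combined with $G\in\Delta_2\cap\nabla_2$ gives, for any $\epsilon\in(0,1)$,
\[
AB \;\leq\; \epsilon^{i_{\wt G}}\,\wt G(A) \;+\; \epsilon^{-s_G}\,G(B),
\]
where the exponents come from the power comparison~\eqref{comp-i_G-s_G} for $\wt G$ (for $\lambda\in(0,1)$ one has $\wt G(\lambda t)\leq \lambda^{i_{\wt G}}\wt G(t)$, and for $\lambda>1$ one has $G(\lambda t)\leq \lambda^{s_G}G(t)$). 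Then, using the identity $\wt G(g(t))\simeq G(t)$ from Lemma~\ref{lem:equivalences} and again the power-comparison for $\wt G$,
\[
\wt G\bigl(\eta^{q-1} g(|Dv|)\bigr) \;\leq\; \eta^{(q-1) i_{\wt G}}\,\wt G(g(|Dv|)) \;\leq\; c\,\eta^{(q-1) i_{\wt G}}\,G(|Dv|).
\]

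The arithmetic step that explains the hypothesis $q\geq s_G$ is the inequality $(q-1)i_{\wt G}\geq q$, which, since $i_{\wt G}=s_G/(s_G-1)$, is equivalent exactly to $q\geq s_G$; using $\eta\leq 1$, we then upgrade $\eta^{(q-1)i_{\wt G}}\leq \eta^q$. Combining everything yields
\[
c_1^\opA \int_{B_{2R}} G(|Dv|)\eta^q\,dx \;\leq\; c\,\epsilon^{i_{\wt G}}\int_{B_{2R}} G(|Dv|)\eta^q\,dx \;+\; c_\epsilon \int_{B_{2R}} G(v|D\eta|)\,dx,
\]
after which one chooses $\epsilon$ small enough that the first term on the right is absorbed into the left-hand side.

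The only genuine technical point is the juggling of the exponent of $\eta$: one must keep $\eta^{q-1}$ inside $\wt G$, pull it out losing the factor $\eta^{(q-1)i_{\wt G}}$, and verify that this factor dominates $\eta^q$ precisely when $q\geq s_G$. Everything else (admissibility of $v\eta^q$ as test function, the Young step, the $\Delta_2$/$\nabla_2$ power comparisons, and the absorption) is routine inside the Orlicz framework recalled in Section~\ref{sec:prelim}.
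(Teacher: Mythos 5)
Your proof is correct and follows essentially the same route as the paper: test with $v\eta^q$, apply coercivity and the growth bound on $\opA$, split via Orlicz--Young with $\wt G$ and $G$, pull the cutoff power out of $\wt G$ using the $\nabla_2$ power comparison, and absorb. The only cosmetic difference is that you spell out the arithmetic $(q-1)i_{\wt G}\ge q \Leftrightarrow q\ge s_G$, which the paper states more tersely as $s_G'\ge q'$ (and the paper's displayed $\wt G(\eta^{q-1}|Dv|)$ is a small typo for $\wt G(\eta^{q-1}g(|Dv|))$, which you handle correctly).
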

\begin{proof} We test~\eqref{eq:sub} with $\xi=\eta^q v$ to get
\[\int_\Omega \opA(x,Dv)\cdot Dv\,\eta^q \,dx \leq-q\int_\Omega \opA(x,Dv)\cdot D\eta\, \eta^{q-1}v \,dx
.\]
Therefore, due to coercivity of $\opA$ and the Cauchy-Schwartz inequality we have
\begin{equation*}\int_{B_{2R}}G(|Dv|)\eta^q\,dx\leq c\int_{B_{2R}}g(|Dv|)|D\eta|\eta^{q-1}{v}
\,dx=:\mathcal{K}
\end{equation*}
Noting that $q$ is large enough to satisfy $s_G'\geq q'$, we have in turn that $\wt{ G}(\eta^{q-1}t)\leq c \eta^q\wt{G}(t)$  and  \[\wt{G}(\eta^{q-1}g(t))\leq c \eta^q \wt{G}(g(t))\leq c \eta^q G(t).\] 
Then,  using Young inequality~\eqref{in:Young} applied to the integrand of $\mathcal{K}$ we get
\begin{equation*}
\begin{split}\mathcal{K}&\leq \ve \int_{B_{2R}}\wt{G}(\eta^{q-1}|Dv|)\, dx+ c_\ve \int_{B_{2R}}G\left({v}|D\eta|\right)\,
dx\\& \leq \ve c\int_{B_{2R}} \eta^{q }G(|Dv| ) \,dx+ c_\ve \int_{B_{2R}}G\left({v}|D\eta|\right)\,
dx
\end{split}
\end{equation*}
with arbitrary $\ve<1$. Choosing $\ve$ small enough to absorb the term, and noticing that $\mathds{1}_{B_R}\leq\eta\leq \mathds{1}_{B_{2R}},$ we obtain~\eqref{inq:caccI}.
\end{proof}

\section{Main proof} \label{sec:mainproof}
The organization of this section is as follows. Subsection~\ref{ssec:reductions} provides a bunch of remarks on our proof. In Subsection~\ref{ssec:lower-bound} we prove the lower bound, while in Subsection~\ref{ssec:upper-bound}
 the upper bound.

\subsection{Reductions and remarks on the proof}\label{ssec:reductions}
 Our main steps follow the scheme of~\cite{KoKu,tru-wa} and essentially employ nonstandard growth potential theory tools coming from recent paper~\cite{CZG-gOp} presented in Section~\ref{ssec:aux}. 

We justify here that without loss of generality of the result itself, we can significantly simplify our proof. Namely, it is enough to prove Theorem~\ref{theo:est-sol} for continuous $\opA$-supersolutions. We recall that Section~\ref{ssec:A-sh-gen-meas} explains how an $\opA$-superharmonic function generates a measure $\mu_u$.

 To motivate that in our proof $u$ can be assumed to be continuous let us remark that it is  lower semicontinuous by the very  definition of an $\opA$-superharmonic function. To find approximation from below by continuous functions we proceed as in the proofs of~\cite[Proposition~4.5 and Lemma~4.6]{CZG-gOp}.  Let us consider a nondecreasing sequence $\{\phi_j\}_j$ of {nonnegative} Lipschitz functions converging pointwise to $u$. Considering the Dirichlet problems with obstacles $\phi_j,$ $j=1,2,\dots,$ and boundary datum $u$, we get the sequence $\{u_j\}_j$ of nonnegative continuous $\opA$-supersolutions converging to $u$ pointwise with $Du_{j}\to Du$ a.e. for some non-relabelled subsequence and generalized gradient `$D$'. For well-posedness and basic properties of the obstacle problem see~\cite[Section~4]{ChKa} and~\cite{kale}. By Lemma~\ref{lem:cp}, the sequence  $\{u_j\}$ is nondecreasing. 
 Then for every $j$, we have that $\{T_k u_j\}_k$  is a nondecreasing sequence of continuous functions converging to $u_j$ and by 
 Lemma \ref{lem:mu-great} they generate a sequence of measures
 $\{\mu_{T_k u_j}\}_k\subset (W^{1,G}_0(\Omega))'$. 
 Note that $\{\mu_{T_k u_j}\}_k$  locally converge weakly-$*$ to $\mu_{u_j}$. Indeed,  $u_j$ is locally bounded and therefore, by Proposition~\ref{prop:caccI}, we infer that  $\{\vr_{G,\Omega'}(|D T_k u_j|)\}_k$ is uniformly bounded for every $\Omega'\Subset\Omega$. Then by Lemma~\ref{lem:doubling-norm}, we get that 
 $\{T_k u_j\}_k$ is locally uniformly bounded in $ W^{1,G}_0(\Omega)$  and, consequently, we may pass to its (non-relabelled) weakly convergent subsequence {in $ W^{1,G}_0(\Omega)$}.  Thus, when we fix arbitrary $\xi\in W^{1,G}_0(\Omega)$ with $\supp\,\xi\Subset\Omega$, then  reasoning as in \cite[Lemma~4.6]{CZG-gOp}, we have 
{$$\opA(\cdot, DT_k{u_j})\rightharpoonup{}\opA(x, D{u_j})\quad \text{weakly in}\quad (L^{\wt G}(\Omega))^n$$}
{and hence}
 \begin{flalign*}  \lim_{k\to\infty}\int_\Omega \xi \, d\mu_{T_k u_j}&=\lim_{k\to\infty}\int_\Omega \opA(x, D{T_k u_j})\cdot D\xi\,dx\\&=\int_\Omega \opA(x, D{u_j})\cdot D\xi\,dx  =\int_\Omega \xi\, d\mu_{u_j}.\end{flalign*}
 Choosing diagonally subsequence of $\{T_k u_j\}_{k,j}$, we get a nondecreasing sequence $\{u^i\}_i$ of continuous and bounded $\opA$-supersolutions converging pointwise to~$u$ and such that $Du^i\to Du$ a.e. in $\Omega$. Then the corresponding measures $\mu_{u^{i}}$  locally converge  weakly-$\ast$ to $\mu_{u}$ in the space of measures.


{\em Lower bound.} We note that
\[\mu_{u}(\overline{B(x_0,r)})\leq \liminf_{i\to\infty} \mu_{u^{i}}(\overline{B(x_0,r)}).\]
When we take $R$ as in the hypothesis and $\ve\ll R$ we have\[ 
\mathcal{W}^{\mu_u}_G(x_0,R)\leq \int_0^{R+\ve} g^{-1}\left(\frac{\mu_u(\overline{B(x_0,r)})}{r^{n-1}}\right)\,dr.\] 
By making use of the above facts, Fatou's lemma, and finally sending $\ve\to 0$ we get the upper estimate.

{\em Upper bound.} We use analogous arguments and the observation that
\[\limsup_{i\to\infty} \mu_{u^{i}}(\overline{B(x_0,r)})\leq  \mu_{u}(\overline{B(x_0,r)}).\]

\subsection{Proof of lower bound in Theorem~\ref{theo:est-sol}} \label{ssec:lower-bound}
{In Section~\ref{ssec:reductions} we motivate that it is enough to prove~\eqref{eq:u-est} for $u$ being continuous and bounded $\opA$-supersolutions.}
\begin{proof} Fix $R\in (0,R_{\cW}/2)$, then $B(x_0,2R)\Subset \Omega$. We set \begin{equation}
    \label{RkBk}
R_k=2^{1-k}R\quad\text{and}\quad B_k=B(x_0,R_k),\ \ k=0,1,\dots\,.
\end{equation}
Since $u$ is an $\opA$-supersolution, then by Lemma~\ref{lem:mu-great} there exists a~nonnegative measure $\mu_u\in (W^{1,G}_0(B_k))'$ such that\[-\dv \opA(x,D u)=\mu_u\geq 0.\]
Then having $\theta_k\in C_0^\infty(\tfrac{5}{4}B_{k+1})$ such that $\mathds{1}_{B_{k+1}}\leq \theta_k\leq \mathds{1}_{\frac{5}{4}B_{k+1}}$, we set
\begin{equation*}
\mu_{w_k}:=\theta_k\mu_u\quad\text{in }\ B_k.
\end{equation*}
Note that \begin{equation}
    \label{muu=muwk}\mu_{w_k}(B_{k+1})=\mu_u(B_{k+1}).
\end{equation} Moreover, we have $\mu_{w_k}\in (W^{1,G}_0(B_k))'$. Therefore, by Remark~\ref{rem:weak-sol}, there exists $w_k\in W^{1,G}_0(B_k)$ being a weak solution to \begin{equation}
\label{eq:muwk'}-\dv \opA(x,Dw_k)=\mu_{w_k}\quad\text{in }\ B_k.
\end{equation} 
Since  $w_k$ is an $\opA$-supersolution, by Comparison Principle (Lemma~\ref{lem:cp}) we get that $w_k\geq 0$. Taking into account the support of $\theta_k$, we notice that $w_k$ is $\opA$-harmonic in $B_k\setminus\overline{\tfrac{5}{4}B_{k+1}}.$ Let us note that\[(w_k-u+\miu)_+\in W^{1,G}_0(B_k).\]
By testing the equations for $u$ and for $w_k$ against this function and then subtracting, we arrive at
\begin{flalign*}
0&\leq \int_{B_k}(w_k-u+\miu)_+\,d\mu_u-\int_{B_k}(w_k-u+\miu)_+\,d\mu_{w_k}\\
&=\int_{B_k}\Big( \opA(x,Du)-\opA(x,Dw_k)\Big)\cdot D(w_k-u+\miu)_+\,dx\\
&=-\int_{B_k\cap \{w_k-u+\miu\geq 0\}}\Big( \opA(x,Du)-\opA(x,Dw_k)\Big)\cdot\big(Du-Dw_k\big)\,dx\leq 0,
\end{flalign*}
where the last inequality follows from the monotonicity of $\opA$. In turn, we directly infer that $D(w_k-u+\miu)_+= 0$ in $B_k$, so
\begin{equation}
    \label{wk-mniejsze}
w_k\leq u-\miu\quad\text{in }\ B_k.
\end{equation}
Take any \[\text{$\phi\in C_0^\infty(B_k)\ $ such that $\ \mathds{1}_{\frac 23 B_k}\leq \phi\leq \mathds{1}_{B_{k}}\ $ and $\ |D\phi|\leq c/R_k$.}\] Then by Maximum Principle (Proposition~\ref{prop:max-princ}) for $w_k$ being $\opA$-harmonic  in $\supp\,D\phi$  \begin{equation}
    \label{wk-in-2/3Bk}
w_k(x)=\min\{w_k(x),\max_{\partial\frac 23 B_k} w_k\}\quad\text{in }\ \supp\,D\phi
\end{equation}
and by Minimum Principle (Proposition~\ref{prop:mini-princ})  \begin{flalign}\label{i-IV-1}\min_{\partial \frac 23 B_k}w_k\leq\min \{w_k(x),\max_{\partial\frac 23 B_k} w_k\}.\end{flalign} Recall that $\min_{\partial\frac 23 B_k} w_k+R_k>0$.
Taking into account~\eqref{muu=muwk}, \eqref{i-IV-1} and extending the domain of integration one can estimate\begin{flalign*}
\mathcal{I}_0&:=\Big(\min_{\partial\frac 23 B_k} w_k+R_k\Big)\mu_u(B_{k+1})\\
&=
\Big(\min_{\partial\frac 23 B_k} w_k+R_k\Big)\mu_{w_k}(B_{k+1})\\
&\leq \int_{ B_k} (\min\{w_k(x),\max_{\partial\frac 23 B_k} w_k\}+R_k)\phi^q\,d\mu_{w_k}(x)=:\mathcal{I}_1\end{flalign*}
Take any $q\geq s_G$. Since $\big((\min\{w_k(x),\max_{\partial\frac 23 B_k} w_k\}+R_k) \phi^q\big)\in W^{1,G}_0(B_k)$, it is an admissible test function in~\eqref{eq:muwk'}, and therefore 
\begin{flalign*}
\mathcal{I}_1&= \int_{ B_k} \opA(x,Dw_k)\cdot D \big((\min\{w_k(x),\max_{\partial\frac 23 B_k} w_k\}+R_k) \phi^q\big)\,dx\\
&= \int_{ B_k\cap \big\{w_k\leq \max_{\partial\frac 23 B_k} w_k\big\}} \opA(x,Dw_k)\cdot D\big( (\min\{w_k(x),\max_{\partial\frac 23 B_k} w_k\}+R_k) \phi^q\big)\,dx\\
&= \int_{ B_k\cap \{w_k\leq \max_{\partial\frac 23 B_k} w_k\}} \opA(x,Dw_k)\cdot D \big((w_k+R_k)\, \phi^q\big)\,dx\\
&=\int_{ B_k\cap \{w_k\leq \max_{\partial\frac 23 B_k} w_k\}} \opA(x,Dw_k)\cdot D w_k\, \phi^q\,dx\\
&\quad +q\int_{ B_k\cap \{w_k\leq \max_{\partial\frac 23 B_k} w_k\}} \opA(x,Dw_k)\cdot D\phi \, \phi^{q-1}\, (w_k+R_k)\,dx
=:\mathcal{I}_2,\end{flalign*}
where we used \eqref{wk-in-2/3Bk}. By the Schwartz inequality, growth conditions, and Lemma~\ref{lem:equivalences}, we get
\begin{flalign*}
\mathcal{I}_2&\leq  \int_{ B_k\cap \big\{w_k\leq \max_{\partial\frac 23 B_k} w_k\big\}} |\opA(x,Dw_k)|\,|D w_k| \phi^q\,dx\\
&\quad + q\int_{ B_k\cap \big\{w_k\leq \max_{\partial\frac 23 B_k} w_k\big\}} |\opA(x,Dw_k)|\, |D\phi|\phi^{q-1}|\min\{w_k(x),\max_{\partial\frac 23 B_k} w_k\}+R_k |\,dx\\
&\leq c \int_{ B_k\cap \big\{w_k\leq \max_{\partial\frac 23 B_k} w_k\big\}} G(|Dw_k|) \phi^q\,dx\\
&\quad + c\int_{ B_k } g(|D(\min\{w_k(x),\max_{\partial\frac 23 B_k} w_k\})|)\, |D\phi|\phi^{q-1}|\min\{w_k(x),\max_{\partial\frac 23 B_k} w_k\}+R_k |\,dx\\
&\leq c \int_{ B_k} G(|D(\min\{w_k(x),\max_{\partial\frac 23 B_k} w_k\})|) \phi^q\,dx\\
&\quad + c\int_{ B_k } g(|D(\min\{w_k(x),\max_{\partial\frac 23 B_k} w_k\})|)\phi^{q-1}\, \Big(\max_{\partial\frac 23 B_k} w_k +R_k\Big)|D\phi| \,dx\\
&=: \mathcal{I}_3^1+\mathcal{I}_3^2.\\
\end{flalign*}
To estimate $\mathcal{I}_3^1$ we note that $\min\{w_k, \max_{\partial \frac 23 B_k}w_k\}$ is $\opA$-supersolution, so \[\Big(\max_{\partial \frac 23 B_k}w_k-\min\big\{w_k, \max_{\partial \frac 23 B_k}w_k\big\}\Big)\quad\text{is a nonnegative $\opA$-subsolution,}\] and hence by the Caccioppoli estimate (Proposition~\ref{prop:caccI}) for this function we get
\begin{flalign*}
\mathcal{I}_3^1&=c \int_{ B_k} G(|D(\min\{w_k(x),\max_{\partial\frac 23 B_k} w_k\})|) \phi^q\,dx\\
&= c\int_{ B_k} G(|D(\max_{\partial\frac 23 B_k}w_k-\min\{w_k(x),\max_{\partial\frac 23 B_k} w_k\})|) \phi^q\,dx\\
 &\leq c \int_{ B_k} G((\max_{\partial\frac 23 B_k}w_k-\min\{w_k(x),\max_{\partial\frac 23 B_k} w_k\}\, )|D\phi|)\,dx \\
 &\leq c \int_{\supp\,D\phi} G\left(\frac{\max_{\partial\frac 23 B_k}w_k}{R_k}\right)\,dx =:\mathcal{I}_4^1,
\end{flalign*}where we used also doubling properties of $G$. As $w_k$ is $\opA$-harmonic on $\supp\,D\phi$, we can use the 
Harnack  inequality (Proposition~\ref{prop:int-sph-harn}) to end with
\[\mathcal{I}_4^1\leq c \int_{\supp\,D\phi} G\left(\frac{\min_{\partial\frac 23 B_k}w_k+R_k}{R_k}\right)\,dx\leq c R_k^n   G\left(\frac{\min_{\partial\frac 23 B_k}w_k+R_k}{R_k}
\right)=:\mathcal{I}_5^1.\]
In order to estimate $\mathcal{I}_3^2$, we use Young's inequality, arguments contained in the proof of Proposition~\ref{prop:caccI}, and Lemma~\ref{lem:equivalences} to get
\begin{flalign*}\mathcal{I}_3^2&=c\int_{ B_k } g(|D(\max_{\partial\frac 23 B_k}w_k-\min\{w_k(x),\max_{\partial\frac 23 B_k} w_k\})|)\phi^{q-1}\, \big(\max_{\partial\frac 23 B_k} w_k +R_k\big)|D\phi| \,dx\\
&\leq c \int_{ B_k } \wt G\left(g\big(|D(\max_{\partial\frac 23 B_k}w_k-\min\{w_k(x),\max_{\partial\frac 23 B_k} w_k\})|\big)\phi^{q-1}\right)\,dx\\
&\quad +c \int_{ B_k } G\bigg((\max_{\partial\frac 23 B_k}w_k+R_k)\, |D\phi|\bigg) \,dx,\\
&\leq c \int_{ B_k }  G\left(|D(\max_{\partial\frac 23 B_k}w_k-\min\{w_k(x),\max_{\partial\frac 23 B_k} w_k\})|\right)\phi^{q}\,dx\\
&\quad +c \int_{ B_k } G\bigg((\max_{\partial\frac 23 B_k}w_k+R_k)\, |D\phi|\bigg) \,dx=:\mathcal{I}_4^2,\end{flalign*}
Then, as in the case $\mathcal{I}_3^1\leq\mathcal{I}_5^1$ above, by Propositions~\ref{prop:caccI} and~\ref{prop:int-sph-harn},  we obtain
\begin{flalign*}\mathcal{I}_4^2&\leq c \int_{\supp\,D\phi} G\left(\frac{\min_{\partial\frac 23 B_k}w_k+R_k}{R_k}\right)\,dx \leq c R_k^n   G\left(\frac{\min_{\partial\frac 23 B_k}w_k+R_k}{R_k}
\right)=c\,\mathcal{I}_5^1.\end{flalign*} 

Summing it up, we get $\mathcal{I}_0\leq \mathcal{I}_3^1 +\mathcal{I}_3^2\leq \bar{c}\, \mathcal{I}_5^1$, which by  Lemma~\ref{lem:equivalences} implies\begin{flalign*}
\mu_u(B_{k+1})&\leq c\, R_k^{n-1} \frac{R_k}{\min_{\partial\frac 23 B_k} w_k+R_k} G\left(\frac{\min_{\partial\frac 23 B_k}w_k+R_k}{R_k}\right)\\
&\leq c\, R_k^{n-1} g\left(\frac{\min_{\partial\frac 23 B_k}w_k+R_k}{R_k}\right).\end{flalign*}
In~\eqref{wk-mniejsze} we noticed that $w_k\leq u-\miu$ in $B_k$, so we have \[
\mu_u(B_{k+1})\leq c\, R_k^{n-1} g\left(\frac{\min_{\partial B_{k+1}}u-\miu+R_k}{R_k}\right)\]
and finally \begin{equation}
\label{g-est}
R_k g^{-1}\left(\frac{\mu_u(B_{k+1})}{R_k^{n-1}}\right)\leq c({\min_{\partial B_{k+1}}u-\miu+R_k} ).
\end{equation}
Having $R_k$ as in~\eqref{RkBk} we estimate
\begin{flalign}\nonumber
\int_0^Rg^{-1}\left(\frac{\mu_u(B(x_0,r))}{r^{n-1}}\right) dr&=\sum_{k=1}^{\infty}\int_{R_{k+1}}^{R_k} g^{-1}\left(\frac{\mu_u(B(x_0,r))}{r^{n-1}}\right) dr\\\nonumber
& \leq\sum_{k=1}^{\infty}\int_{R_{k+1}}^{R_j} g^{-1}\left(\frac{\mu_u(B(x_0,R_k))}{R_{k+1}^{n-1}}\right) dr \\\nonumber
&\leq \sum_{k=1}^{\infty} (R_k-{R_{k+1}}) g^{-1}\left(\frac{4^{n-1}\mu_u(B_{k})}{R_{k-1}^{n-1}}\right)\\
&\leq c\sum_{j=1}^{\infty}\frac{R_{k-1}}{4}\, g^{-1}\left(\frac{\mu_u(B_{k})}{R_{k-1}^{n-1}}\right).\label{series}
\end{flalign}
Therefore by~\eqref{g-est} we get
\begin{flalign*}\int_0^Rg^{-1}\left(\frac{\mu_u(B(x_0,r))}{r^{n-1}}\right) dr&\leq \sum_{k=1}^{\infty}  \frac{c}{4}\,(\min_{\partial B_{R_{k}}}u-\min_{\partial B_{R_{k-1}}}u+R_{k-1})\\
&=c (\lim_{k\to\infty}\min_{\partial B_{R_{k}}}u+R). 
\end{flalign*}
On the other hand, Minimum Principle from Proposition~\ref{prop:mini-princ} for $u$ being $\opA$-supersolution in $B_{k+1}$  yields 
\[\min_{\partial B_{k+1}}u=\min_{B_{k+1}}u\leq  u(x_0)\,.\] 
Therefore, recalling the definition of $\cW^{\mu_u}_G$ given in~\eqref{Wolff-potential}, we can conclude with
\[C_L(\cW^{\mu_u}_G(x,R)-R)\leq u(x_0),\qquad C_L=C_L(\data,n),\]
which ends the proof of the lower bound in~\eqref{eq:u-est}.
\end{proof}

\subsection{Proof of upper bound in Theorem~\ref{theo:est-sol}}\label{ssec:upper-bound}
{Let us remind that in Section~\ref{ssec:reductions} we motivate that it is enough to prove~\eqref{eq:u-est} for $u$ being continuous and bounded $\opA$-supersolutions.}

The main idea of the proof of the upper bound is to modify u to be a weak solution in a countable union of disjoint annuli shrinking to the
reference point $x_0$. For this purpose, we construct a Poisson's modification of $u$ over a family of annuli, see Section~\ref{ssec:Pois} for its basic properties.  The corresponding measure in each annulus concentrates on the boundary of the particular annulus, but in a way we can control, since the measure corresponding to the new solution stays also in the dual of $W^{1,G}(B(x_0,R))$. Since being a
solution is a local property, we are equipped with a priori estimates for weak solutions in each annulus.

\begin{proof}  Our aim now is to compare an $\opA$-supersolution $u$ with its Poisson modification. As previously let us fix $R\in (0,R_{\cW}/2)$, so $B(x_0,2R)\Subset \Omega$. We set again \begin{equation}
    \label{RkBkU} R_k=2^{1-k}R\quad\text{ and }\quad B_k=B(x_0,R_k),\ \ k=0,1,\dots\,.
\end{equation} 

{\em Step 1. Construction of relevant Poisson's modification.} Namely, we modify $u$ in the union of annuli around a~chosen point $x_0$ in the following way. We denote\[\omega=\bigcup_{k=1}^\infty \left((\tfrac{3}{2}B_k)\setminus\overline{B_k}\right).\]
Further, we define a Poisson modification $v=P(u,\omega)$, see~\eqref{PuD}, and we use   Proposition~\ref{prop:Pois} to have that \begin{equation}
    \label{P-u-omega} 
\begin{cases}\text{$v$ is $\opA$-harmonic in }\ \omega,\\ v=u\ \text{ otherwise.}\end{cases}
\end{equation}Note that $v$ is continuous, because of continuity of $u$, and moreover it is  an $\opA$-supersolution  {in $B(x_0,2R)$}. Then by Lemma~\ref{lem:mu-great} there exists $\mu_v\geq 0$, such that \[-\dv\opA(x,D v)=\mu_v\quad\text{ in }\quad B(x_0,2R).\]
Furthermore, we observe that\begin{equation}
\label{muv=muu}\mu_v(B_k)=\mu_u(B_k)\quad\text{ for }\quad  k=0,1,\dots\,.
\end{equation}
Indeed, when we take $\phi\in C_0^\infty(B_k)$ such that $\mathds{1}_{K}\leq\phi\leq \mathds{1}_{B_k}$ for some compact $K\supset \overline{\frac 32 B_{k+1}}$, we get
\[\int_{B_k}\phi\,d\mu_u-\int_{B_k}\phi\,d\mu_v=\int_{B_k}\left(\opA(x,Du)-\opA(x,Dv)\right)\cdot D\phi\,dx=0,\]
where the final equality above results from the fact that $u=v$ on the support of $D\phi$. By exhausting $B_k$ with such $K$, we get~\eqref{muv=muu}.

\bigskip

{\em Step 2. Comparison.} Let us consider $w_k\in W^{1,G}_0(\frac 43 B_{k+1})$ solving\[
-\dv\opA(x,D w_k)=\mu_v\quad\text{ in }\quad \tfrac 43 B_{k+1}\qquad\text{for }\ k=0,1,\dots\,.\]
The measure $\mu_v$ belongs to $(W^{1,G}_0(\frac 43 B_{k+1}))'$, so such $w_k$ exists for every $k$ and  \begin{equation}
    \label{muv=muwk:43B} \mu_{w_k}\Big(\tfrac 43 B_{k+1}\Big)=\mu_v \Big(\tfrac 43 B_{k+1}\Big).
\end{equation} Moreover, by Remark~\ref{rem:nonnegativeness} $w_k\geq 0$. Since $w_k$ is $\opA$-harmonic in the neighbourhood of $\partial \frac 43 B_{k+1}$, it takes continuously zero boundary value on $\partial \frac 43 B_{k+1}$. Furthermore, we observe that
\[\Big(v-\max_{\partial \frac 43 B_{k+1}}v-w_k\Big)_+\in W^{1,G}_0(\tfrac 43 B_{k+1}),\]
so it can be used as a test function in equations for $v$, as well as for $w_k$. By subtracting them we get
\[\int_{\big\{v-\max_{\partial \frac 43 B_{k+1}}v\geq w_k\big\}}\left(\opA(x,Dv)-\opA(x,Dw_k)\right)\cdot D(v-w_k)\,dx=0.\]
Consequently, by the strong monotonicity of the operator, we get that \[D(v-\max_{\partial \frac 43 B_{k+1}}v-w_k)_+=0\] and therefore \begin{equation}
v-\max_{\partial \frac 43 B_{k+1}}v< w_k\label{wk>v}\quad\text{in }\ \tfrac 43 B_{k+1}\,.
\end{equation} 

Since $\min_{\partial \frac 43 B_{k+2}}w_k{+R_k}>0$ near $\partial \frac 43 B_{k+1}$, we have
{\begin{equation}\mu_{\min\big\{w_k,\min_{\partial \frac 43 B_{k+2}}w_k{+R_k}\big\}}\Big(\tfrac 43 B_{k+1}\Big)=\mu_{w_k}\Big(\tfrac 43 B_{k+1}\Big)=\mu_v \Big(\tfrac 43 B_{k+1}\Big).\label{muwk=muv}
\end{equation}}
Indeed, let us notice that
$$
\mu_{\min\big\{w_k,\min_{\partial \frac 43 B_{k+2}}w_k{+R_k}\big\}}\Big(\tfrac 43 B_{k+1}\Big) =\sup_{K\subset \frac 43 B_{k+1}} \mu_{\min\big\{w_k,\min_{\partial \frac 43 B_{k+2}}w_k{+R_k}\big\}} (K),
$$
{where the supremum is taken over compact sets $K$.} 
Then taking any $\phi_K\in C_0^\infty(\frac 43 B_{k+1})$, such that $\mathds{1}_K\leq\phi_K\leq \mathds{1}_{\frac 43 B_{k+1}}$, we get
\begin{flalign*}
\sup_{K\subset \frac 43 B_{k+1}}  \mu&_{\min\big\{w_k,\min_{\partial \frac 43 B_{k+2}}w_k{+R_k}\big\}} (K)\\
&=\sup_{K\subset \frac 43 B_{k+1}}  \int_{\frac 43 B_{k+1}} \phi_K\, d\mu_{\min\big\{w_k,\min_{\partial \frac 43 B_{k+2}}w_k{+R_k}\big\}} \\
&= \sup_{K\subset \frac 43 B_{k+1}} \int_{\frac 43 B_{k+1}} \opA\left(x, D \Big(\min\big\{w_k,\min_{\partial \frac 43 B_{k+2}}w_k{+R_k}\big\}\Big)\right) \cdot D \phi_K\,dx\\
&= \sup_{K\subset \frac 43 B_{k+1}} \int_{\frac 43 B_{k+1}} \opA(x, D w_k)  \cdot D \phi_K\,dx\\
&= \sup_{K\subset \frac 43 B_{k+1}} \int_{\frac 43 B_{k+1}} \phi_K\, d\mu_{w_k}= \mu_{w_k} \Big( \tfrac 43 B_{k+1} \Big). \end{flalign*} 
We justify the second equality above by the fact that  $w_k\in W^{1,G}_0(\tfrac 43 B_{k+1})$, whereas the third one by recalling that $w_k$ takes zero boundary value on $\partial \tfrac 43 B_{k+1}$ also continuously. When we sum it up, to get~\eqref{muwk=muv} it only suffices to remind~\eqref{muv=muwk:43B}.

Further, by~\eqref{muwk=muv}, remembering that $R_k$ is a positive constant, and by approximating $w_k$ by an admissible test function 
we obtain that
\begin{flalign*}
\mathcal{J}_0&:=\Big(\min_{\partial \frac 43 B_{k+2}}w_k+R_k\Big)\mu_v\Big(\tfrac 43 B_{k+1}\Big)\\&=\Big(\min_{\partial \frac 43 B_{k+2}}w_k+R_k\Big)\mu_{\min\big\{w_k,\min_{\partial \frac 43 B_{k+2}}w_k{+R_k}\big\}}\Big(\tfrac 43 B_{k+1}\Big)\quad  \\
&\geq \int_{\frac 43 B_{k+1}}{\Big(\min\big\{w_k,\min_{\partial \frac 43 B_{k+2}} w_k\big\}+R_k\Big)}\,d\mu_{\min\big\{w_k,\min_{\partial \frac 43 B_{k+2}} w_k {+R_k}\big\}}\\
&\geq \int_{\frac 43 B_{k+1}}{\Big(\min\big\{w_k,\min_{\partial \frac 43 B_{k+2}} w_k\big\}\Big)}\,d\mu_{\min\big\{w_k,\min_{\partial \frac 43 B_{k+2}} w_k {+R_k}\big\}}\\
&  {=} \int_{\frac 43 B_{k+1}}\opA\Big(x,D {\Big(}{\min\big\{w_k,\min_{\partial \frac 43 B_{k+2}}w_k \big\} {\Big)}}\Big)\cdot D {\Big(}{{\min\big\{w_k,\min_{\partial \frac 43 B_{k+2}}w_k \big\}}} {\Big)}\,dx\\
& = \int_{\frac 43 B_{k+1}}\opA\Big(x,D\Big({\min\big\{w_k,\min_{\partial \frac 43 B_{k+2}}w_k\big\}}{+R_k}\Big)\Big)\cdot D{\Big({\min\big\{w_k,\min_{\partial \frac 43 B_{k+2}}w_k\big\}{+R_k}\Big)}}\,dx\\
&=:\mathcal{J}_1.\end{flalign*} 
Using growth conditions, the  Poincar\'e inequality (Proposition~\ref{prop:Poincare}), Minimum Principle (Proposition~\ref{prop:mini-princ}) for $w_k$, we get
\begin{flalign*}\mathcal{J}_1 & \geq c \int_{\frac 43 B_{k+1}}G\left(\left|D\Big({\min\big\{w_k,\min_{\partial \frac 43 B_{k+2}}w_k\big\}}+R_k\Big)\right|\right)\,dx\\
& \geq c \int_{\frac 43 B_{k+2}}G\left(\frac{\min_{\partial \frac 43 B_{k+2}}w_k+R_k}{R_k}\right)\,dx\\
&= c\, \left|\tfrac 43 B_{k+2}\right|\, G\left(\frac{\min_{\partial \frac 43 B_{k+2}}w_k+R_k}{R_k}\right)\\
&= c\, R_k^n\, G\left(\frac{\min_{\partial \frac 43 B_{k+2}}w_k+R_k}{R_k}\right)=:\mathcal{J}_2.\end{flalign*} 
Since $\mathcal{J}_0\geq \mathcal{J}_2$ and $\min_{\partial \frac 43 B_{k+2}}w_k+R_k>0$, Lemma~\ref{lem:equivalences} and  Harnack's inequality (Proposition~\ref{prop:int-sph-harn}) give
\[\mu_v\Big(\tfrac 43 B_{k+1}\Big)\geq c\, R_k^{n-1}\, g\left(\frac{\min_{\partial \frac 43 B_{k+2}}w_k+R_k}{R_k}\right)\geq c\, R_k^{n-1}\, g\left(\frac{\max_{\partial \frac 43 B_{k+2}}w_k}{R_k}\right),\]
which is equivalent to
\[\max_{\partial \frac 43 B_{k+2}}w_k\leq c\, R_k\,g^{-1}\left(\frac{\mu_v\Big(\tfrac 43 B_{k+1}\Big)}{R_k^{n-1}}\right).\]
Therefore, having~\eqref{wk>v} and~\eqref{muv=muu}, we may  conclude  that \[\max_{\partial \frac 43 B_{k+2}}v-\max_{\partial \frac 43 B_{k+1}}v\leq  c\, R_k\,g^{-1}\left(\frac{\mu_u( B_{k})}{R_k^{n-1}}\right).\]
Summing both sides from $k=2$ to infinity, we obtain
\begin{equation}
\label{v-sup}\limsup_{k\to\infty} \max_{\partial \frac 43 B_{k+2}}v\leq \max_{\partial \frac 43 B_{3}}v+c\sum_{k=2}^\infty  R_k\,g^{-1}\left(\frac{\mu_u(B_{k})}{R_k^{n-1}}\right).
\end{equation}
Since $v$ is $\opA$-harmonic and nonnegative in $\tfrac 32 B_3\setminus \overline{B_3}$,  Harnack's inequality (Proposition~\ref{prop:int-sph-harn}),  properties of Poisson's modification (Proposition~\ref{prop:Pois}),  give 
\begin{equation}
    \label{J3<J4}\mathcal{J}_3:= \max_{\partial \frac 43 B_{3}}v\leq c\Big(\min_{\partial \frac 43 B_{3}}v+R\Big)\leq c\Big(\min_{\partial \frac 43 B_{3}}u+R\Big)=c\Big(\inf_{\frac 43 B_{3}}u+R\Big)=:\mathcal{J}_4\,,
\end{equation}
where the last equality is due to Minimum Principle (Proposition~\ref{prop:mini-princ}). We observe now that $\tfrac 43 B_{3}=\tfrac 43 B(x_0,2^{1-3}R)=\tfrac 13 B(x_0,R)$. We fix  $s>0$ {that will} be chosen {later}  and by scaling argument we have 
\begin{flalign*}\inf_{\frac 43 B_{3}}u+R=\Big(\inf_{\frac 13 B(x_0,R)}u^{s}\Big)^{\frac{1}{s}} +R&\leq c\left[\left(\barint_{\frac 16 B(x_0,R)}u^{s}\,dx \right)^{\frac{1}{s}}+R\right]\\
&\leq c\left[ \left(\barint_{B(x_0,R)}u^{s}\,dx \right)^{\frac{1}{s}}+R\right] .\end{flalign*}
Therefore, taking any $s=s_0>0$ admissible in {the weak Harnack inequality from Proposition~\ref{prop:inf-est}} for nonnegative $\opA$-supersolution $u$ we can continue estimates~\eqref{J3<J4}  to get
\begin{flalign}
  \nonumber
\mathcal {J}_4&= c\Big(\inf_{\frac 43 B_{3}}u+R\Big)\leq c\left(\Big(\barint_{ B(x_0,R)}u^{s_0}\,dx\Big)^\frac{1}{s_0} +R\right)\\
&\leq c\Big(\inf_{B(x_0,R)}u+R\Big)=:\mathcal{J}_5\,.  \label{J4<J5}
\end{flalign}
{To estimate} $u(x_0)$  we use {the} lower semicontinuity of $u$, the fact that $u=v$ in $\Omega\setminus\omega$ due to~\eqref{P-u-omega}, Comparison Principle (Lemma~\ref{lem:cp}), estimate \eqref{v-sup}, and $\mathcal{J}_3\leq\mathcal{J}_{5}$ by~\eqref{J3<J4}-\eqref{J4<J5} getting
\begin{flalign*}
u(x_0)&\leq \lim_{k\to\infty} \inf_{B_k\setminus \overline{\frac 32 B_{k+1}}} u = \lim_{k\to\infty} \inf_{B_k\setminus \overline{\frac 32 B_{k+1}}} v\\
&\leq \limsup_{k\to\infty} \max_{\partial {\frac 43 B_{k+2}}} v\leq \max_{\partial \frac 43 B_{3}}v+c\sum_{k=2}^\infty  R_k\,g^{-1}\left(\frac{\mu_u( B_{k})}{R_k^{n-1}}\right)\\
&\leq c\Big(\inf_{B(x_0,R)}u+R\Big)+c\sum_{k=2}^\infty  R_k\,g^{-1}\left(\frac{2^{n-1}\mu_u( B_{k})}{R_{k-1}^{n-1}}\right)\\
&\leq c\Big(\inf_{B(x_0,R)}u+R\Big)+c\sum_{k=2}^\infty  (R_{k-1}-R_k)\,g^{-1}\left(\frac{\mu_u( B_{k})}{R_{k-1}^{n-1}}\right).
\end{flalign*}By recalling~\eqref{RkBkU} and the definition of $\cW^{\mu_u}_G$ given in~\eqref{Wolff-potential}, we get \begin{flalign*}
u(x_0)& 
\leq c\Big(\inf_{B(x_0,R)}u+R\Big)+c\int_0^R g^{-1}\left(\frac{\mu_u(B(x_0,r))}{r^{n-1}}\right)\,{dr}\\
&=c\Big(\inf_{B(x_0,R)}u+R+\cW_G^{\mu_u}(B(x_0,R))\Big)\,,
\end{flalign*}
which ends the proof of the upper bound  in~\eqref{eq:u-est}.
\end{proof}
  
\section{The Hedberg-Wolff Theorem} 
\label{sec:W-H}
Having potential estimates from Theorem~\ref{theo:est-sol}, we are in position to prove Theorem~\ref{theo:W-H}.
\begin{proof}[Proof of Theorem~\ref{theo:W-H}] We start with proving implication \eqref{potential-bounded}$\implies$\eqref{mu-in-dual} and later on show the converse. For this proof we need to study solutions to an auxiliary equation involving the special instance of the operator. Namely, we set $\opA(x,\xi)=\xi G(|\xi|)/|\xi|^2$ and thereby we consider\begin{equation}
    \label{eq:main-only-G}-\dv\Big(\frac{G(|Du|)}{|Du|^2}Du\Big)=\mu\quad \text{in }\ \Omega.
\end{equation}

\noindent \textbf{Step 1.} We assume that $\int_\Omega \mathcal {W}_G^\mu(x,R)\,d\mu(x)<\infty$ for some $R>0$ and aim at proving that $\mu\in(W^{1,G}_0(\Omega))'$. We start with $\mu$ supported in a small ball and then we use the partition of unity to cover $\supp\mu$.

 In what follows we suppose that $\supp\,\mu\subset B(x_0, R_0/5)\subset B(x_0,R_0)=:B_0\Subset\Omega$ for some $x_0\in\supp\mu$ and $R_0\in(0,R_\mathcal{W}/2)$ small enough for Theorem~\ref{theo:est-sol} to hold. Further we pick any $R<R_0/5$ and for $j=1,2,\dots$ we define\begin{equation}
    \label{Kjmuj}
K_j=\{x\in\supp\,\mu:\ \cW^\mu_G(x,R)\leq j\}\quad\text{and}\quad \mu_j=\mu\mathds{1}_{K_j}.\end{equation}
Since $\cW^\mu_G$ is lower semicontinuous, sets $K_j$ are compact. For every $j$ there exists a nonnegative $\opA$-superharmonic function $u_j$, such that  $T_k u_j\in W^{1,G}_0(\Omega)$, being a~solution to\begin{equation}
    \label{eq:muj}
-\dv\opA(x,D u_j)=\mu_j\qquad\text{in }\ B_0
\end{equation}
in the approximable sense, see Remark~\ref{rem:approx-sols-exist} for the existence and Remark~\ref{rem:nonnegativeness} for the nonnegativeness. {Our aim is now to prove local boundedness of $u_j$ in $B_0$.}

Note that 
\begin{equation}
    \label{uj-Ah}
\text{$u_j\ $ is $\ \opA$-harmonic in $\Omega\setminus\supp\mu_j$,}
\end{equation} i.e. a solution to $-\dv\opA(x,D u_j)=0$ in this set and it is continuous due to Proposition~\ref{prop:ex-Ahf}. Therefore, $u_j$ is locally bounded in $\Omega\setminus\supp\mu_j$. {In particular, $\sup_{\partial B(x_0,R_0/5)}u_j=:c_j<\infty$.}

By Theorem~\ref{theo:est-sol} we have the existence of $C_U=C_U(\data,n)>0$ such that for $R<R_0/5$
\begin{equation}
    \label{u_j-est-upper-Wolff}
u_j(x)\leq C_U\left(\inf_{B(x,R)} u_j+\cW^{\mu_j}_G(x,R)+R\right)\quad\text{for } \ x\in B(x_0, R_0/5).
\end{equation}

{As a consequence of \cite[Lemma~4.1 (i); (a) and (b)]{CiMa}, there exists a function $\zeta:[0,|\Omega|]\to\rp$, such that $\lim_{s\to 0^+}\zeta(s)=0$ and for every measurable set $E\subset B_0$ it holds that
\[\int_{E}g(u)\,dx< \zeta(|E|)\]
for `approximable solution' $u$ to~\eqref{eq:main-only-G}, see Section~\ref{ssec:sols}. Taking into account that $\mu_j=\mu\mathds{1}_{K_j}$ we infer that
\[\int_{E}g(u_j)\,dx< \zeta(|E|)\] and consequently  there exists $C>0$ independent of $j$ such that
\begin{equation}g^{-1}\left(\barint_{B_0}g(u_j)\,dx\right)< C.
\label{est-CiMa-p}
\end{equation}
Note that for $x\in B(x_0,R_0/5)$ and $R<R_0/5$, we have $B:=B(x,R)\subset B_0$. Therefore, by {the monotonicity of $g$ and} \eqref{est-CiMa-p} we have \begin{flalign}\label{infuj}
\inf_{B }u_j &=g^{-1}\Big(\inf_{ B}g(u_j)\Big) \leq c g^{-1}\left(\barint_{  B}g(u_j)\,dx \right) \\& \leq c g^{-1}  \left(\barint_{B_0}g(u_j)\,dx \right)<C.\nonumber
\end{flalign}}
Furthermore, using~\eqref{u_j-est-upper-Wolff}, \eqref{est-CiMa-p} and~\eqref{infuj}  we get for $x\in B(x_0,R_0/5)$ that
\begin{equation}
    \label{u_j-est-upper-Wolff-C}
u_j(x)\leq C_U\left(C+\cW^{\mu}_G(x,R)+R_0\right)\end{equation}
and consequently, $u_j$ is bounded in $B$ and because of~\eqref{uj-Ah} it is also locally bounded in $B_0$ and in $\Omega$. Hence, by Lemma~\ref{lem:loc-bdd-superharm-are-supersol} $u_j$ is an $\opA$-supersolution and it belongs to $ W^{1,G}_{loc}(\Omega).$ Then by Lemma~\ref{lem:mu-great} it follows that $\mu_j\in (W^{1,G}_0(\Omega))'$ and, consequently, $u_j\in W^{1,G}_0(\Omega).$ 

Since the sequence $\{\mu_j\}$ is nondecreasing (see~\eqref{Kjmuj}), we can deduce the sequence $\{u_j\}$ is nondecreasing  by Comparison Principle (Lemma~\ref{lem:cp}).  Harnack's Principle (Proposition~\ref{prop:har-princ}) ensures in particular that the pointwise limit of $\{u_j\}$ is $\opA$-superharmonic. Note that~\eqref{u_j-est-upper-Wolff-C} gives uniform bound  $\sup_{\partial B(x_0,R_0/5)} u_j<c$. Since $u_j\in W^{1,G}_0(\Omega)$, it can be used as a test function in~\eqref{eq:muj} with $\opA$ as in~\eqref{eq:main-only-G}. Thus by~\eqref{eq:muj}, \eqref{Kjmuj},  \eqref{u_j-est-upper-Wolff-C}, \eqref{uj-Ah} and finally~\eqref{potential-bounded}, we have \begin{flalign*}
\int_{B_0}G(|Du_j|)\,dx&=\int_{B_0}u_j\,d\mu_j(x)\leq \int_{B_0}u_j\,d\mu(x)\\&\leq \int_{B(x_0,R_0/5)}u_j\,d\mu(x)+ \int_{B_0\setminus \supp\mu_j}u_j\,d\mu(x)\\&\leq c \left(\int_{B(x_0,R_0/5)} \cW^{\mu}_G(x,R) \,d\mu(x)+1\right)<\infty,
\end{flalign*}
where $c=c(\data,n)>0$ does not depend on $j$. Since $G\in\Delta_2\cap\nabla_2$, Lemma~\ref{lem:doubling-norm} implies that the sequence of norms is also uniformly bounded, that is there exists $c=c(\data,n)$ such that
\begin{equation}
    \label{unif}\sup_j \| \,|Du_j|\, \|_{L^G(B_0)}=c<\infty.
\end{equation}
By testing~\eqref{eq:muj} with operator $\opA(x,\xi)=\xi G(|\xi|)/|\xi|^2$ 
against arbitrary $\eta\in C_0^\infty(\Omega)$, H\"older's inequality~\eqref{in:Hold}, Lemma~\ref{lem:equivalences}, and~\eqref{unif} we get \begin{flalign*}
\left|\int_\Omega \eta\,d\mu\right|&=\limsup_{j\to\infty}
\left|\int_{B_0} \eta\,d\mu_j\right|=\limsup_{j\to\infty}
\left|\int_{B_0} \frac{G(|Du_j|)}{|Du_j|^2}Du_j\cdot D\eta\,dx\right|\\
&\leq 2\limsup_{j\to\infty}
\left\|\frac{G(|Du_j|)}{|Du_j|}\right\|_{L^{\wt{G}}(B_0)}\| D\eta\|_{L^G(B_0)}\\
&\leq c\limsup_{j\to\infty}\left\|\,|Du_j|\,\right\|_{L^{{G}}(B_0)}\| \,|D\eta|\, \|_{L^G(B_0)}\\
&\leq c \|\eta\|_{W^{1,G}(\Omega)}.
\end{flalign*}
Thus $\mu\in (W^{1,G}_0(\Omega))'$ when $\supp\mu\subset B(x_0,R_0/5).$

In the general case, we use partition of unity to decompose
\[\mu=\sum_{i=1}^{i_0} \mu^{(i)},\qquad i_0<\infty,\]
where for every $i$ it holds $\mu^{(i)}\geq 0$ and $\supp\mu^{(i)}\subset B(x_i, R_0/5).$ Since
\[\int_\Omega\cW^{\mu^{(i)}}_G(x,R)\,d\mu^{(i)}\leq \int_\Omega\cW^{\mu}_G(x,R)\,d\mu <\infty,\]
we obtained above that $\mu^{(i)}\in (W^{1,G}_0(\Omega))'$ for every $i$. Hence, we have the final conclusion that $\mu\in (W^{1,G}_0(\Omega))'$.

\noindent \textbf{Step 2.} We assume that $\mu\in(W^{1,G}_0(\Omega))'$ and justify that then it follows $ \int_\Omega \mathcal {W}_G^\mu(x,R)\,d\mu(x)<\infty$.

Remark~\ref{rem:weak-sol} explains that for such a measure there exists a unique weak solution $u\in W^{1,G}_0(\Omega)$ to equation $-\dv\opA(x,Du)=\mu$ in $\Omega$. Therefore, we can test the solution against $u$ to get
\[\int_\Omega u\,d\mu=\int_\Omega\opA(x,Du)\cdot Du\,dx =\int_\Omega G(|Du|)\,dx<\infty.\]
Since $\cW^\mu_G$ is lower semicontinuous, it is also $\mu$-measurable. If now $R<\tfrac 12 R_\mathcal{W}$, then for fixed $x\in\supp\mu$, by lower bound from Theorem~\ref{theo:est-sol} we have that there exists $c=c(\data,n),$ such that\[\mathcal{W}^\mu_G (x , R)\leq c\big(u(x)+R\big).\]
We conclude the proof upon integrating both sides of the above estimate. 
\end{proof}{}
\section*{Acknowledgements} I. Chlebicka is supported by NCN grant no. 2019/34/E/ST1/00120. A. Zatorska-Goldstein is supported by NCN grant no. 2019/33/B/ST1/00535.

\end{document}